\numberwithin{equation}{section}
\newcommand{\CC}{\mathbb{C}}
\newcommand{\ZZ}{\mathbb{Z}}
\newcommand{\fg}{\mathfrak{g}}
\newcommand{\gl}{\mathfrak{gl}}
\newcommand{\kk}{\mathbbm{k}}
\newcommand{\T}{\mathrm{T}}
\newcommand{\Ymn}{Y_{M|N}}
\DeclareMathOperator{\Char}{char}
\DeclareMathOperator{\diag}{diag}
\DeclareMathOperator{\ev}{ev}
\DeclareMathOperator{\gr}{gr}
\newtheorem{theorem}{Theorem}[section]
\newtheorem{lemma}[theorem]{Lemma}
\newtheorem{proposition}[theorem]{Proposition}
\newtheorem{corollary}[theorem]{Corollary}
\theoremstyle{definition}
\newtheorem{definition}[theorem]{Definition}
\newtheorem{remark}[theorem]{ Remark}
\numberwithin{equation}{section}
\newcommand{\del}{\delta}
\newcommand{\Del}{\Delta}
\newcommand{\W}{\mathcal{W}}
\newcommand{\zomn}{$0^M1^N$}
\newcommand{\so}{\mathfrak{s}}
\newcommand{\mfs}{\check{\mathfrak{s}}}
\newcommand{\sd}{\mathfrak{s}^\dagger}
\newcommand{\sr}{\mathfrak{s}^r}
\newcommand{\glmn}{{\mathfrak g\mathfrak l}_{M|N}}
\newcommand{\ovl}[1]{\overline{#1}}
\numberwithin{equation}{section}
\begin{document}
\title[Parabolic presentations]{Parabolic presentations of the modular Super Yangian $\Ymn$ for arbitrary \zomn-sequences}
\author[Hongmei Hu]{Hongmei Hu}
\address[Hongmei Hu]{Department of Mathematics, Shanghai Maritime University, Shanghai 201306, China.}
\email{hmhu@shmtu.edu.cn}
\date{\today}

\makeatletter
\makeatletter
\@namedef{subjclassname@2020}{%
  \textup{2020} Mathematics Subject Classification}
\makeatother
\makeatother
\subjclass[2020]{17B37, 17B50}
\begin{abstract}
Let $\mu$ be an arbitrary composition of $M+N$ and let $\so$ be an arbitrary \zomn-sequence.
The present paper is devoted to extend parabolic presentations, depending on $\mu$ and $\so$, 
of the super Yangian $\Ymn$  to a field of positive characteristic.
\end{abstract}
\maketitle
\setcounter{tocdepth}{2}
\tableofcontents

\bigskip
\section{Introduction}
The Yangians, defined by Drinfeld \cite{Dr1, Dr2}, are certain non-commutative Hopf algebras which provides fundamental and important examples of quantum groups.
Also,
taking adventage of rational solutions of the {\em Yang-Baxter equation}, 
there is another way to present Yangians by the RTT formalism  in \cite[ch.11]{ES98} or \cite{FRT90},
which opens the way to their applications in statistical mechanics and mathematical physics.
Nowadays, the study of Yangians gives many new points of view and important applications to classical Lie theory; see the book \cite{Mo} and references therein.

In characteristic zero,
the Yangian $Y_N=Y(\mathfrak{gl}_N)$ of the Lie
algebra $\mathfrak{gl}_N$ was earlier considered in the works of mathematical physicists from
St.-Petersburg; see for instance \cite{TF79}.
The super Yangian $\Ymn$ associated to the general linear Lie superalgebra $\glmn$ is defined by Nazarov \cite{Na} in terms of the RTT presentation.
The {\it Drinfeld-type presentation} of $\Ymn$ was found by Gow in \cite{Gow07}.
Brundan and Kleshchev \cite{BK1} established a new presentation,
called as {\it parabolic presentations}, for the Yangian $Y_n$ associated to each composition $\mu$ of $n$. 
Then Peng Y. \cite{Peng16} built the generalization of \cite{BK1} for arbitrary $\mu$ and $01$-sequence to the super analogue of $Y_{n}$ over $\CC$,
the super Yangian $\Ymn$ associated to the general linear Lie superalgebra $\glmn$.

The main goal of this article is to extend the ones in \cite{Peng16} to an algebraically closed field $\kk$ of positive characteristic.
It aims to obtain {\it parabolic presentations} of the modular super Yangian $\Ymn$ for an {\em arbitrary} fixed 01-sequence $\so$ of $\glmn$ and an {\em arbitrary} fixed composition $\mu$ of $M+N$.
Then the fact that the super phenomenon could happen everywhere since $\mu$ and $\so$ are both arbitrary,
still exists in our setting.
The generators are commute over $\mathbb{C}$ if they are from two different blocks and the blocks are not ``close",
then enough relations for the general case are still obtained, in terms of the homomorphisms $\psi_{L}$ and $\zeta_{M|N}$, from the ones in  the less complicated situations that the length of $\mu$ is $2,3,4$.
Fortunately,
it remains to be true in the positive characteristic field $\kk$.
The author hope that the present paper  provides a self-contained reference when one is interested in parabolic presentations of super Yangians over a positive characteristic field, 
then all detailed proofs are offered here,
which, therefore,
confirm the statement that all most relations in the field $\kk$ are same with the ones in \cite{Peng16}.

Differenting from the proofs in \cite{Peng16},
we prove all relations related to parabolic generators $F$'s,
and find that the signs related to $F$'s arsing from the $\mathbb{Z}_2$-grading are more complicated than the ones related to $E$'s in \cite{Peng16},
so one needs more extra care here when treating the sign issues.
Also, we are in characteristic $p:=\Char{\kk}>0$,
and then the behaviour of these algebras will be different to the ordinary setting,
such as,
the argument that the identity $a=-a$ produces $a=0$ used in \cite{Peng16} will be false here.  
Then some identities in section \ref{sec:general} are the same as the ones in \cite{Peng16},
but we need to prove them again in different ways.
Extra care also are much more involved when treating the issues arising from sign factors in the field $\kk$.

Compared to these details proofs,
if ones just concern the parabolic realization of the modular super Yangian $Y_{M|N}$ for arbitrary $\mu$ and $\so$,
and their differences to the ones in the complex field setting,
then Theorem \ref{parabolic} is enough.
Actually, 
we have more relations in the positive characteristic field $\kk$ than \cite[Theorem 7.2]{Peng16}.
The main points are as follows.
\begin{enumerate}
\item If $p\neq 2$, the quartic Serre relations
for $|g|_3+|f|_2=0$ in parabolic presentations already follow from the quadratic and cubic relations,
however we cannot do that in the case $p=2$. 
Then we need here all  $F_{a+1;g,f}^{(1)}$ in the {\it quartic Serre relations} \eqref{coeffi-superserre-E} and \eqref{coeffi-superserre-F},
which is different from \cite[Theorem 7.2]{Peng16}.
Note that it is enough to consider the case $f_2=h, g_1=j$ in the relations (7.15)-(7.16) in \cite[Theorem 7.2]{Peng16}.
\item The relations (7.13)-(7.14) of {\it loc. cit.} are expressed here as the four relations \eqref{coeffi-serre-E}-\eqref{coeffi-super-F}.
	This is essential in characteristic $2$.
\item We add the case $a=c$ here in the relation \eqref{inj-4}, 
and different ways to prove the relations  \eqref{inj-1}--\eqref{inj-4}(\,cf.\,\cite[Lemma 7.4,(a)--(d)]{Peng16}) are provided here.
\end{enumerate}

A concrete realization of finite $W$-(super)algebras associated to {\em any} nilpotent element  in terms of (super)Yangians in \cite{BK2,Peng21}, 
demonstrate completely the importance of parabolic presentations.
The connection between Yangians and finite $W$-algebras was observed earlier in \cite{RS} for some particular nilpotent elements (called {\em rectangular} elements) with a different approach. Moreover, such a realization of finite $W$-(super)algebras provides a way to study their representation theory  by the representation theory of Yangians; see \cite{BK3}.
But Premet's work on the Kac-Weisfeiler
conjecture \cite{Premet} tell us that his motivation for finite $W$-algebras came from the representation
theory of Lie algebras in positive characteristic.
Recently, the precise analog of finite W-algebras in positive characteristic was introduced in \cite{GT19},
and their relations with Yangian were studied in \cite{GT21}.
Based on these, 
we mention one undergoing application of our result, 
which can also be thought as the true motivation.
One may try to build up the super version of \cite{GT21},
and generalize the argument in \cite{Peng21} to a positive characteristic field,
 so that a realization of (restricted) modular finite  $W$-superalgebras of type A in terms of the super Yangian $Y_{M|N}$ can be obtained. 

This article is organized as follows. We recall some basic properties of modular super Yangian $Y_{M|N}$ in Section 2. 
In Section 3, we take advantage of Gauss decomposition to define the parabolic generators. 
In Section 4, we  generalize some important homomorphisms between super Yangians to the positive characteristic field $\kk$ and prove them in detail.
It makes us reduce the general case to some less complicated special cases studied in Section 5 and 6. 
Section 7 is devoted to formulate and prove the main theorem.
\bigskip

\emph{Notation.}Throughout the article, 
$\kk$ denotes an algebraically closed field of characteristic $\Char(\kk)=:p>0$,
and we work over the ground field $\kk$.
We write $\glmn$ consisting of all $(M+N)\times (M+N)$ matrices over $\kk$ as the general linear Lie superalgebra.
$[a,b]$ means the set $\{a,a+1,\cdots,b\}, a,b\in\ZZ_{\geq 0}$,
and $e_{i,j}$ the elementary $ij$-matrix.
\section{Generalities on the Modular super Yangian $\Ymn$}
\subsection{$01$-sequence}
As is well known,
the supertrace on the Lie superalgebra $\glmn$  with the parity $|i|$ of $i$ is $0$ if $i\in[1,M]$, and $|i|=1$ if $i\in[M+1,M+N]$, 
gives rise to a non-degenerate supersymmetric bilinear form.
Restricting to the Cartan subalgebra $\mathfrak{h}$ of diagonal matrices,
we obtain a non-degenerate symmetric bilinear form on $\mathfrak{h}$.
Then the dual basis of $\{e_{k,k}, k\in[1,M+N]\}$ is
$\{\delta_{i}, \epsilon_{j}|i\in[1,M], j\in[M+1, M+N]\}$ when we identify $\delta_{i}$ with $(e_{ii},~)$, 
and $\epsilon_{j}$ with $-(e_{jj},~)$,
respectively. 
The  {\it standard positive system} of the root system corresponding to the {\it standard Borel subalgebra} of $\glmn$ consisting of upper triangular matrices, 
is given by $\{\epsilon_{i}-\epsilon_{j}|j\in [1,M+N], i<j\}$ with keeping in mind $\epsilon_{i}=\delta_{i}$ for $i\in[1,M]$,
and then its {\it standard fundamental system} is
\begin{equation}\label{fund}
\Pi^{st}=	\{\delta_{i}-\delta_{i+1},\delta_{M}-\epsilon_{1}, \epsilon_{j}-\epsilon_{j+1}|~i\in[1,M-1],~ j\in[M+1,M+N-1]\}.
\end{equation}
With this standard choice, $\delta_{M}-\epsilon_{1}$ is only one odd simple root.

It follows from the well-known classification for $\mathfrak{gl}_{M+N}$ 
that a fundamental system 
\begin{equation}\label{seq}
	\{\epsilon_{i_1}-\epsilon_{i_2},\epsilon_{i_2}-\epsilon_{i_3},\cdots,
\epsilon_{i_{M+N-1}}-\epsilon_{i_{M+N}}\},
\end{equation}
where $\{i_1, i_2, \cdots,i_{M+N} \}=[1,M+N]$ of $\glmn$ is obtained by restoring the parity of the simple roots.
 There is an important fact \cite[Section 1.3]{CW}) that all $\epsilon\delta$-sequence \eqref{seq} (also called as $0^m1^n$ or $01$-sequence),
which is a sequence consisting of $m$ $\delta$'s and $n$ $\epsilon$'s,
to  parameterize the set of the conjugacy classes of simple systems of $\glmn$ under the Weyl group $S_M\times S_N$ action. 
For the standard choice \eqref{fund},
its corresponding $01$-sequence is
\begin{equation}\label{standard}\so^{st}=\stackrel{M}{\overbrace{0\ldots0}}\,\stackrel{N}{\overbrace{1\ldots1}}.\end{equation}

We refer the reader to \cite[Chapter 1.]{CW} for more details and further applications of 01-sequences.

\subsection{Restricted Lie superalgebra $\glmn[x]$}
Let $\glmn[x]$ be the {\it current superalgebra} $\glmn\otimes\kk[x]$ with the standard basis 
$\{e_{i,j}x^r:=e_{i,j}\otimes x^r|r\in\ZZ_{\geq 0}, i,j\in[1,M+N]\}$,
and $\deg e_{i,j}x^r=|i|+|j|$. 
The supercommutation relations of $\glmn[x]$ are:
\begin{align*}\label{Lie bracket of g}
	[e_{i,j}x^r,e_{k,l}x^s]=\delta_{k,j}e_{i,l}x^{r+s}-(-1)^{(|i|+|j|)(|k|+|l)|}\delta_{l,i}e_{k,j}x^{r+s},\, \forall~ r,s\geq 0.
\end{align*}
We will always denote the Lie algebra by $\fg$ and write $U(\fg)$ for its enveloping algebra.
$U(\fg)$ has the natural filtration and grading with ${\rm deg}e_{i,j}x^r=r$.

The current superalgebra $\fg$ is a restricted Lie superalgebra with $p$-map defined on the basis by the rule $(ax^r)^{[p]}:=a^{[p]}x^{rp}$,
where $a^{[p]}$ denotes the $p\mathrm{th}$ matrix power of $a$,
here, 
$a$ belongs to the even part $\mathfrak{gl}_M\oplus\mathfrak{gl}_N$ of $\glmn$.

More details and results for the restricted Lie superalgebra $\glmn[x]$
can be found in \cite{CH}.

\subsection{Modular super Yangian $\Ymn$}\label{modular superYangians}
Corresponding to the case when $(01)$-sequence is the standard one $\so^{st}$  \eqref{standard},
then the original one in \cite{Na} gave rise to the definition \cite[Section 3 ]{CH} of the modular super Yangian $\Ymn$ over $\kk$ associated to the general linear Lie superalgebra $\mathfrak{gl}_{M|N}$.
There also is a {\it PBW theorem} for the modular super Yangian $\Ymn$ related to	 $\so^{st}$.
Generally,
we have the following 
\begin{definition}
For any fixed \zomn-sequence $\so$,
the modular super Yangian associated to the general linear Lie superalgebra $\mathfrak{gl}_{M|N}$,
denoted by $\Ymn$ hereafter,
is the associated superalgebra over $\kk$ with the {\it RTT generators} $\{t_{i,j}^{(r)};~1{\leq}i,j{\leq}M+N, r\geq 1\}$ subject to the following relations:
\begin{equation}\label{RTT relations}
	\left[t_{i,j}^{(r)}, t_{k,l}^{(s)}\right] =(-1)^{|i||j|+|i||k|+|j||k|}\sum_{t=0}^{\min(r,s)-1}
	\left(t_{k, j}^{(t)} t_{i,l}^{(r+s-1-t)}-
	t_{k,j}^{(r+s-1-t)}t_{i,l}^{(t)}\right),
\end{equation}
where $|i|$ denote the $i$-th digit of $\so$,
and the parity of $t_{i,j}^{(r)}$ for $r>0$ is defined by $|i|+|j|~(\text{mod}~2)$,
and the bracket in \eqref{RTT relations} is the supercommutator.
The element $t_{i,j}^{(r)}$ is called an {\it even} ({\it odd}, respectively) element if its parity is $0$ ($1$, respectively).
By convention, we set $t_{i,j}^{(0)}:=\delta_{i,j}$.
\end{definition}
It is easy to obtain that the definition of $\Ymn$ is is independent of the choices of $\so$, up to an isomorphism,
so we can omit it in the notation.

Define the $(M+N)\times (M+N)$ matrix $T(u)=:\Big( t_{i,j}(u) \Big), 1{\leq}i,j{\leq}M+N$ with entries in $\Ymn[[u^{-1}]]$,
where $t_{i,j}(u)$ is the formal power series 
\[
t_{i,j}(u):= \sum_{r\geq 0} t_{i,j}^{(r)}u^{-r} \in Y_{M|N}[[u^{-1}]],\,\forall \,i,j\in[1,M+N].
\]
Then we may rewrite equation (\ref{RTT relations}) into the following series form
\begin{align}\label{tiju tkl relation}
(u-v)[t_{i,j}(u),t_{k,l}(v)]=(-1)^{|i||j|+|i||k|+|j||k|}(t_{k,j}(u)t_{i,l}(v)-t_{k,j}(v)t_{i,l}(u)).
\end{align}

The matrix $T(u)$ is always invertible, 
so we define the entries of its inverse as
\begin{equation}\label{Tp=FDE}
	\big(T(u)\big)^{-1}:=\big(t_{ij}^{\prime}(u)\big)_{i,j=1}^{M+N}.
\end{equation}

Then a straightforward calculation yield the following relation, which will be used frequently later (see \cite[(5)]{Gow07}, \cite[(3.12)]{Peng16}).
\begin{align}\label{commurelation}
	(u-v)[t_{i,j}(u),t'_{k,l}(v)]{=}(-1)^{|i||j|+|i||k|+|j||k|}(\delta_{k,j}\sum\limits_{s=1}^{m{+}n}t_{i,s}(u)t'_{s,l}(v){-}\delta_{i,l}\sum\limits_{s=1}^{m{+}n}t'_{k,s}(v)t_{s,j}(u)).
\end{align}

The modular super Yangian $\Ymn$ is a Hopf-superalgebra observed in \cite[p.125]{Na},
with the comultiplication 
$\Del:\Ymn\rightarrow \Ymn\otimes \Ymn$ is given by 
\begin{equation}\label{Del}
	\Del(t_{i,j}^{(r)})=\sum_{s=0}^r \sum_{k=1}^{M+N} t_{i,k}^{(r-s)}\otimes t_{k,j}^{(s)},\, \forall i,j\in[1,M+N].
\end{equation}
Moreover, there exists a surjective homomorphism $$\ev:\Ymn\rightarrow U(\glmn)$$ called the {\em evaluation homomorphism}, defined by
\begin{equation}\label{ev}
	\ev\big(t_{i,j}(u)\big):= \del_{ij} + (-1)^{|i|} e_{ij}u^{-1},
\end{equation}

Also,
there is {\it PBW theorem} \cite[section 3]{CH} for the modular super Yangian $\Ymn$ works perfectly for a fixed $01$-sequence $\so$.
\begin{proposition}
Ordered supermonomial in the generators $\{t_{i,j}^{(r)};~1{\leq}i,j{\leq}M+N, r\geq 1\}$ taken in some fixed order forms a linear basis for $\Ymn$.
\end{proposition}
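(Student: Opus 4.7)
The natural plan is to adapt the loop-filtration PBW argument of \cite{CH}, which was carried out there for the standard sequence $\so^{st}$, to an arbitrary $01$-sequence $\so$. The key observation is that the defining RTT relations \eqref{RTT relations} have exactly the same form for every $\so$, differing only through the parity assignments $|i|$; hence every structural ingredient of the standard proof transfers, and the only real work lies in tracking sign factors consistently with the parity prescribed by $\so$.

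First, equip $\Ymn$ with the loop filtration $\deg t_{i,j}^{(r)} = r-1$. Reading off \eqref{RTT relations}, each summand $t_{k,j}^{(t)} t_{i,l}^{(r+s-1-t)}$ on the right-hand side has filtration degree $(t-1)+(r+s-2-t)=r+s-3$, strictly below the maximum $r+s-2$ available to $[t_{i,j}^{(r)}, t_{k,l}^{(s)}]$. Consequently $\gr \Ymn$ is supercommutative for the parity $|i|+|j|$ read from $\so$. Define a superalgebra surjection $\phi\colon S(\glmn[x]) \twoheadrightarrow \gr \Ymn$ from the supersymmetric algebra of the current Lie superalgebra, by $e_{i,j}\,x^{r-1} \mapsto \bar t_{i,j}^{(r)}$; well-definedness and surjectivity are immediate from the filtration estimate above. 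The ordered-supermonomial basis claim is then equivalent to $\phi$ being an isomorphism, since by supercommutativity of $\gr \Ymn$ any monomial in the $t_{i,j}^{(r)}$ can be reordered modulo strictly lower filtration degree, and the injectivity of $\phi$ pins down linear independence of ordered supermonomials through the natural basis of $S(\glmn[x])$.

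The main obstacle is injectivity of $\phi$ in characteristic $p>0$, where the characteristic-zero device of embedding $\Ymn$ into a completed loop enveloping algebra is unavailable. Following the strategy of \cite{CH} for $\so^{st}$, I would introduce a $\ZZ$-form $\Ymn^{\ZZ}$ generated by the $t_{i,j}^{(r)}$, verify that ordered supermonomials span $\Ymn^{\ZZ}$ by using \eqref{RTT relations} as rewriting rules with induction on filtration degree, and then deduce linear independence by base-changing to $\CC$ and invoking the characteristic-zero PBW theorem for the complex super Yangian $Y_{M|N}$ available from Nazarov \cite{Na} and Gow \cite{Gow07}. Base-changing back to $\kk$ yields the statement over the modular ground field.

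Because the integer coefficients appearing in \eqref{RTT relations} are insensitive to $\so$ and the parity enters only through sign factors that descend to the $\ZZ$-form, the specialization argument works uniformly for every $01$-sequence. Alternatively, one can bypass the $\ZZ$-form by exploiting the algebra isomorphism $\Ymn^{\so}\cong \Ymn^{\so^{st}}$ already noted in the text and transporting the known PBW basis of the standard presentation across it, with a compatible sign twist that matches the parity of each generator $t_{i,j}^{(r)}$ under $\so$ to that of its image under $\so^{st}$. Either route terminates with the desired ordered-supermonomial basis, completing the proposition.
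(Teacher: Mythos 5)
There is a concrete error at the foundation of your first (and main) route: for the loop filtration $\deg t_{i,j}^{(r)}=r-1$ the associated graded algebra of $\Ymn$ is \emph{not} supercommutative. Your degree count $(t-1)+(r+s-2-t)=r+s-3$ for the summands on the right of \eqref{RTT relations} breaks down at $t=0$: by the convention $t_{k,j}^{(0)}=\delta_{k,j}$ that summand is the scalar multiple $\delta_{k,j}\,t_{i,l}^{(r+s-1)}-\delta_{i,l}\,t_{k,j}^{(r+s-1)}$, which has filtration degree $r+s-2$, i.e.\ it survives in top degree. Hence $\gr[t_{i,j}^{(r)},t_{k,l}^{(s)}]$ is exactly the current-superalgebra bracket and $\gr\Ymn\cong U(\glmn[x])$ --- this is precisely \eqref{grading-iso} in the text --- not $S(\glmn[x])$. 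Your map $\phi$ out of the supersymmetric algebra is therefore not a well-defined algebra homomorphism, and supercommutativity cannot be invoked to reorder monomials. The slip is a conflation of two standard filtrations: with $\deg t_{i,j}^{(r)}=r$ every right-hand summand does drop degree, $\gr$ \emph{is} supercommutative and the target is $S(\glmn[x])$; with $\deg t_{i,j}^{(r)}=r-1$ the target must be $U(\glmn[x])$. Either choice can be made to work (ordered supermonomials span $U(\fg)$ as well, by PBW for the Lie superalgebra $\fg$ over $\kk$), but as written the key intermediate claim is false and the ``well-definedness is immediate'' step fails.

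On the genuinely hard point --- injectivity of $U(\fg)\tha\gr\Ymn$ (equivalently, linear independence of the ordered supermonomials) in characteristic $p$ --- the paper itself offers no argument: it simply quotes the PBW theorem of \cite{CH} for $\so^{st}$ and uses the observation that $\Ymn(\so)\cong\Ymn(\so^{st})$, which is exactly your alternative route. That route is the cleaner one here: the isomorphism between the presentations for two $01$-sequences permutes indices and rescales generators by signs, hence carries ordered supermonomials to ordered supermonomials, and the basis transports with no further work. Your $\ZZ$-form sketch, by contrast, is not yet a proof: asserting that one ``base-changes to $\CC$ and back to $\kk$'' presupposes that the $\ZZ$-form is a free $\ZZ$-module on the ordered supermonomials, which is the whole content of the statement rather than a consequence of spanning plus independence over $\CC$ (spanning over $\ZZ$ plus independence over $\CC$ does give freeness, but the integral spanning step via the RTT rewriting rules is exactly what needs to be carried out and is not addressed). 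I would either complete that step explicitly or simply adopt the transport argument, which is what the paper implicitly relies on.
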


Moreover,
the {\it loop filtration}
\begin{equation}\label{loop filtration}
	\Ymn=\bigcup_{r\geq 0}{\rm F}_r\Ymn
\end{equation}
by setting ${\rm deg}t_{i,j}^{(r)}=r-1$,
i.e., ${\rm F}_r\Ymn$ is the span of all supermonomials of the form $t_{i_1,j_1}^{(r_1)}\dots t_{i_m,j_m}^{(r_m)}$ with $(r_1-1)+\cdots+(r_m-1){\leq}r$,
to make sure that the assignment
\begin{equation}\label{grading-iso}{\rm gr}_{r-1}t_{i,j}^{(r)}\mapsto (-1)^{|i|}e_{i,j}x^{r-1}\end{equation}
gives  an isomorphism ${\rm gr} \Ymn\cong U(\fg)$ of graded superalgebras.

\section{Parabolic generators in positive characteristic}\label{section: parabolic}
Basically,
the techniques in \cite[section3]{Peng16} work perfectly in any positive characteristic,
but to make this article self-contained,
we will spend some time explaining notation precisely.

Let $\mu=(\mu_1,\ldots,\mu_n)$ be a given composition of $M+N$ with length $n$,
$\so$ a fixed \zomn-sequence. 
We break $\so=\so_1\so_2\cdots\so_n$ into $n$ subsequences according to $\mu$,
where $\so_1$ is the subsequence consisting of the first $\mu_1$ digits of $\so$, $\so_2$ is the subsequence consisting of the next $\mu_2$ digits of $\so$, and so on.

For each $a\in[1,n]$, let $p_a$ and $q_a$ denote the number of $0$'s and $1$'s in $\so_a$, respectively.
Then each $\so_a$ is a $0^{p_a}1^{q_a}$-sequence of $\gl_{p_a|q_a}$,
and for each $i{\leq}[1,\mu_a]$, define the {\em restricted parity} $|i|_a$ by
\begin{equation}\label{respa} 
|i|_a:=\mbox{ the~} i-\mbox{th~ digits~ of~} \so_a=|\sum\limits_{j=1}^{a-1}\mu_j+i|. 
\end{equation}

Suppose that $A, B, C$ and $D$ are $a \times a$, $a \times b$, $b \times a$ and $b \times b$ matrices respectively with entries in some ring.
Assuming that the matrix $A$ is invertible, we have the following definition
by {\it quasideterminants} in \cite{GR} and the notation in \cite[(4.3)]{BK1}.
\begin{equation*}
	\left|
	\begin{array}{cc}
		A&B\\
		C&
		\hbox{\begin{tabular}{|c|}\hline$D$\\\hline\end{tabular}}
	\end{array}
	\right| := D - C A^{-1} B.
\end{equation*}

For any fixed $\so$, the leading minors of the matrix $T(u)$ are invertible, It follows that it possesses the $Gauss$ $decomposition$ \cite{GR} associated to $\mu$:
\begin{equation}\label{gaussdec}
	T(u) = F(u) D(u) E(u)
\end{equation}
for unique {\em block matrices} $D(u)=\diag(D_1(u),\cdots,D_n(u))$, and $E(u),F(u)$ of the form
\begin{gather*}
E(u) =
\left(
\begin{array}{cccc}
	I_{\mu_1} & E_{1,2}(u) &\cdots&E_{1,n}(u)\\
	0 & I_{\mu_2} &\cdots&E_{2,n}(u)\\
	\vdots&\vdots&\ddots&\vdots\\
	0&0 &\cdots&I_{\mu_{n}}
\end{array}
\right),\quad
F(u) = \left(
\begin{array}{cccc}
	I_{\mu_1} & 0 &\cdots&0\\
	F_{2,1}(u) & I_{\mu_2} &\cdots&0\\
	\vdots&\vdots&\ddots&\vdots\\
	F_{n,1}(u)&F_{n,2}(u) &\cdots&I_{\mu_{n}}
\end{array}
\right),
\end{gather*}
where
\begin{gather*}
	D_a(u) =\big(D_{a;i,j}(u)\big)_{1 {\leq}i,j {\leq}\mu_a},\\
	E_{a,b}(u)=\big(E_{a,b;i,j}(u)\big)_{1 {\leq}i {\leq}\mu_a, 1 {\leq}j {\leq}\mu_b},\quad
	F_{b,a}(u)=\big(F_{b,a;i,j}(u)\big)_{1 {\leq}i {\leq}\mu_b, 1 {\leq}j {\leq}\mu_a},
\end{gather*}
are $\mu_a \times \mu_a$,
$\mu_a \times \mu_b$
and  $\mu_b \times\mu_a$ matrices, respectively.
Also note that all the submatrices $D_{a}(u)$'s are invertible, and we define the $\mu_a\times\mu_a$ matrix
$D_a^{\prime}(u)=\big(D_{a;i,j}^{\prime}(u)\big)_{1{\leq}i,j{\leq}\mu_a}$ by
$
	D_a^{\prime}(u):=\big(D_a(u)\big)^{-1}.
$
The entries of these matrices are certain power series
\begin{eqnarray*}
&D_{a;i,j}(u) =& \sum_{r \geq 0} D_{a;i,j}^{(r)} u^{-r},\quad
	D_{a;i,j}^{\prime}(u) = \sum_{r \geq 0}D^{\prime(r)}_{a;i,j} u^{-r},\\
	&E_{a,b;i,j}(u) =& \sum_{r \geq 1} E_{a,b;i,j}^{(r)} u^{-r},\quad
	F_{b,a;i,j}(u) =\sum_{r \geq 1} F_{b,a;i,j}^{(r)} u^{-r}.
\end{eqnarray*}
In addition, for $1{\leq}a{\leq}n-1$, we set
\begin{eqnarray*}
	E_{a;i,j}(u) := E_{a,a+1;i,j}(u)=\sum_{r \geq 1}  E_{a;i,j}^{(r)} u^{-r},\quad
	F_{a;i,j}(u) := F_{a+1,a;i,j}(u)=\sum_{r \geq 1} F_{a;i,j}^{(r)} u^{-r}.
\end{eqnarray*}

Moreover,
when we write the matrix $T(u)$ in the block form according to $\mu$ as 
$$T(u) = \left(
\begin{array}{lll}
	{^\mu}T_{1,1}(u)&\cdots&{^\mu}T_{1,n}(u)\\
	\vdots&\ddots&\cdots\\
	{^\mu}T_{n,1}(u)&\cdots&{^\mu}T_{n,n}(u)\\
\end{array}
\right),
$$ 
where each ${^\mu}T_{a,b}(u)$ is a $\mu_a \times \mu_b$ matrix.
Then we have 
\begin{align}\label{quasid}
	&D_a(u) =
	\left|
	\begin{array}{cccc}
		{^\mu}T_{1,1}(u) & \cdots & {^\mu}T_{1,a-1}(u)&{^\mu}T_{1,a}(u)\\
		\vdots & \ddots &\vdots&\vdots\\
		{^\mu}T_{a-1,1}(u)&\cdots&{^\mu}T_{a-1,a-1}(u)&{^\mu}T_{a-1,a}(u)\\
		{^\mu}T_{a,1}(u) & \cdots & {^\mu}T_{a,a-1}(u)&
		\hbox{\begin{tabular}{|c|}\hline${^\mu}T_{a,a}(u)$\\\hline\end{tabular}}
	\end{array}
	\right|,\\[4mm]
	&E_{a,b}(u) =\label{quasie}
	D^{\prime}_a(u)
	\left|\begin{array}{cccc}
		{^\mu}T_{1,1}(u) & \cdots &{^\mu}T_{1,a-1}(u)& {^\mu}T_{1,b}(u)\\
		\vdots & \ddots &\vdots&\vdots\\
		{^\mu}T_{a-1,1}(u) & \cdots & {^\mu}T_{a-1,a-1}(u)&{^\mu}T_{a-1,b}(u)\\
		{^\mu}T_{a,1}(u) & \cdots & {^\mu}T_{a,a-1}(u)&
		\hbox{\begin{tabular}{|c|}\hline${^\mu}T_{a,b}(u)$\\\hline\end{tabular}}
	\end{array}
	\right|,\\[4mm]
	&F_{b,a}(u) =\label{quasif}
	\left|
	\begin{array}{cccc}
		{^\mu}T_{1,1}(u) & \cdots &{^\mu}T_{1,a-1}(u)& {^\mu}T_{1,a}(u)\\
		\vdots & \ddots &\vdots&\vdots\\
		{^\mu}T_{a-1,1}(u) & \cdots & {^\mu}T_{a-1,a-1}(u)&{^\mu}T_{a-1,a}(u)\\
		{^\mu}T_{b,1}(u) & \cdots & {^\mu}T_{b,a-1}(u)&
		\hbox{\begin{tabular}{|c|}\hline${^\mu}T_{b,a}(u)$\\\hline\end{tabular}}
	\end{array}
	\right|D^{\prime}_a(u),
\end{align}
for all $1{\leq}a{\leq}n$ in (\ref{quasid}) and $1{\leq}a<b{\leq}n$ in (\ref{quasie}), (\ref{quasif}).

Let $T_{a,b;i,j}(u)$ be the $(i,j)$-th entry of the $\mu_a\times\mu_b$ matrix $^\mu T_{a,b}(u)$ and let $T_{a,b;i,j}^{(r)}$ denote the coefficient of $u^{-r}$ in $T_{a,b;i,j}(u)$.
Acoording to the special case of when $a=1$ in \eqref{quasid},
and \eqref{quasie}, \eqref{quasif},
we have 
\begin{gather}
	D_{1;i,j}^{(r)}=T_{1,1;i,j}^{(r)}=t_{i,j}^{(r)},\quad \ 1{\leq}i,j{\leq}\mu_1, r\geq 0;\label{Dtident}\\
	E_{b-1;i,j}^{(1)}{=}T_{b-1,b;i,j}^{(1)}, \, F_{b-1;j,i}^{(1)}{=}T_{b,b-1;j,i}^{(1)},\quad 2{\leq}b{\leq}n, 1{\leq}i{\leq}\mu_{b-1}, 1{\leq}j{\leq}\mu_{b}.\label{Jacobi1}
\end{gather}

\begin{lemma}
	For each pair $a$, $b$ such that $1<a+1<b{\leq}n-1$ and $1{\leq}i\leq\mu_a$, $1 {\leq}j {\leq}\mu_b$, we have
	\begin{equation}
		E_{a,b;i,j}^{(r)}{=}(-1)^{|k|_{b-1}}[E_{a,b-1;i,k}^{(r)}, E_{b-1;k,j}^{(1)}],
		\quad
		F_{b,a;j,i}^{(r)}{=}(-1)^{|k|_{b-1}}[F_{b-1;j,k}^{(1)}, F_{b-1,a;k,i}^{(r)}],\label{ter}
	\end{equation} 
	for any fixed $1 {\leq}k {\leq}\mu_{b-1}$. 
\end{lemma}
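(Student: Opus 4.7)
The plan is to induct on the gap $d := b - a \geq 2$, proving the two assertions in parallel. For the base case $d = 2$, so $b = a+2$, the factors on the right-hand side simplify: $E_{a,b-1;i,k}^{(r)} = E_{a;i,k}^{(r)}$ and, by \eqref{Jacobi1}, $E_{b-1;k,j}^{(1)} = T_{a+1,a+2;k,j}^{(1)}$ is a single level-one RTT generator; similarly on the $F$-side. I would then expand the supercommutator $(-1)^{|k|_{a+1}}[E_{a;i,k}^{(r)}, T_{a+1,a+2;k,j}^{(1)}]$ by extracting the coefficient of $u^{-r}v^{-1}$ in $(u-v)[E_{a;i,k}(u), T_{a+1,a+2;k,j}(v)]$ and applying the series-form RTT identity \eqref{commurelation}. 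The Kronecker deltas there collapse the sums to essentially one surviving block-matrix product, which, after absorbing the Gauss factor $D_{a+1}^{\prime}(u)$ and the parity $(-1)^{|k|_{a+1}}$, reassembles precisely into the quasideterminantal expression \eqref{quasie} (resp.\ \eqref{quasif}) for $E_{a,a+2;i,j}^{(r)}$ (resp.\ $F_{a+2,a;j,i}^{(r)}$).

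For the inductive step, assume the identity holds for all gaps smaller than $d$. I would first verify that the right-hand side of \eqref{ter} is independent of the choice of $k \in [1,\mu_{b-1}]$: the difference between two choices $k, k'$ can be expressed as a further commutator with $E_{b-1;k,k'}^{(1)}$, and this is forced to vanish by relations derived from \eqref{commurelation} applied to indices lying in the single block $b-1$. Having this $k$-independence, one may then choose $k$ adaptively and, by plugging in the inductive hypothesis for $E_{a,b-1;i,k}^{(r)}$, match the nested supercommutator with the direct quasideterminantal expansion of $E_{a,b;i,j}(u)$ coming from \eqref{quasie}. The $F$-version is entirely parallel, the only structural difference being that the Gauss factor $D_{b-1}^{\prime}(u)$ stands on the \emph{right} in \eqref{quasif}, which reverses the order in which the parity contributions accumulate when supercommutators distribute across products.

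The principal obstacle will be the sign bookkeeping, especially in the $F$-case. The RTT coefficient $(-1)^{|i||j|+|i||k|+|j||k|}$ from \eqref{commurelation} combines with the intrinsic parity $|k|_{b-1}$ of the intermediate index and with the parities of $F$-entries (which mix digits from two different blocks of $\so$) in a non-trivially asymmetric way, and the clean factor $(-1)^{|k|_{b-1}}$ in \eqref{ter} only emerges after several such contributions cancel. In characteristic $p$, in particular when $p=2$, one cannot use the shortcut $2a = 0 \Rightarrow a = 0$ that is implicit in some of the corresponding arguments over $\CC$; every sign must be accounted for intrinsically. This is precisely the kind of careful tracking flagged in the introduction as the main technical departure from \cite{Peng16}, and I expect the $F$-side computation to require the bulk of the work, with the $E$-side following by an essentially dual but sign-simpler argument.
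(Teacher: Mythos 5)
Your base case ($b=a+2$) is essentially the paper's computation: write the long generator via its quasideterminant formula, observe that $E_{b-1;k,j}^{(1)}=T_{b-1,b;k,j}^{(1)}$ is a single degree-one RTT generator whose indices overlap with only one factor, and evaluate the surviving bracket. (Two small slips there: the Gauss factor to absorb on the $E$-side is $D'_a(u)$, not $D'_{a+1}(u)$, since $E_{a,b}(u)=D'_a(u)\cdot|\cdots|$ by \eqref{quasie}; and besides \eqref{commurelation} you also need the plain relation \eqref{RTT relations} for the one genuinely non-commuting term.)

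The genuine gap is in your inductive step. The paper does not induct on $b-a$ at all: it writes $F_{b-1,a}(u)={}^{\mu}T_{b-1,a}(u)D'_a(u)-\sum_{x,y}{}^{\mu}T_{b-1,x}(u)\,{}^{\mu}S_{x,y}(u)\,{}^{\mu}T_{y,a}(u)D'_a(u)$ for \emph{general} $a$ (equation \eqref{1}) and shows that bracketing with the single generator $F_{b-1;j,k}^{(1)}$ simply replaces the leftmost block index $b-1$ by $b$ in every term (equations \eqref{2}--\eqref{3}), because all other factors have non-overlapping indices. That structural observation is the entire content of the general case, and it is absent from your proposal. What you propose instead --- substituting the inductive hypothesis $E_{a,b-1;i,k}^{(r)}=(-1)^{|k'|_{b-2}}[E_{a,b-2;i,k'}^{(r)},E_{b-2;k',k}^{(1)}]$ and "matching the nested supercommutator with the quasideterminantal expansion of $E_{a,b;i,j}(u)$" --- does not close: applying super-Jacobi to the resulting double bracket produces a term $[E_{a,b-2;i,k'}^{(r)},[E_{b-2;k',k}^{(1)},E_{b-1;k,j}^{(1)}]]$ whose inner bracket is (a degree-one instance of) $E_{b-2,b;k',j}^{(1)}$, i.e.\ a \emph{non-adjacent} $E$, so you are left needing the identity $[E_{a,b-2;i,k'}^{(r)},E_{b-2,b;k',j}^{(1)}]=\pm E_{a,b;i,j}^{(r)}$, which is a strengthening of the lemma rather than an instance of the hypothesis. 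Either you prove that more general identity (more work, not less) or you fall back on the direct quasideterminant computation for general $a$, which makes the induction vacuous. Separately, your $k$-independence argument is mis-stated: the "difference of two choices $k,k'$" cannot be expressed as a commutator with $E_{b-1;k,k'}^{(1)}$, since $E_{b-1;\cdot,\cdot}$ has its second index ranging over block $b$, not block $b-1$; the element you would need is an entry of $D_{b-1}$ or $T_{b-1,b-1}$. In the paper this issue never arises because the computation is carried out uniformly in $k$ and the answer visibly does not depend on it.
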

\begin{remark}
The equivalent expressions of \eqref{ter} by the power series is:
\begin{align}
	E_{a,b;i,j}(u)&=(-1)^{|k|_{b-1}}[E_{a,b-1;i,k}(u), E_{b-1;k,j}^{(1)}],\label{terpow1}\\
	F_{b,a;j,i}(u)&=(-1)^{|k|_{b-1}}[F_{b-1;j,k}^{(1)}, F_{b-1,a;k,i}(u)].\label{terpow2}
\end{align}
\end{remark}
\begin{proof}
The inductive relation of the case when $b=3$ and $a=1$ for the $E$'s has been proved in detail in \cite[Lemma 3.3]{Peng16},
then in order to demonstrate that the proof therein work perfectly for the positive characteristic,
we will verify the statement of the $F$'s for the general case. 

We start firstly with the initial step when $b=3$ and $a=1$,
and then obtain the general case,
which is similar, except that the expression of $F_{b-1,a;k,i}^{(r)}$ is more complicated.
By the case of $a=1$ in \eqref{quasif} and \eqref{Jacobi1},
we have the following first equality
$$
\begin{array}{rl}
&[F_{3,2;j,k}^{(1)}, F_{2,1;k,i}^{(r)}]=[T_{3,2;j,k}^{(1)}, \sum_{s=0}^{r}\sum_{t=1}^{\mu_1}T^{(s)}_{2,1;k,t}D'^{(r-s)}_{1;t,i}]\\
=&[t_{\mu_1+\mu_2+j,\mu_1+k}^{(1)}, \sum_{s=0}^{r}\sum_{t=1}^{\mu_1}t^{(s)}_{\mu_1+k,t}D'^{(r-s)}_{1;t,i}]\\
=&\sum_{s=0}^{r}\sum_{t=1}^{\mu_1}[t_{\mu_1+\mu_2+j,\mu_1+k}^{(1)}, t^{(s)}_{\mu_1+k,t}]D'^{(r-s)}_{1;t,i}\\
=&\sum_{s=0}^{r}\sum_{t=1}^{\mu_1}(-1)^{\mu_1+k}\left(t_{\mu_1+\mu_2+j,t}^{(s)}-\delta_{\mu_1+\mu_2+j,t}t^{(s)}_{\mu_1+k,\mu_1+k}\right)D'^{(r-s)}_{1;t,i}\\
=&(-1)^{|k|_2}\sum_{s=0}^{r}\sum_{t=1}^{\mu_1}t_{\mu_1+\mu_2+j,t}^{(s)}D'^{(r-s)}_{1;t,i}
=(-1)^{|k|_2}\sum_{s=0}^{r}\sum_{t=1}^{\mu_1}T_{3,1;j,t}^{(s)}D'^{(r-s)}_{1;t,i}\\
=&(-1)^{|k|_2}F_{3,1;j,i}^{(r)},
\end{array}
$$ 
where $1{\leq}j{\leq}\mu_3$, $1{\leq}k{\leq}\mu_2$ and $1{\leq}t{\leq}\mu_1$.
Note that we may express $D_{1;t,i}^{\prime(r-s)}$ in terms of $t_{\alpha,\beta}^{\prime(r)}$, and the subscriptions $1{\leq}\alpha,\beta{\leq}\mu_1$ will never overlap with the subscriptions of $t_{\mu_1+\mu_2+j,\mu_1+k}^{(1)}$, 
it follows that they commute with each other by \eqref{commurelation}, 
and then the third equality is obtained. 

Generally,
according to \eqref{quasif},
we have 
\begin{equation}\label{1}
F_{m,a}(u){=}{^\mu}T_{m,a}(u)D^{\prime}_a(u)
{-}\sum\limits_{x,y=1}^{a-1}{^\mu}T_{m,x}(u)\cdot {^\mu}S_{x,y}(u)\cdot {^\mu}T_{y,a}(u)\cdot D^{\prime}_a(u)
\end{equation}
for all $a<m{\leq}n$,
where
$$S(u){=}\left(
\begin{array}{ccc}
	{^\mu}S_{1,1}(u) & \cdots &{^\mu}S_{1,a-1}(u)\\
	\vdots & \ddots &\vdots\\
	{^\mu}S_{a-1,1}(u) & \cdots & {^\mu}S_{a-1,a-1}(u)
\end{array}
\right):{=}\left(
\begin{array}{ccc}
	{^\mu}T_{1,1}(u) & \cdots &{^\mu}T_{1,a-1}(u)\\
	\vdots & \ddots &\vdots\\
	{^\mu}T_{a-1,1}(u) & \cdots & {^\mu}T_{a-1,a-1}(u)
\end{array}
\right)^{-1}.$$

In order to obtain the general case,
we focus on the super-bracket of $F_{m;j,k}^{(1)}$ and the right-hand side of the equation \eqref{1}.
Note that we may express every coefficient of $ {^\mu}S_{x,y}(u)$,
${^\mu}T_{y,a}(u)$
and $D^{\prime}_a(u)$ in terms of $t^{(s)}_{\alpha,\beta}$, $t'^{(s)}_{\alpha,\beta}$,
and then obtain the fact that their subscriptions $1{\leq}\alpha,\beta{\leq}\sum_{t=1}^{a}\mu_t<\sum_{t=1}^{m-1}\mu_t<\sum_{t=1}^{m}\mu_t$, 
will never overlap with the subscriptions of 
$F_{m;j,k}^{(1)}=T^{(1)}_{m+1,m;j,k}=t^{(1)}_{\sum_{t=1}^{m}\mu_t+j,\sum_{t=1}^{m-1}\mu_t+k}$.
Thus they supercommute by $\eqref{RTT relations}$ and $\eqref{commurelation}$.
For each coefficient $T^{(s)}_{m,x;k,i}$ of the $(k,i)$-entry in  ${^\mu}T_{m,x}(u)$,
we have 
\begin{align*}
&[F_{m;j,k}^{(1)},T^{(s)}_{m,x;k,i}]
=[T_{m+1,m;j,k}^{(1)},T^{(s)}_{m,x;k,i}]
=\left[t^{(1)}_{\sum_{t=1}^{m}\mu_t+j,\sum_{t=1}^{m-1}\mu_t+k},t^{(s)}_{\sum_{t=1}^{m-1}\mu_t+k,\sum_{t=1}^{x-1}\mu_t+i}\right]	\\
&=_{\eqref{RTT relations}}(-1)^{|k|_{m}}t^{(s)}_{\sum_{t=1}^{m}\mu_t+j,\sum_{t=1}^{x-1}\mu_t+i}
=(-1)^{|k|_{m}}T_{m+1,x;j,i}^{(s)},\\
\end{align*}
which gives
\begin{equation}\label{2}
[F_{m;j,k}^{(1)},T_{m,x;k,i}(u)]=(-1)^{|k|_{m}}T_{m+1,x;j,i}(u).
\end{equation}
Similartly,
considering the coefficients of the entries  in  ${^\mu}T_{m,a}(u)D^{\prime}_a(u)$,
we have 
\begin{align*}
&\left[F_{m;j,k}^{(1)},(\left({^\mu}T_{m,a}(u)D^{\prime}_a(u)\right)_{k,i}){}^{(r)}\right]
=\left[T_{m+1,m;j,k}^{(1)},\sum_{b=1}^{\mu_a}\sum_{s=0}^{r}T_{m,a;k,b}^{(s)}  D'{}^{(r-s)}{}_{a;b,i}\right]\\
=&\sum_{b=1}^{\mu_a}\sum_{s=0}^{r}\left[t_{\sum_{t=1}^{m}\mu_t+j,\sum_{t=1}^{m-1}\mu_t+k}^{(1)},t_{\sum_{t=1}^{m-1}\mu_t+k,\sum_{t=1}^{a-1}\mu_t+b}^{(s)}  D'{}^{(r-s)}_{\sum_{t=1}^{a-1}\mu_t+b,\sum_{t=1}^{a-1}\mu_t+i}\right]	\\
&\mbox{since~the~ subscriptions~}
\sum_{t=1}^{a-1}\mu_t<\sum_{t=1}^{m}\mu_t, \\
&=_{\eqref{commurelation}}\sum_{b=1}^{\mu_a}\sum_{s=0}^{r}\left[t_{\sum_{t=1}^{m}\mu_t+j,\sum_{t=1}^{m-1}\mu_t+k}^{(1)},t_{\sum_{t=1}^{m-1}\mu_t+k,\sum_{t=1}^{a-1}\mu_t+b}^{(s)} \right] D'{}^{(r-s)}_{\sum_{t=1}^{a-1}\mu_t+b,\sum_{t=1}^{a-1}\mu_t+i}\\
&=_{\eqref{RTT relations}}\sum_{b=1}^{\mu_a}\sum_{s=0}^{r}(-1)^{|k|_{m}}t^{(s)}_{\sum_{t=1}^{m}\mu_t+j,\sum_{t=1}^{a-1}\mu_t+b}D'{}^{(r-s)}_{\sum_{t=1}^{a-1}\mu_t+b,\sum_{t=1}^{a-1}\mu_t+i}\\
&=\sum_{b=1}^{\mu_a}\sum_{s=0}^{r}(-1)^{|k|_{m}}T_{m+1,a;j,b}^{(s)}D'{}^{(r-s)}_{a,a;b,i}
=(-1)^{|k|_{m}}(\left({^\mu}T_{m+1,a}(u)D^{\prime}_a(u)\right)_{j,i}){}^{(r)},
\end{align*}
which gives
\begin{equation}\label{3}
[F_{m;j,k}^{(1)},\left({^\mu}T_{m,a}(u)D^{\prime}_a(u)\right)_{k,i}]=(-1)^{|k|_{m}}\left({^\mu}T_{m+1,a}(u)D^{\prime}_a(u)\right)_{j,i}.
\end{equation}

With \eqref{1}--\eqref{3} and supercommutate,
we have
\begin{align*}
&[F_{b-1;j,k}^{(1)}, F_{b-1,a;k,i}(u)]\\
=&_{\eqref{1}}\left[F_{b-1;j,k}^{(1)}, \left({^\mu}T_{b-1,a}(u)D^{\prime}_a(u)\right)_{j,i}
{-}\sum\limits_{x,y=1}^{a-1}\left({^\mu}T_{b-1,x}(u)\cdot {^\mu}S_{x,y}(u)\cdot {^\mu}T_{y,a}(u)\cdot D^{\prime}_a(u)\right)_{k,i}\right]\\
=&^{\eqref{2}}_{\eqref{3}}(-1)^{|k|_{b-1}}\left({^\mu}T_{b,a}(u)D^{\prime}_a(u)_{j,i}{-}\sum\limits_{x,y=1}^{a-1}\left({^\mu}T_{b,x}(u)\cdot {^\mu}S_{x,y}(u)\cdot {^\mu}T_{y,a}(u)\cdot D^{\prime}_a(u)\right)_{j,i}\right)\\
=&(-1)^{|k|_{b-1}}\left({^\mu}T_{b,a}(u)D^{\prime}_a(u){-}\sum\limits_{x,y=1}^{a-1}\left({^\mu}T_{b,x}(u)\cdot {^\mu}S_{x,y}(u)\cdot {^\mu}T_{y,a}(u)\cdot D^{\prime}_a(u)\right)\right)_{j,i}\\
=&_{\eqref{1}}(-1)^{|k|_{b-1}}F_{b,a;j,i}(u).
\end{align*}
\end{proof}

For any fixed shape $\mu$,
the matrix equation \eqref{gaussdec} means that each generator $t_{ij}^{(r)}$ can be expressed as a sum of supermonomials in $D_{a;i,j}^{(r)}$, $E_{a,b;i,j}^{(r)}$ and $F_{b,a;i,j}^{(r)}$, in a certain order that all the $F$'s appear before the $D$'s and all the $D$'s appear before the $E$'s. By (\ref{ter}), it is enough to use $D_{a;i,j}^{(r)}$, $E_{a;i,j}^{(r)}$ and $ F_{a;i,j}^{(r)}$ only rather than use all of the $E$'s and the $F$'s.
So we have the following
\begin{theorem}\label{gendef}
For any fixed shape $\mu$,
the modular superalgebra $\Ymn$ is generated by the following elements
	\begin{align*}
		&\big\lbrace D_{a;i,j}^{(r)}, D_{a;i,j}^{\prime(r)} \,|\, {1{\leq}a{\leq}n,\; 1{\leq}i,j{\leq}\mu_a,\; r\geq 0}\big\rbrace,\\
		&\big\lbrace E_{a;i,j}^{(r)} \,|\, {1{\leq}a< n,\; 1{\leq}i{\leq}\mu_a, 1{\leq}j\leq\mu_{a+1},\; r\geq 1}\big\rbrace,\\
		&\big\lbrace F_{a;i,j}^{(r)} \,|\, {1{\leq}a< n,\; 1{\leq}i\leq\mu_{a+1}, 1{\leq}j{\leq}\mu_a,\; r\geq 1}\big\rbrace.
	\end{align*}
\end{theorem}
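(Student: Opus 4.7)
The plan is to reduce the claim to the combination of two already-established ingredients: the Gauss decomposition \eqref{gaussdec} and the iterative formula \eqref{ter} from the preceding lemma. Let $Y'$ denote the subalgebra of $\Ymn$ generated by the displayed sets. Since $\Ymn$ is generated as a $\kk$-algebra by the RTT generators $\{t_{i,j}^{(r)} \mid 1\leq i,j\leq M+N,\, r\geq 0\}$, it suffices to show $t_{i,j}^{(r)} \in Y'$ for all $i,j,r$.

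First I would use $T(u)=F(u)D(u)E(u)$ to expand the $(i,j)$-entry of $T(u)$ in block form. For $i,j$ lying in blocks $a,b$ respectively with $a\leq b$, the product $FDE$ expresses $T_{a,b;i,j}(u)$ as a finite sum of ordered noncommutative products of coefficients of the entries of $F_{c,a}(u)$ (for $c\geq a$), $D_{a'}(u)$ (and consequently $D_{a'}^{\prime}(u)$, which arise via formal inversion of the invertible series $D_{a'}(u)$), and $E_{a'',b''}(u)$. Extracting the coefficient of $u^{-r}$ shows that every $t_{i,j}^{(r)}$ lies in the algebra generated by
\[
\{D_{a;i,j}^{(r)},\,D_{a;i,j}^{\prime(r)}\}\cup\{E_{a,b;i,j}^{(r)}\mid 1\leq a<b\leq n\}\cup\{F_{b,a;i,j}^{(r)}\mid 1\leq a<b\leq n\}.
\]

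Next I would remove the non-adjacent $E$'s and $F$'s by induction on $b-a$. When $b-a=1$, the elements $E_{a,a+1;i,j}^{(r)}=E_{a;i,j}^{(r)}$ and $F_{a+1,a;i,j}^{(r)}=F_{a;i,j}^{(r)}$ are generators of $Y'$ by hypothesis. For $b-a\geq 2$ and any choice of $k\in[1,\mu_{b-1}]$, formula \eqref{ter} gives
\[
E_{a,b;i,j}^{(r)}=(-1)^{|k|_{b-1}}[E_{a,b-1;i,k}^{(r)},\,E_{b-1;k,j}^{(1)}],\qquad F_{b,a;j,i}^{(r)}=(-1)^{|k|_{b-1}}[F_{b-1;j,k}^{(1)},\,F_{b-1,a;k,i}^{(r)}].
\]
By the inductive hypothesis, $E_{a,b-1;i,k}^{(r)}$ and $F_{b-1,a;k,i}^{(r)}$ belong to $Y'$, while $E_{b-1;k,j}^{(1)}$ and $F_{b-1;j,k}^{(1)}$ are among the adjacent generators; hence $E_{a,b;i,j}^{(r)}$ and $F_{b,a;j,i}^{(r)}$ lie in $Y'$ as well. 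Combining this with the previous paragraph yields $t_{i,j}^{(r)}\in Y'$ for all $i,j,r$, proving the theorem.

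There is no real obstacle here: the argument is a direct bookkeeping consequence of the Gauss decomposition and Lemma \eqref{ter}. The only mild point requiring attention is that the sign factors $(-1)^{|k|_{b-1}}$ coming from the $\ZZ_2$-grading are harmless under iteration, because the preceding lemma has already been verified over the positive characteristic field $\kk$, so all the supercommutator identities transfer without modification.
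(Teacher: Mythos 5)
Your proposal is correct and follows essentially the same route as the paper, which likewise derives the statement directly from the Gauss decomposition \eqref{gaussdec} (expressing every $t_{i,j}^{(r)}$ through the $D$'s, $D'$'s and the general $E_{a,b}$'s, $F_{b,a}$'s) and then invokes the recursion \eqref{ter} to reduce the non-adjacent $E$'s and $F$'s to the adjacent generators. Your explicit induction on $b-a$ merely spells out what the paper leaves implicit, so there is nothing to correct.
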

These generators in the above Theorem \ref{gendef} are called as \textit{parabolic generators} of $\Ymn$.
Note that parabolic generators of $\Ymn$ depend on the shape $\mu$,
and their parities depend on the fixed sequence $\so$.
The paper is devoted to write down explicit defining relations of 
$\Ymn$ over $\kk$ for any fixed $\mu$ and any fixed $\so$.
In the followings,
we will use the notation $Y_{\mu}$ or $\Ymn(\so)$ or $Y_{\mu}(\so)$ to emphasize the choice of $\mu$ or $\so$ or both when necessary.
\section{Maps between Super Yangians}\label{section:maps}
The relations among parabolic generators of $\Ymn$ are mainly rely on the special cases when $n=2,3$, 
since there are certain nice homomorphisms between super Yangian,
which carry the relations in the special cases to the general case.
There are some notations related to any fixed \zomn-sequence $\so$ in \cite[section 4]{Peng16}:
\begin{enumerate}
\item $\mfs$:= the \zomn-sequence obtained by interchanging the 0's and 1's of $\so$.
\item $\sr$:= the reverse of $\so$.
\item $\sd$:= $(\mfs)^r$, the reverse of $\mfs$.
\end{enumerate}
Let $|i|_{\so}$ be the parity of the $i$-th digit in the sequence $\so$.
Then it is easy to obtain	
\begin{equation}\label{parity}
|i|_{\so}=|i|_{\mfs}+1(\textbf{mod}2),
\,
|i|_{\so}=|M{+}N{+}1{-}i|_{\sr},\,
|i|_{\so}=|M{+}N{+}1{-}i|_{\sd}+1(\textbf{mod}2).
\end{equation}
The notation $\so_1\so_2$ means the concatenation of two $01$-sequences $\so_1$ and $\so_2$.
We can extend some homomorphisms of $\Ymn$ over $\CC$ to the positive characteristic in the following proposition.
Note that 
\begin{proposition}
	\begin{enumerate}
		\item The map $\rho_{M|N}:\Ymn(\so)\rightarrow Y_{N|M}(\sd)$ defined by
		\[
		\rho_{M|N}\big(t_{ij}(u)\big)=t_{M+N+1-i,M+N+1-j}(-u)
		\]
		is an isomorphism.
		\item The map $\sigma_{M|N}:\Ymn(\so)\rightarrow \Ymn(\so)$ defined by
		\[\sigma_{M|N}\big(T(u)\big)=T(-u) \,~i.e.~\, \sigma_{M|N}\big(t_{ij}(u)\big)=t_{ij}(-u) \]
		is an involutive anti-automorphism of $\Ymn(\so)$.
		\item The map $S_{M|N}:\Ymn(\so)\rightarrow \Ymn(\so)$ defined by
		\[S_{M|N}\big(T(u)\big)=\big(T(u)\big)^{-1} \,~i.e.~\, S_{M|N}\big(t_{ij}(u)\big)=t^{\prime}_{ij}(u) \]
			is an anti-automorphism of $\Ymn(\so)$.
		\item  The map $\omega_{M|N}:\Ymn(\so)\rightarrow \Ymn(\so)$ defined by taking the composition of the anti-homomorphism $\sigma_{M|N}$ and $S_{M|N}$, i.e.,
		\[
		\omega_{M|N}\big(T(u)\big)=\big(T(-u)\big)^{-1}
		\,~i.e.~\, \omega_{M|N}\big(t_{ij}(u)\big)=t^{\prime}_{ij}(-u)
		\]
		is an involutive automorphism.
		\end{enumerate}
\end{proposition}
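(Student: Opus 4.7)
The plan is to verify each of the four assertions by checking that the defining RTT relations \eqref{RTT relations}, or equivalently their power-series form \eqref{tiju tkl relation}, are preserved under the stated map. These are positive-characteristic analogues of classical results over $\CC$ (Nazarov, Gow, Peng), and the main task is to confirm that every step consists of formal manipulations of power series in $u,v$ together with combinatorial sign identities, both of which transfer verbatim to arbitrary $\Char(\kk)=p>0$; in particular, no step invokes a cancellation of the form $a=-a\Rightarrow a=0$ that the introduction flags as unavailable in characteristic two.

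For (1), the parity rule \eqref{parity} gives $|M{+}N{+}1{-}i|_{\sd}\equiv |i|_{\so}+1\pmod 2$, so that re-indexing $i\mapsto M{+}N{+}1{-}i$ in the sign factor $(-1)^{|i||j|+|i||k|+|j||k|}$ on the RHS of \eqref{tiju tkl relation} produces a net factor of $-1$ (by a direct parity expansion). Meanwhile the substitution $u,v\mapsto -u,-v$ flips $(u-v)$ to $-(u-v)$ on the LHS. The two sign changes cancel, so the image is exactly the RTT relation for $Y_{N|M}(\sd)$; the map $\rho_{N|M}$ furnishes a two-sided inverse. For (2), record that a graded anti-homomorphism satisfies $\sigma([A,B])=-[\sigma(A),\sigma(B)]$; then applying $\sigma_{M|N}$ to \eqref{tiju tkl relation} combined with $u,v\mapsto -u,-v$ again produces two compensating sign changes. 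To match the right-hand side one invokes the auxiliary symmetry $[t_{k,j}(u),t_{i,l}(v)]=[t_{k,j}(v),t_{i,l}(u)]$, which is itself a direct consequence of \eqref{tiju tkl relation}. Involutivity is immediate.

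For (3), the plan is to deduce from \eqref{commurelation} (together with its variant obtained by interchanging the roles of $T$ and $T^{-1}$) that the entries $t'_{i,j}(u)$ satisfy precisely the image of \eqref{tiju tkl relation} under an anti-homomorphism; the parities match because $T(u)^{-1}$ inherits its block-parity structure from $T(u)$. This is the standard identification of $S_{M|N}$ with the antipode of the Hopf-super-algebra structure \eqref{Del}, specialized so that the verification is entirely at the level of power series and signs. For (4), $\omega_{M|N}=\sigma_{M|N}\circ S_{M|N}$ is a composition of two graded anti-automorphisms and is therefore a graded automorphism. Involutivity is a one-line matrix computation: $\omega_{M|N}(T(u))=T(-u)^{-1}$, and since $\omega_{M|N}$ acts entrywise one has $\omega_{M|N}(T(-u)^{-1})=\omega_{M|N}(T(-u))^{-1}=(T(u)^{-1})^{-1}=T(u)$.

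The main obstacle will be the sign bookkeeping in (3), where the interplay of the $\mathbb{Z}_2$-grading with matrix inversion demands the same careful accounting that the introduction flags for $F$-type relations: one must simultaneously keep track of the sign produced by transposing two generators of possibly odd parity and the sign produced by matrix inversion of an $(M+N)\times(M+N)$ matrix with entries of mixed parity. Once these signs are tracked correctly, every step is a formal identity over $\kk$ and the proofs proceed verbatim from the characteristic-zero case.
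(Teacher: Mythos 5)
Your proposal is correct in substance, and for parts (1) and (4) it coincides with the paper's argument (the parity computation from \eqref{parity} for $\rho_{M|N}$, and applying $\omega_{M|N}$ to the identity $\omega_{M|N}(T(u))\cdot T(-u)=1$ for involutivity). For parts (2) and (3), however, you take a genuinely different route: you verify the relations directly at the level of the series form \eqref{tiju tkl relation}, using the spectral-parameter symmetry $[t_{k,j}(u),t_{i,l}(v)]=[t_{k,j}(v),t_{i,l}(u)]$ (which does follow from \eqref{tiju tkl relation}, since both sides of that relation are antisymmetric in $u,v$), whereas the paper packages everything into the matrix form \eqref{RTT}, conjugates by the permutation operator $P$ and substitutes $(u,v)\mapsto(-v,-u)$ to get $\sigma_{M|N}$, and simply inverts both sides of \eqref{RTT} to get $S_{M|N}$. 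The $R$-matrix route is more compact and hides all the Koszul signs inside $P$; your entrywise route makes the sign bookkeeping explicit, which is in keeping with the paper's stated concerns about signs in characteristic $p$, but it costs you the extra auxiliary symmetry lemma. One soft spot: for (3) you propose to extract the pure $t'$--$t'$ relations from \eqref{commurelation} and "its variant". Equation \eqref{commurelation} is a mixed $t$--$t'$ relation, and it does not yield the $t'$--$t'$ relations by any short manipulation; the efficient argument (and the one the paper intends, despite an evident typo in its displayed identity) is the one-line matrix computation $T_1(u)^{-1}T_2(v)^{-1}R(u-v)=R(u-v)T_2(v)^{-1}T_1(u)^{-1}$ obtained by multiplying \eqref{RTT} by inverses on both sides, which is exactly the statement that $S_{M|N}$ reverses products. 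I would replace that step of your plan by this inversion argument; everything else goes through verbatim in characteristic $p$, as you claim.
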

\begin{proof}
The map $\rho_{M|N}$ follows easily from the defining relations \eqref{tiju tkl relation} associated to the relation of parities in \eqref{parity}.
As is well known,
these relations \eqref{tiju tkl relation} can be equivalently written in the following $RTT$ matrix form:
\begin{equation}\label{RTT}
R(u-v)T_1(u)T_2(v)=T_2(v)T_1(u)R(u-v)
\end{equation}	
where $R(u)$ is the \textit{Yang $R$-matrix} of $\textbf{1}-Pu^{-1}$,
and $P=\sum_{i,j=1}^{M+N}(-1)^{|i|}e_{ij}\otimes e_{ji}$.

Then,
if we conjugate both sides of \eqref{RTT} by $P$ and then replace $(u,v)$ by $(-v,-u)$,
then we have
$$R(u-v)T_2(-v)T_1(-u)=T_1(-u)T_2(v)R(u-v).$$
So the map $\sigma_{M|N}$ is obtained.
Note now that the map $\sigma_{M|N}$ is involutive,
and then it is bijective.
For the anti-homomorphism of the map $S_{M|N}$ observe that the relation
$$R(u-v)T_2(v)^{-1}T_1(u)^{-1}=R(u-v)T_2(v)^{-1}T_1(u)^{-1}$$
is equivalent to \eqref{RTT}.
With these,
then the map $\omega_{M|N}$ is an algebra homomorphism.
Let us show that it is involutive.
Applying $\omega_{M|N}$ to both sides of the identity $\omega_{M|N}(T(u))\cdot T(-u)=1$,
it gives $\omega^2_{M|N}(T(u))\cdot T(u)^{-1}=1$,
so the square $\omega^2_{M|N}$ is the identity map.
It follows that the map $\omega_{M|N}$ is bijective,
and then the map $S_{M|N}$ also is bijective.
\end{proof}
Notice that the anti-automorphism $S_{M|N}$ is \textit{not} involutive.
It is the antipode of the Hopf algebra $\Ymn$.

The following {\it swap map} $\zeta_{M|N}$ in proposition \ref{swap} and {\it shift map} $\psi_{p|q}$ in proposition \ref{shift},
have an important role in presenting Yangians in \cite{BK1}, \cite{Gow07} and \cite{Peng16}. 
In order to state that this will also be displayed again in the positive characteristic,
we will describe them explicitly.

${}^{\mu}\widetilde E_{a,b}(u)$ and ${}^{\mu}\widetilde F_{a,b}(u)$ denote the $\mu_a\times\mu_b$-matrices in the $(a,b)$-th block position of the upper-triangular block matrix $E(u)^{-1}$ and the lower-triangular block matrix $F(u)^{-1}$, respectively.
Let $\widetilde E_{a,b;i,j}(u), \widetilde F_{a,b;i,j}(u) $ be the $(i,j)$-th entry of the $\mu_{a}\times \mu_{b}$ matrix ${}^{\mu}\widetilde E_{a,b}(u)$, respectively.
Then it is easy to obtain
	\begin{align}
		{}^{\mu}\widetilde E_{a,a}(u)&={}^{\mu}\widetilde F_{a,a}(u)=I_{\mu_a},\quad \mbox{for}\, 1{\leq}a{\leq}n,\\	{}^{\mu}\widetilde{E}_{a,b}(u)&=\sum_{a=i_0<i_1<\ldots<i_s=b}(-1)^s~{}^{\mu}E_{a,i_1}(u){}^{\mu}E_{i_1,i_2}(u)\cdots {}^{\mu}E_{i_{s-1},b}(u),\,\mbox{for}\, a<b,\label{einverse}\\
		{}^{\mu}\widetilde{F}_{b,a}(u)&=\sum_{a=i_0<i_1<\ldots<i_s=b}(-1)^s~{}^{\mu}F_{b,i_{s-1}}(u){}^{\mu}F_{i_{s-1},i_{s-2}}(u)\cdots {}^{\mu}F_{i_{1},a}(u).\label{finverse}	
	\end{align}
	For example,
	the relation \eqref{einverse} for $\widetilde{E}_{a,b}(u)$ is from
	the recursive relation
	\[{}^{\mu}\widetilde E_{a,b}(u)
	=-\big({}^{\mu}E_{a,b}(u)+{}^{\mu}\widetilde E_{a,a+1}(u){}^{\mu}E_{a+1,b}+\cdots +{}^{\mu}\widetilde E_{a,b-1}(u){}^{\mu}E_{b-1,b}(u)\big)\] 	
	produced from the equality $E(u)^{-1}E(u)=\textbf{I}$.

\begin{proposition}\label{swap}		
The map $\zeta_{M|N}:\Ymn(\so)\rightarrow Y_{N|M}(\sd)$ defined by the composition of isomorphisms $\omega_{M|N}$ and $\rho_{M|N}$
:$\zeta_{M|N}=\rho_{M|N}\circ\omega_{M|N}$ is an isomorphism with 
\begin{equation}\label{zeta}
\zeta_{M|N}\big(t_{ij}(u)\big)=t_{M+N+1-i,M+N+1-j}^{\prime}(u).
\end{equation}
for all admissible $i,j$.
Note that those on the right hand side are in $Y_{N|M}(\sd)=Y_{\overleftarrow{\mu}}(\sd)$.		
\end{proposition}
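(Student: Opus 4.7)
The plan is to reduce Proposition \ref{swap} almost entirely to what has already been established in the preceding proposition. Since $\omega_{M|N}:\Ymn(\so)\to\Ymn(\so)$ has just been shown to be an involutive automorphism and $\rho_{M|N}:\Ymn(\so)\to Y_{N|M}(\sd)$ is an isomorphism, their composition $\zeta_{M|N}=\rho_{M|N}\circ\omega_{M|N}$ is automatically an algebra isomorphism between the required super Yangians. Thus the only real content of Proposition \ref{swap} is the verification of the explicit formula \eqref{zeta}.

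First I would apply $\omega_{M|N}$ using its defining formula to obtain $\omega_{M|N}(t_{ij}(u))=t'_{ij}(-u)$, so that
\[
\zeta_{M|N}\bigl(t_{ij}(u)\bigr)=\rho_{M|N}\bigl(t'_{ij}(-u)\bigr).
\]
The subtle step is then to evaluate $\rho_{M|N}$ on entries of $T(u)^{-1}$, which are not themselves RTT generators. To handle this I would pass to matrix language: letting $J$ denote the $(M+N)\times(M+N)$ anti-diagonal permutation matrix corresponding to $i\mapsto M+N+1-i$, the definition of $\rho_{M|N}$ is equivalent to the matrix identity $\rho_{M|N}(T(u))=J\,T(-u)\,J$, where the right-hand $T$ is the generating matrix of $Y_{N|M}(\sd)$. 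Since $\rho_{M|N}$ is an algebra homomorphism, applying it to both sides of $T(u)\cdot T(u)^{-1}=I$ yields $\rho_{M|N}(T(u)^{-1})=J\,T(-u)^{-1}\,J$, which entrywise reads
\[
\rho_{M|N}\bigl(t'_{ij}(u)\bigr)=t'_{M+N+1-i,\,M+N+1-j}(-u).
\]

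Substituting $u\mapsto -u$ on both sides and composing with the earlier action of $\omega_{M|N}$ then produces exactly the desired identity $\zeta_{M|N}(t_{ij}(u))=t'_{M+N+1-i,\,M+N+1-j}(u)$, which is \eqref{zeta}. There is no serious technical obstacle here; the only point demanding care is the cancellation of the two sign flips $u\mapsto -u$ introduced respectively by $\omega_{M|N}$ and by $\rho_{M|N}$ — they must combine to leave $+u$ in the final formula. The parity bookkeeping under the double duality $\so\mapsto\sd$, governed by \eqref{parity}, has already been absorbed into the well-definedness of $\rho_{M|N}$ proved in the previous proposition, so no additional verification of $\mathbb{Z}_2$-grading compatibility is required in this proof.
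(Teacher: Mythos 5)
Your argument is correct and is precisely the (implicit) justification the paper relies on: Proposition \ref{swap} is stated without a separate proof, being an immediate consequence of the preceding proposition that $\omega_{M|N}$ and $\rho_{M|N}$ are isomorphisms, with formula \eqref{zeta} obtained exactly as you do by pushing $T(u)^{-1}$ through the homomorphism $\rho_{M|N}$ and noting that the two substitutions $u\mapsto -u$ cancel. Your matrix identity $\rho_{M|N}(T(u)^{-1})=J\,T(-u)^{-1}J$ is a clean way to make the entrywise evaluation of $\rho_{M|N}$ on the $t'_{ij}(u)$ rigorous, and the parity compatibility is indeed already handled by \eqref{parity}.
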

It can be seen that the map $\zeta_{M|N}$ turns the matrix $T(u)$ up side down, 
that is, for $1{\leq}a,b{\leq}n$,
each $(n-a,n-b)$-th block matrix in $Y_{N|M}(\sd)$ has the same size $\mu_a\times \mu_b$ with the $(a,b)$-th block in $\Ymn(\so)$.
It follows that we will consider the following reverse of $\mu$:
\[\overleftarrow{\mu}:=(\mu_n,\mu_{n-1},\ldots,\mu_{1}),\]
and take $\sd$ in the image space in order to keep the parities unchanged.

To avoid the abuse of notation,
we can define in the same way the elements \{$\overleftarrow{D}_{a;i,j}(u);\overleftarrow{D}^{\prime}_{a;i,j}(u)$\}, \{$\overleftarrow{E}_{a;i,j}(u)$\}, \{$\overleftarrow{F}_{a;i,j}(u)$\}, or \{$\overleftarrow{D}_{a;i,j}^{(r)};\overleftarrow{D}^{\prime(r)}_{a;i,j}$\}, \{$\overleftarrow{E}_{a;i,j}^{(r)}$\}, \{$\overleftarrow{F}_{a;i,j}^{(r)}$\} in $Y_{N|M}(\sd)=Y_{\overleftarrow{\mu}}(\sd)$ by Gauss decomposition with respect to $\overleftarrow{\mu}$,
and other corresponding elements.

Thus for any $1{\leq}i{\leq}\mu_a$ and $1{\leq}j{\leq}\mu_b$,
the map $\zeta_{M|N}$ sends the $(i,j)$-th entry $T_{a,b;i,j}=t_{\sum_{k=1}^{a-1}\mu_{k}+i,\sum_{k=1}^{b-1}\mu_{k}+j}$ in the $(a,b)$-th block of $T(u)$ in $\Ymn(\so)$, 
to the following entry  
$$\overleftarrow{t'}_{M{+}N{+}1{-}(\sum_{k=1}^{a-1}\mu_{k}{+}i),M{+}N{+}1{-}(\sum_{k=1}^{b-1}\mu_{k}{+}j)}$$ of the matrix $\overleftarrow{T'}(u):=T(u)^{-1}$ in 
$Y_{N|M}(\sd)$ located in $\mu_a{+}1{-}i$-row and $\mu_b{+}1{-}j$-column for the $(n{+}1{-}a,n{+}1{-}b)$-th block in $\overleftarrow{T'}(u)$ with respect to $\overleftarrow{\mu}$.
So the map $\zeta_{M|N}$ can be expressed as \[\zeta_{M|N}\big(t_{\sum_{k=1}^{a-1}\mu_{k}+i,\sum_{k=1}^{b-1}\mu_{k}+j}\big)=\overleftarrow{t'}_{M+N+1-(\sum_{k=1}^{a-1}\mu_{k}+i),M+N+1-(\sum_{k=1}^{b-1}\mu_{k}+j)},\]
which equals to
\begin{equation}\label{zetaprime}
\zeta_{M|N}\big(T_{a,b;i,j}(u)\big)=
\overleftarrow{T'}_{n+1-a,n+1-b;\mu_a+1-i,\mu_b+1-j}.
\end{equation}
To be precise,
we have
\begin{proposition}
For any shape $\mu$ and $\so$,
we have
\begin{eqnarray}
	\zeta_{M|N}\big(D_{a;i,j}(u)\big)&=&\overleftarrow{D}^{\prime}_{n+1-a;\mu_{a}+1-i,\mu_{a}+1-j}(u),\, \forall 1{\leq}a{\leq}n,\label{zetad}	\\
	\zeta_{M|N}\big(E_{a,b;i,j}(u)\big)&=&-\overleftarrow{F}_{n+1-a,n+1-b;\mu_a+1-i,\mu_b+1-j}(u),\,\forall 1{\leq}a<b{\leq}n\label{zetae},\\
	\zeta_{M|N}\big(F_{b,a;i,j}(u)\big)&=&-\overleftarrow{E}_{n+1-b,n+1-a;\mu_b+1-i,\mu_a+1-j}(u),\,\forall 1{\leq}a<b{\leq}n.\label{zetaf}
\end{eqnarray}
\end{proposition}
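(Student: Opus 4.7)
The plan is to apply $\zeta_{M|N}$ to the Gauss decomposition $T(u) = F(u)D(u)E(u)$ and then invoke the uniqueness of such factorizations. Introduce the $(M+N)\times(M+N)$ reverse permutation matrix $\mathcal{J}$ with $\mathcal{J}_{ij} = \delta_{i+j,\,M+N+1}$. Then equation \eqref{zetaprime} may be rewritten at the matrix level as
$$
\zeta_{M|N}\bigl(T(u)\bigr) \;=\; \mathcal{J}\,\overleftarrow{T}(u)^{-1}\,\mathcal{J},
$$
where $\overleftarrow{T}(u)$ denotes the $T$-matrix of $Y_{N|M}(\sd)$ with respect to the reversed shape $\overleftarrow{\mu}$. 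Because $\zeta_{M|N}$ is an algebra isomorphism, it commutes with matrix multiplication and matrix inversion; applying it to the Gauss decomposition and conjugating by $\mathcal{J}$ (which squares to $I$) gives
$$
\bigl[\mathcal{J}\zeta_{M|N}(F(u))\mathcal{J}\bigr]\bigl[\mathcal{J}\zeta_{M|N}(D(u))\mathcal{J}\bigr]\bigl[\mathcal{J}\zeta_{M|N}(E(u))\mathcal{J}\bigr] \;=\; \overleftarrow{T}(u)^{-1}.
$$

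Conjugation by $\mathcal{J}$ simultaneously reverses the block structure from $\mu$ to $\overleftarrow{\mu}$ and interchanges upper with lower block triangularity, so the three bracketed factors on the left-hand side are respectively upper block unitriangular, block diagonal, and lower block unitriangular with respect to $\overleftarrow{\mu}$. On the other hand, inverting the Gauss decomposition in $Y_{N|M}(\sd)$ yields
$$
\overleftarrow{T}(u)^{-1} \;=\; \overleftarrow{E}(u)^{-1}\,\overleftarrow{D}(u)^{-1}\,\overleftarrow{F}(u)^{-1},
$$
a factorization of exactly the same type. The uniqueness of $(\text{upper})\times(\text{diagonal})\times(\text{lower})$ factorizations then forces
$$
\mathcal{J}\zeta_{M|N}(D)\mathcal{J} = \overleftarrow{D}^{-1},\qquad \mathcal{J}\zeta_{M|N}(F)\mathcal{J} = \overleftarrow{E}^{-1},\qquad \mathcal{J}\zeta_{M|N}(E)\mathcal{J} = \overleftarrow{F}^{-1}.
$$

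Reading off entries with the index reversals $a\mapsto n{+}1{-}a$, $i\mapsto\mu_a{+}1{-}i$, $j\mapsto\mu_b{+}1{-}j$ produces \eqref{zetad} immediately. For the adjacent case $b=a+1$ of \eqref{zetae}--\eqref{zetaf}, the $(c+1,c)$ block of $\overleftarrow{F}(u)^{-1}$ and the $(c,c+1)$ block of $\overleftarrow{E}(u)^{-1}$ collapse to the single $s=1$ term in \eqref{finverse} and \eqref{einverse}, yielding $-\overleftarrow{F}_{c+1,c}$ and $-\overleftarrow{E}_{c,c+1}$ respectively; this is the source of the required minus signs. For non-adjacent $b>a+1$ I would induct on $b-a$ using \eqref{terpow1}--\eqref{terpow2}: expand $E_{a,b;i,j}(u)$ as a supercommutator of $E_{a,b-1;i,k}(u)$ with $E_{b-1;k,j}^{(1)}$, apply $\zeta_{M|N}$, invoke the inductive hypothesis to rewrite both factors as $\overleftarrow{F}$'s, and finally repack the resulting supercommutator into a single $\overleftarrow{F}_{n+1-a,\,n+1-b}$ using the analogue of \eqref{ter} valid in $Y_{N|M}(\sd)$; the treatment of $F_{b,a}$ is entirely symmetric.

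The main obstacle will be the sign bookkeeping in the inductive step. One has to track in parallel the minus signs produced when $\zeta_{M|N}$ is applied to each adjacent $E_{c,c+1}$ or $F_{c+1,c}$, the factors $(-1)^{|k|_{b-1}}$ of \eqref{terpow1}--\eqref{terpow2} against their counterparts in $Y_{N|M}(\sd)$ (keeping in mind that restricted parities in $\so$ and $\sd$ are related by a shift of $1$ as in \eqref{parity}), and the Koszul sign arising in the supercommutator $[X,Y]$ when both arguments are odd. Over a field of positive characteristic, and especially when $p=2$, the shortcut $a=-a\Rightarrow a=0$ is unavailable, so each sign must be computed exactly rather than absorbed.
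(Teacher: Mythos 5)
Your matrix-level argument is correct and, for what it proves, cleaner than the paper's: the identity $\zeta_{M|N}(T(u))=\mathcal{J}\,\overleftarrow{T}(u)^{-1}\mathcal{J}$, the fact that conjugation by $\mathcal{J}$ exchanges upper and lower block triangularity relative to $\overleftarrow{\mu}$, and uniqueness of the (upper unitriangular)$\times$(diagonal)$\times$(lower unitriangular) factorization pin down all three images at once. This gives \eqref{zetad} and the adjacent cases of \eqref{zetae}--\eqref{zetaf} immediately, whereas the paper multiplies out \eqref{td}--\eqref{primel}, treats $a=1$ entrywise and then inducts on $a$.

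The gap is in your plan for the non-adjacent blocks, and it is not merely sign bookkeeping. Your own identity $\mathcal{J}\,\zeta_{M|N}(E(u))\,\mathcal{J}=\overleftarrow{F}(u)^{-1}$ already determines $\zeta_{M|N}(E_{a,b;i,j}(u))$ for every $a<b$: it is the appropriate entry of ${}^{\overleftarrow{\mu}}\widetilde F_{n+1-a,n+1-b}(u)$, which by \eqref{finverse} is the alternating sum $\sum_s(-1)^s\,\overleftarrow{F}\cdots\overleftarrow{F}$ and collapses to the single term $-\overleftarrow{F}_{n+1-a,n+1-b}(u)$ only when $b=a+1$. For $b>a+1$ the product terms with $s\geq 2$ do not vanish, so the right-hand sides of \eqref{zetae}--\eqref{zetaf} as stated are not what your (correct) first step produces, and no induction can reconcile the two. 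Concretely, the repacking step fails: for $n=3$, applying $\zeta_{M|N}$ to \eqref{terpow1} gives, up to sign, $[\overleftarrow{F}_{2}(u),\overleftarrow{F}_{1}^{(1)}]$ with the full power series sitting in the slot of the \emph{upper} adjacent block, whereas \eqref{terpow2} needs the degree-one coefficient there; taking the $u^{0}$-coefficient of \eqref{gff-1} shows $[\overleftarrow{F}_{1}^{(1)},\overleftarrow{F}_{2}(v)]=\pm\bigl(\sum_q\overleftarrow{F}_{2}(v)\overleftarrow{F}_{1}(v)-\overleftarrow{F}_{3,1}(v)\bigr)$, i.e.\ the primed element $\overleftarrow{F}'_{3,1}$ of Section 6 rather than $\overleftarrow{F}_{3,1}$. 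What your argument actually establishes is the corrected statement with ${}^{\overleftarrow{\mu}}\widetilde F$ and ${}^{\overleftarrow{\mu}}\widetilde E$ on the right-hand sides; the same discrepancy occurs in the paper's own proof, which derives the tilded version from \eqref{primeu}--\eqref{primel} and then records the untilded one. Since only the adjacent case (Corollary \ref{zetac}) is used later, nothing downstream breaks, but you should either restrict \eqref{zetae}--\eqref{zetaf} to $b=a+1$ or replace $-\overleftarrow{F}$ and $-\overleftarrow{E}$ by the corresponding blocks of $\overleftarrow{F}(u)^{-1}$ and $\overleftarrow{E}(u)^{-1}$ -- in which case your first step alone is a complete proof.
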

\begin{proof}
Multiplying  out the matrix products
\[ T(u)=F(u)D(u)E(u) \qquad \text{and} \qquad T(u)^{-1}=E(u)^{-1}D(u)^{\prime}F(u)^{-1},\]
we obtin the following matrix identities for any given composition $\mu$
\begin{eqnarray}
	{}^{\mu}T_{a,a}(u)&=&{}^{\mu}D_a(u)+\sum_{c<a}{}^{\mu}F_{a,c}(u){}^{\mu}D_c(u){}^{\mu}E_{c,a}(u),\label{td}\\
	{}^{\mu}T_{a,b}(u)&=&{}^{\mu}D_a(u){}^{\mu}E_{a,b}(u)+\sum_{c<a}{}^{\mu}F_{a,c}(u){}^{\mu}D_c(u){}^{\mu}E_{c,b}(u),\label{tu}\\
	{}^{\mu}T_{b,a}(u)&=&{}^{\mu}F_{b,a}(u){}^{\mu}D_a(u)+\sum_{c<a}{}^{\mu}F_{b,c}(u){}^{\mu}D_c(u){}^{\mu}E_{c,a}(u),\label{tl}
\end{eqnarray}
and
\begin{eqnarray}
	{}^{\mu}T^{\prime}_{a,a}(u)&=&{}^{\mu}D^{\prime}_a(u)+\sum_{c>a}{}^{\mu}\widetilde{E}_{a,c}(u){}^{\mu}D^{\prime}_c(u){}^{\mu}\widetilde{F}_{c,a}(u),\label{primed}\\ {}^{\mu}T_{a,b}^{\prime}(u)&=&{}^{\mu}\widetilde E_{a,b}(u){}^{\mu}D^{\prime}_b(u)+\sum_{c>b}{}^{\mu}\widetilde E_{a,c}(u){}^{\mu}D^{\prime}_c(u){}^{\mu}\widetilde F_{c,b}(u),
	\label{primeu}\\
	{}^{\mu}T_{b,a}^{\prime}(u)&=&{}^{\mu}D^{\prime}_b(u){}^{\mu}\widetilde F_{b,a}(u)+\sum_{c>b}{}^{\mu}\widetilde E_{b,c}(u){}^{\mu}D^{\prime}_c(u){}^{\mu}\widetilde F_{c,a}(u),\label{primel}
\end{eqnarray} 
for all $1{\leq}a{\leq}n$  and $1{\leq}a<b{\leq}n$. 
Here,
${}^{\mu}T_{a,b}^{\prime}(u)$ denotes the $\mu_a\times\mu_b$-matrices in the $(a,b)$-th block position of $T(u)^{-1}$.

Applying the map $\zeta_{M|N}$ to $^{\mu}D_1(u)={}^{\mu}T_{1,1}(u)$ obtained by the special case  when $a=1$ in \eqref{td},
it gives
\begin{align*}
&\zeta_{M|N}(D_{1;i,j}(u))\\
=&\zeta_{M|N}(t_{i,j})=\overleftarrow{t'}_{M+N+1-i,M+N+1-j}(u)=\overleftarrow{T'}_{n,n;\mu_1+1-i,\mu_1+1-j}\\
=&_{\eqref{primed}}\overleftarrow{D'}_{n,n;\mu_1+1-i,\mu_1+1-j}(u)=\overleftarrow{D'}_{n;\mu_1+1-i,\mu_1+1-j}(u),
\quad \forall~ 1{\leq}i,j{\leq}\mu_{1}.
\end{align*}

Also,
applying $\zeta_{M|N}$ to $ ^{\mu}D_1(u)\cdot {}^{\mu}E_{1,b}(u)={}^{\mu}T_{1,b}(u)$,
$^{\mu}F_{b,1}(u)\cdot {}^{\mu}D_1(u)={}^{\mu}T_{b,1}(u)$ given by the special case  when $a=1$ in \eqref{tu}-\eqref{tl},
we have
\begin{align*}
\sum_{k}\overleftarrow{t'}_{M+N+1-i,M+N+1-k}(u)\cdot \zeta_{M|N}(E_{1,b;k,j}(u))=\overleftarrow{t'}_{M+N+1-i,M+N+1-(\mu_1+\cdots+\mu_{b-1}+j)}(u),\\
		\sum_{k}\zeta_{M|N}(F_{b,1;j,k}(u))\cdot \overleftarrow{t'}_{M+N+1-k,M+N+1-i}(u)=\overleftarrow{t'}_{M+N+1-(\mu_1+\cdots+\mu_{b-1}+j,M+N+1-i)}(u),
\end{align*}
which equals to
\begin{align*}
	\sum_{k}\overleftarrow{D'}_{n;\mu_1+1-i,\mu_1+1-k}(u)\cdot \zeta_{M|N}(E_{1,b;k,j}(u))=\overleftarrow{T'}_{n,n+1-b;\mu_1+1-i,\mu_b+1-j},\\
	\sum_{k}\zeta_{M|N}(F_{b,1;j,k}(u))\cdot \overleftarrow{D'}_{n;\mu_1+1-k,\mu_1+1-i}(u)=\overleftarrow{T'}_{n+1-b,n;\mu_b+1-j,\mu_1+1-i},
\end{align*}
for $1{\leq}i{\leq}\mu_1$ and $1{\leq}j{\leq}\mu_b$.
Associated with ${}^{\overleftarrow{\mu}}T_{a,b}^{\prime}(u)={}^{\overleftarrow{\mu}}\widetilde E_{a,b}(u){}^{\overleftarrow{\mu}}D^{\prime}_b(u)$, and ${}^{\overleftarrow{\mu}}T_{b,a}^{\prime}(u)={}^{\overleftarrow{\mu}}D^{\prime}_b(u){}^{\mu}\widetilde F_{b,a}(u)$ obtained by the case of when $a=n$ in \eqref{primeu}-\eqref{primel} in $Y_{N|M}(\sd)$, respectively,
then we have immediately that the cases when $a=1$ of the above more general relations \eqref{zetad}--\eqref{zetaf},
which can be easily derived simultaneously by induction on $a$ in a simlilar way with the case of $a=1$.
\end{proof}

Associated with \eqref{einverse} and \eqref{finverse}, 
their relations with the \textit{parabolic generators} \{$D_{a;i,j}^{(r)};D^{\prime(r)}_{a;i,j}$\}, \{$E_{a;i,j}^{(r)}$\}, \{$F_{a;i,j}^{(r)}$\} in $\Ymn(\so)=Y_{\mu}(\so)$ under the map $\zeta_{M|N}$
can be obtained directly by the special cases when $b=a+1$ in proposition \ref{swap},
which is given the following corollary \ref{zetac}.
\begin{corollary}\label{zetac}
For any fixed $\so$ and the shape $\mu$,
we have\begin{eqnarray}
	\zeta_{M|N}\big(D_{a;i,j}(u)\big)&=&\overleftarrow{D}^{\prime}_{n+1-a;\mu_{a}+1-i,\mu_{a}+1-j}(u), \qquad\forall 1{\leq}a{\leq}n, \label{zd} \\
	\zeta_{M|N}\big(E_{a;i,j}(u)\big)&=&-\overleftarrow{F}_{n-a;\mu_{a}+1-i,\mu_{a+1}+1-j}(u), \quad\forall 1{\leq}a{\leq}n-1, \label{sze}\\
	\zeta_{M|N}\big(F_{a;j,i}(u)\big)&=&-\overleftarrow{E}_{n-a;\mu_{a+1}+1-j,\mu_{a}+1-i}(u), \quad\forall 1{\leq}a{\leq}n-1\label{szf}.
\end{eqnarray}
for all admissible $i,j$.
Note that those on the right hand side are in $Y_{N|M}(\sd)=Y_{\overleftarrow{\mu}}(\sd)$.
\end{corollary}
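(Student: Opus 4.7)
The plan is to obtain Corollary~\ref{zetac} as a direct specialization of Proposition~\ref{swap} to the two-block case $b = a+1$, combined with the notational conventions introduced in Section~\ref{section: parabolic}. Recall that the parabolic generators were defined by $E_{a;i,j}(u) := E_{a,a+1;i,j}(u)$ and $F_{a;j,i}(u) := F_{a+1,a;j,i}(u)$, and analogous single-index conventions $\overleftarrow{E}_{c;i,j}(u) := \overleftarrow{E}_{c,c+1;i,j}(u)$, $\overleftarrow{F}_{c;j,i}(u) := \overleftarrow{F}_{c+1,c;j,i}(u)$ are used in $Y_{N|M}(\sd) = Y_{\overleftarrow{\mu}}(\sd)$, where $\overleftarrow{\mu} = (\mu_n,\mu_{n-1},\ldots,\mu_1)$ reverses the composition.

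First, equation \eqref{zd} is identical to \eqref{zetad} in Proposition~\ref{swap} applied to the diagonal block $a$, so nothing new is required. Next, for \eqref{sze}, substitute $b = a+1$ into \eqref{zetae}; the right-hand side becomes
\[
-\overleftarrow{F}_{n+1-a,\, n-a;\, \mu_a+1-i,\, \mu_{a+1}+1-j}(u),
\]
and the single-index convention for $\overleftarrow{F}$ collapses the two block indices $(n+1-a, n-a)$ into the scalar index $n-a$, giving exactly \eqref{sze}. The argument for \eqref{szf} is entirely parallel: set $b = a+1$ in \eqref{zetaf}, yielding $-\overleftarrow{E}_{n+1-b,n+1-a;\,\cdot,\cdot}(u) = -\overleftarrow{E}_{n-a,n+1-a;\,\cdot,\cdot}(u)$, which under the single-index convention on $\overleftarrow{E}$ becomes $-\overleftarrow{E}_{n-a;\mu_{a+1}+1-j,\mu_a+1-i}(u)$.

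Since the hard work (inverting $T(u)$, tracking the matrix identities \eqref{td}--\eqref{primel}, and the induction on $a$) was already carried out in the proof of Proposition~\ref{swap}, no additional obstacle arises here. The only point requiring care is the index bookkeeping induced by the reversal $\overleftarrow{\mu}$: the $a$-th block of $\mu$ corresponds to the $(n+1-a)$-th block of $\overleftarrow{\mu}$, which is why the off-diagonal indices in \eqref{sze}--\eqref{szf} become $n-a$ rather than $a$ and why the row/column indices flip to $\mu_a+1-i$ and $\mu_{a+1}+1-j$.
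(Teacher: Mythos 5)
Your proposal is correct and matches the paper's own treatment: the corollary is obtained precisely by setting $b=a+1$ in \eqref{zetad}--\eqref{zetaf} and collapsing the adjacent block indices $(n+1-a,\,n-a)$ into the single index $n-a$ via the conventions $E_{a}:=E_{a,a+1}$ and $F_{a}:=F_{a+1,a}$. Your index bookkeeping (including the flip to $\mu_a+1-i$, $\mu_{a+1}+1-j$ under the reversal $\overleftarrow{\mu}$) is accurate.
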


Next,
we analyze the \textit{shift map} $\psi_{p|q}$ for any $p,q\in\mathbb{Z}_{\ge0}$.
Let $\so_1$ be an arbitrary $0^p1^q$-sequence.
Starting with any shape $\mu$ and the sequence $\so$,
we have 
\begin{proposition}\label{shift}		
Let $\varphi_{p|q}:Y_{M|N}(\so)\rightarrow Y_{p+M|q+N}(\so_1\so)$
be the injective homomorphism sending $t_{i,j}^{(r)}$ to $t_{p+q+i,p+q+j}^{(r)}$. 
The map $\psi_{p|q}:\Ymn(\so)\rightarrow Y_{p+M|q+N}(\so_1\so)$ defined by
		\[
		\psi_{p|q}=\omega_{p+M|q+N}\circ\varphi_{p|q}\circ\omega_{M|N},
		\]
		is an injective homomorphism, 
		and for any $1{\leq}i,j{\leq}M+N$
\begin{align}
	\psi_{p|q}\big(t_{ij}(u)\big)&=
	\left| \begin{array}{cccc} t_{1,1}(u) &\cdots &t_{1,p+q}(u) &t_{1, p+q+j}(u)\\
		\vdots &\ddots &\vdots &\vdots \\
		t_{p+q,1}(u) &\cdots &t_{p+q,p+q}(u) &t_{p+q, p+q+j}(u)\\
		t_{p+q+i, 1}(u) &\cdots &t_{p+q+i,p+q}(u) &\boxed{t_{p+q+i, p+q+j}(u)}
	\end{array} \right|.\label{pqmap}
\end{align}
\end{proposition}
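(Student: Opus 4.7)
The plan is to first verify that the auxiliary map $\varphi_{p|q}$ is a well-defined injective homomorphism; then the claim for $\psi_{p|q}$ follows automatically, since $\omega_{M|N}$ and $\omega_{p+M|q+N}$ are automorphisms by the preceding proposition. Well-definedness of $\varphi_{p|q}$ reduces to checking that the RTT relations \eqref{RTT relations} are preserved under the index shift $t^{(r)}_{i,j} \mapsto t^{(r)}_{p+q+i,\,p+q+j}$. This is immediate: by construction of the concatenation $\so_1\so$, the parity of the index $p+q+i$ relative to $\so_1\so$ coincides with the parity of $i$ relative to $\so$, so every sign in \eqref{RTT relations} is unchanged and every index on the right-hand side shifts uniformly by $p+q$. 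Injectivity follows from the PBW theorem: the shifted generators $\{t^{(r)}_{p+q+i,\,p+q+j}\}$ form a subset of a PBW basis of $Y_{p+M|q+N}(\so_1\so)$, so ordered supermonomials in them are linearly independent.

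For the quasideterminant formula \eqref{pqmap}, I would chase $t_{ij}(u)$ through $\psi_{p|q} = \omega_{p+M|q+N} \circ \varphi_{p|q} \circ \omega_{M|N}$ using the block structure of $\tilde T(u) := T_{p+M|q+N}(u)$ with respect to the composition $(p+q,\, M+N)$. Write
\[
\tilde T(u) = \begin{pmatrix} A(u) & B(u) \\ C(u) & D(u) \end{pmatrix},
\]
so that the lower-right block $D(u)$ is precisely the image $\varphi_{p|q}(T(u))$ entrywise. First, $\omega_{M|N}(t_{ij}(u)) = t'_{ij}(-u) = (T(-u)^{-1})_{ij}$. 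Since $\varphi_{p|q}$ is an algebra homomorphism sending the matrix $T(u)$ entrywise to $D(u)$, it sends $T(-u)^{-1}$ entrywise to $D(-u)^{-1}$; hence $\varphi_{p|q}(t'_{ij}(-u)) = (D(-u)^{-1})_{ij}$. Second, from $\omega_{p+M|q+N}(\tilde t_{ab}(u)) = \tilde t'_{ab}(-u)$ one deduces $\omega_{p+M|q+N}(\tilde t_{ab}(-u)) = \tilde t'_{ab}(u)$, so $\omega_{p+M|q+N}$ applied entrywise to $\tilde T(-u)$ yields $\tilde T(u)^{-1}$, and in particular sends the block $D(-u)$ to the lower-right block of $\tilde T(u)^{-1}$. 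Because $\omega_{p+M|q+N}$ is an algebra homomorphism, it respects the matrix identity $D(-u) \cdot D(-u)^{-1} = I$, giving $\omega_{p+M|q+N}(D(-u)^{-1}) = \bigl(\omega_{p+M|q+N}(D(-u))\bigr)^{-1}$. By the standard block-matrix inversion formula, which holds over any unital ring, the lower-right block of $\tilde T(u)^{-1}$ equals $\bigl(D(u) - C(u) A(u)^{-1} B(u)\bigr)^{-1}$. Inverting back produces $D(u) - C(u) A(u)^{-1} B(u)$, whose $(i,j)$-entry is exactly the quasideterminant on the right-hand side of \eqref{pqmap}; equivalently, this is the $(i,j)$-entry of the block $D_2(u)$ from the Gauss decomposition \eqref{quasid} of $\tilde T(u)$ with respect to $(p+q,\,M+N)$.

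The main technical point is ensuring that the entrywise action of $\omega$ commutes with matrix inversion in the non-commutative, graded setting. This is not a real obstacle: because $\omega$ is a genuine automorphism (not an anti-automorphism), it distributes over sums and products of entries, so $\omega(XY) = \omega(X)\omega(Y)$ for any matrices of formal power series with entries in the Yangian, and therefore $\omega(X^{-1}) = \omega(X)^{-1}$ whenever $X$ is invertible. All $\ZZ_2$-sign issues are absorbed into the parities of the generators $t^{(r)}_{i,j}$, which are preserved by both $\varphi_{p|q}$ (by the parity-matching argument above) and by $\omega$. Once this bookkeeping is in hand, the argument is a direct super-generalization of the one in \cite{BK1, Peng16} and goes through verbatim in positive characteristic, since no division by characteristic primes is involved.
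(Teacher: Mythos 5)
Your proposal is correct and follows essentially the same route as the paper: decompose $T(u)$ of $Y_{p+M|q+N}$ into blocks $A,B,C,D$ with respect to $(p+q,\,M+N)$, chase $t_{ij}(u)$ through $\omega_{p+M|q+N}\circ\varphi_{p|q}\circ\omega_{M|N}$, and identify the result as the $(i,j)$-entry of the Schur complement $D(u)-C(u)A(u)^{-1}B(u)$ via the standard block-inversion identities. Your intermediate bookkeeping is in fact slightly more careful than the paper's displayed chain (which writes $\widetilde{D}(-u)$ where $D(-u)^{-1}$ is meant), and your added verification that $\varphi_{p|q}$ preserves the RTT relations and is injective via PBW is a harmless supplement to what the paper takes as given.
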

\begin{proof}
We will demonstrate that the proof in \cite{BK1} also works perfectly for the modular super case.
Let $T(u)$ denote the matrix $\left(t_{i,j(u)}\right)_{1{\leq}i,j{\leq}M+N+p+q}$ with entries in the modular super Yangian $Y_{p+M|q+N}(\so_1\so)[[u^{-1}]]$ as usual,
and let $\widetilde{T}(-u):=T(-u)^{-1}$.
We write the matrix $\widetilde{T}(-u), T(-u)$ in the following block form
\begin{equation}\label{blockpq}
	\left(\begin{array}{cc}
	\widetilde{A}(-u)&\widetilde{B}(-u)\\\widetilde{C}(-u)&\widetilde{D}(-u)
\end{array}\right):=\left(\begin{array}{cc}
A(-u)&B(-u)\\C(-u)&D(-u)
\end{array}\right)^{-1}
\end{equation}
where $A(-u), B(-u), C(-u), D(-u)$ are 
$(p+q)\times (p+q)$, $(p+q)\times (M+N)$,
$(M+N)\times (p+q)$, $(M+N)\times (M+N)$
matrices,
respectively.
By block multiplication and the fact that $A(-u), D(-u)$ are inverse,
it is easy to obtain the following classical identities
\begin{align*}
\widetilde{A}(-u)&=\left(A(-u)-B(-u)D(-u)^{-1}C(-u)\right)^{-1},\\
\widetilde{B}(-u)&=-A(-u)^{-1}B(-u)\left(D(-u)-C(-u)A(-u)^{-1}B(-u)\right)^{-1},\\
\widetilde{C}(-u)&=-D(-u)^{-1}C(-u)\left(A(-u)-B(-u)D(-u)^{-1}C(-u)\right)^{-1},\\
\widetilde{D}(-u)&=\left(D(-u)-C(-u)A(-u)^{-1}B(-u)\right)^{-1}.
\end{align*}
Now,
by the definition of the map $\psi_{p|q}$,
we have
\begin{equation}\label{pqmap'}
\begin{array}{cccccccc}
\psi_{p|q}:&\Ymn(\so)&\rightarrow&\Ymn(\so)&\rightarrow& Y_{p+M|q+N}(\so_1\so)&\rightarrow&Y_{p+M|q+N}(\so_1\so)\\
&T(u)&\rightarrow&T(-u)^{-1}&\rightarrow&\widetilde{D}(-u)&\rightarrow&\widetilde{D}(u)^{-1}
\end{array}
\end{equation}
Thus it maps the $(M+N)\times(M+N)$ matrix $T(u)$ of $\Ymn(\so)$ to the $(M+N)\times(M+N)$ matrix $D(u)-C(u)A(u)^{-1}B(u)$ in $Y_{p+M|q+N}(\so_1\so)$.
Then the proposition follows from this on the following computation of $ij$-erntries:
\begin{align*}
&\psi_{p|q}(t_{ij}(u))=\left(D(u)-C(u)A(u)^{-1}B(u)\right)_{ij}=\left(D(u)\right)_{ij}-\left(C(u)A(u)^{-1}B(u)\right)_{ij}\\
=&t_{p+q+i,p+q+j}(u)-\sum\limits_{k,l=1}^{p+q}t_{p+q+i,k}(u)A(u)^{-1}_{k,l}t_{l,p+q+j}(u)\\
=&t_{p+q+i,p+q+j}(u){-}\left(t_{p+q+i,1}(u),\cdots,t_{p+q+i,p+q}(u)\right)A(u)^{{-}1}\left(t_{1,p+q+j}(u),\cdots,t_{p+q,p+q+j}(u)\right)^{t}\\
=&\mbox{the~quasideterminant~of ~the~right~side~of~}\eqref{pqmap}.
\end{align*}
\end{proof}

The relations in \eqref{pqmap'} tell us that the \textit{shift map} $\psi_{p|q}$ embeds the matrix $T(u)$ of $\Ymn(\so)$ into the southeastern corner of  the larger matrix of $Y_{p+M|q+N}(\so_1\so)$. 
Also,
the image of the map $\psi_{p|q}$ in \eqref{pqmap} means that it depends only on $p+q$,
so we may simply write $\psi_{p|q}=\psi_L$ for $L=p+q$ when appropriate.
\begin{corollary}
Under any composition $\mu=(\mu_1,\cdots,\mu_n)$ of $M+N$,
then for any $a$ with $2{\leq}a{\leq}n$ and $L=\mu_1+\cdots+\mu_{a-1}$,
we have
\begin{equation}\label{pqele}
\psi_L(D_{1;i,j}(u))=D_{a;i,j}(u),
\psi_L(E_{1;i,j}(u))=E_{a;i,j}(u),
\psi_L(F_{1;i,j}(u))=F_{a;i,j}(u),
\end{equation}
which is also equivalent to
 \begin{equation}\label{pqele1}
 	\psi_L(^{\mu}D_{1}(u))={}^{\mu}D_{a}(u),\quad
 	\psi_L(^{\mu}E_{1}(u))={}^{\mu}E_{a}(u),\quad
 	\psi_L(^{\mu}F_{1}(u))={}^{\mu}F_{a}(u).
 \end{equation}
\end{corollary}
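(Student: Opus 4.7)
The strategy is to recognize both sides of (\ref{pqele1}) as arising from Gauss decompositions of the \emph{same} matrix in $\Ymn(\so)$, and then invoke uniqueness of Gauss decomposition. Concretely, view the domain of $\psi_L$ as the super Yangian with the truncated composition $(\mu_a,\mu_{a+1},\ldots,\mu_n)$ and truncated $01$-sequence $\so_a\cdots\so_n$, so that the source-side symbols ${}^\mu D_1(u)$, ${}^\mu E_1(u)$, ${}^\mu F_1(u)$ refer to the first diagonal, superdiagonal, and subdiagonal blocks of the Gauss decomposition in the source. It suffices to establish the matrix form (\ref{pqele1}), from which (\ref{pqele}) follows by passing to entries.

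The first step is to rewrite $\psi_L(T(u)_{\mathrm{source}})$ in terms of the \emph{full} target Gauss decomposition $T(u)=F(u)D(u)E(u)$. By the proof of Proposition \ref{shift}, $\psi_L(T(u)_{\mathrm{source}})$ equals the Schur complement $D(u)-C(u)A(u)^{-1}B(u)$ with respect to the coarse $2\times 2$ block split of the target at position $L$. Partitioning $F$, $D$, $E$ as $2\times 2$ block matrices along the same split, a direct multiplication yields
\[
A=F_{11}D_{g,11}E_{11},\quad B=F_{11}D_{g,11}E_{12},\quad C=F_{21}D_{g,11}E_{11},
\]
and $D=F_{21}D_{g,11}E_{12}+F_{22}D_{g,22}E_{22}$, from which the cancellation $CA^{-1}=F_{21}F_{11}^{-1}$ gives
\[
\psi_L(T(u)_{\mathrm{source}}) \;=\; F_{22}(u)\,D_{g,22}(u)\,E_{22}(u).
\]

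The second step is uniqueness. With respect to the refined block structure $(\mu_a,\ldots,\mu_n)$, the factor $F_{22}$ (resp.\ $E_{22}$) is block lower- (resp.\ upper-) unitriangular, and $D_{g,22}=\mathrm{diag}({}^\mu D_a(u),\ldots,{}^\mu D_n(u))$, with off-diagonal blocks consisting of the target ${}^\mu F_{b,c}(u)$'s and ${}^\mu E_{c,b}(u)$'s for $a\le c<b\le n$. On the other hand, applying the algebra homomorphism $\psi_L$ to the source Gauss decomposition $T(u)_{\mathrm{source}}=F^{\mathrm{src}}(u)D_g^{\mathrm{src}}(u)E^{\mathrm{src}}(u)$ produces a block unitriangular / block diagonal / block unitriangular factorisation of $\psi_L(T(u)_{\mathrm{source}})$ with respect to exactly the same refined composition. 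Uniqueness of Gauss decomposition (which holds since all diagonal blocks are invertible as formal power series with leading term $I$) then forces the two factorisations to agree block by block. Reading off the first source block in each factor yields (\ref{pqele1}), and hence (\ref{pqele}).

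There is no essential characteristic $p$ or super-sign difficulty here, since both the Schur-complement identity and the uniqueness of Gauss decomposition are purely formal consequences of associativity and invertibility. The main obstacle will therefore be bookkeeping: pinning down the identification of the source composition and sequence, and verifying that the first blocks of the source Gauss decomposition really match in size and position with the $a$-th blocks of the target under this identification. Once that setup is in place, the two steps above deliver the three identities uniformly.
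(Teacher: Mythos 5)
Your proposal is correct, and it reaches the conclusion by a route that differs from the paper's in its final identification step. Both arguments start from the same place: the formula \eqref{pqmap} from the proof of Proposition \ref{shift}, which exhibits $\psi_L\big(T(u)_{\mathrm{source}}\big)$ as the Schur complement $D(u)-C(u)A(u)^{-1}B(u)$ of the coarse $2\times 2$ split of the target $T(u)$ at position $L$. The paper then finishes entrywise: it recognizes the $(i,j)$-entry of that Schur complement as literally the quasideterminant \eqref{quasid} defining $D_{a;i,j}(u)$, and disposes of the $E$'s and $F$'s ``similarly'' via \eqref{quasie} and \eqref{quasif}. You instead substitute the target Gauss decomposition $T=FDE$ into the Schur complement, verify the cancellation $CA^{-1}B=F_{21}D_{g,11}E_{12}$ (which is valid in the noncommutative setting since $(F_{11}D_{g,11}E_{11})^{-1}=E_{11}^{-1}D_{g,11}^{-1}F_{11}^{-1}$), obtain $\psi_L(T_{\mathrm{source}})=F_{22}D_{g,22}E_{22}$, and conclude by uniqueness of the block LDU factorisation. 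This buys you something the paper's proof does not state: you get $\psi_L$ of \emph{all} source blocks at once, i.e.\ $\psi_L$ carries every ${}^{\mathrm{src}}E_{b,c}(u)$ and ${}^{\mathrm{src}}F_{c,b}(u)$ to the corresponding shifted block of the target, not merely the first superdiagonal and subdiagonal ones, and it treats $D$, $E$, $F$ uniformly without invoking the explicit quasideterminant formulas \eqref{quasie}--\eqref{quasif}. The uniqueness argument you rely on is legitimate over the (noncommutative) coefficient ring because all three factors have invertible diagonal blocks (constant term $I$ in $u^{-1}$), and $\psi_L$ applied entrywise commutes with block matrix multiplication since it is an algebra homomorphism fixing $0$ and $\delta_{ij}$. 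Your closing remark is also accurate: neither the characteristic of $\kk$ nor the super signs enter, exactly as in the paper's computation.
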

\begin{proof}
Taking $D_{a}(u)$ as an example to prove it step by step.
\begin{align*}
&\psi_L(D_{1;i,j}(u))=\psi_L({}^{\mu}T_{1,1}(u)_{i,j})=\psi_L(T_{1,1;i,j}(u)=\psi_L(t_{i,j}(u))\\
=&_{\eqref{pqmap}}t_{L+i,L+j}(u){-}\left(t_{L+i,1}(u),\cdots,t_{L+i,L}(u)\right)\left(\begin{array}{ccc}
 t_{1,1}(u) &\cdots &t_{1,L}(u)\\
\vdots &\ddots &\vdots \\
t_{L,1}(u) &\cdots &t_{L,L}(u)
\end{array}\right)^{-1}
\left(\begin{array}{c}
t_{1,L+j}\\\vdots\\t_{L,L+j}
\end{array}\right)
\\
=&T_{a,a;i,j}(u){-}\left(T_{a,1;i,1}(u),\cdots,T_{a,1;i,L}(u)\right)\left(
\begin{array}{ccc}
	{^\mu}T_{1,1}(u) & \cdots &{^\mu}T_{1,a-1}(u)\\
	\vdots & \ddots &\vdots\\
	{^\mu}T_{a-1,1}(u) & \cdots & {^\mu}T_{a-1,a-1}(u)
\end{array}
\right)^{-1}
\left(\begin{array}{c}
	T_{1,a;1,j}\\\vdots\\T_{1,a;L,j}
\end{array}\right)\\
=&\mbox{the~}ij-\mbox{entries~of~the~following~matrix}\\
&{}^{\mu}T_{a,a}(u){-}\left({^\mu}T_{a,1}(u), \cdots, {^\mu}T_{a,a-1}(u)\right)\left(
\begin{array}{ccc}
	{^\mu}T_{1,1}(u) & \cdots &{^\mu}T_{1,a-1}(u)\\
	\vdots & \ddots &\vdots\\
	{^\mu}T_{a-1,1}(u) & \cdots & {^\mu}T_{a-1,a-1}(u)
\end{array}
\right)^{{-}1}
\left(\begin{array}{c}
	{^\mu}T_{1,a}(u)\\\vdots\\{^\mu}T_{a-1,a}(u)
\end{array}\right)\\
=&_{\eqref{quasid}}D_{a;i,j}(u).
\end{align*}
 $E_{a}(u)$, $F_{a}(u)$ can be obtained similarly by \eqref{quasie}, \eqref{quasif}, respectively.
\end{proof}

Since $\omega_{M|N}$ is an involution,
and then it maps $T(u)^{-1}$ to $T(-u)$,
then associated to the proof of Proposition \ref{shift},
it follows that the map $\psi_L$ sends the element $t_{i,j}^{\prime}(u)$ in $\Ymn(\so)$ to $t_{L+i,L+j}^{\prime}(u)$ in $Y_{p+M|q+N}(\so_1\so)$. 
It means that the image of $\psi_{L}\big(Y_{M|N}(\so)\big)$ in $Y_{p+M|q+N}(\so_1\so)$ is generated by the following elements 
\[
\left\{t_{L+i,L+j}^{\prime(r)}\in Y_{p+M|q+N}(\so_1\so)\,|\,1{\leq}i,j{\leq}M+N, r\geq 0\right\}.
\] 

The block form \eqref{blockpq} means that if we pick any element $t_{i,j}^{(r)}$ in $A(u)$, the northwestern $(p+q)\times (p+q)$ corner of $T(u)$ in $Y_{p+M|q+N}[[u^{-1}]]$, then their indices will never overlap with those of $\psi_{L}\big(Y_{M|N}\big)$, that are in the $\widetilde{D}(u)$, southeastern $(M+N)\times(M+N)$ corner of $T(u)^{-1}$. As a result of equation (\ref{commurelation}), they supercommute.
Obviously,
the elements in the  $(p+q)\times (p+q)$ corner $A(u)$ generate a sualgebra which is isomorphic to $Y_{p|q}(\so_1)$.
\begin{corollary}\label{cor:commu}
	For any $p,q\in\mathbb{Z}_{\ge0}$
and an arbitrary $0^p1^q$-sequence $\so_1$,
we have the subalgebras $Y_{p|q}(\so_1)$ and $\psi_{L}\big(Y_{M|N}(\so)\big)$ supercommute with each other, in the Yangian $Y_{p+M|q+N}(\so_1\so)$,
where $L=p+q$.
\end{corollary}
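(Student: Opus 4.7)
My plan is to exhibit explicit generating sets for each of the two subalgebras and then to apply the mixed commutation relation \eqref{commurelation} to show that the generators supercommute pairwise. The key observation is that the two generating sets live in disjoint ranges of RTT indices, so both Kronecker deltas on the right-hand side of \eqref{commurelation} vanish identically, forcing the generators to supercommute.

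First I would identify $Y_{p|q}(\so_1)$ inside $Y_{p+M|q+N}(\so_1\so)$ as the subalgebra generated by $\{t_{i,j}^{(r)} : 1\le i,j \le L,\ r\ge 1\}$ with $L=p+q$. Because the parities of the indices $i\in[1,L]$ in $\so_1\so$ coincide with their parities in $\so_1$, the RTT relations \eqref{RTT relations} restricted to this index range are exactly the defining relations of $Y_{p|q}(\so_1)$. For the second subalgebra, the paragraph preceding the corollary has already observed that $\psi_L\big(Y_{M|N}(\so)\big)$ is generated by $\{t'^{(r)}_{L+i,L+j} : 1\le i,j\le M+N,\ r\ge 1\}$; this follows from tracing $\psi_L=\omega_{p+M|q+N}\circ\varphi_{p|q}\circ\omega_{M|N}$ on the series $t'_{i,j}(u)$ and using that $\omega$ is involutive, which gives $\omega_{M|N}\big(t'_{i,j}(u)\big)=t_{i,j}(-u)$ and hence $\psi_L\big(t'_{i,j}(u)\big)=t'_{L+i,L+j}(u)$.

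The final step is a direct application of \eqref{commurelation} to the pair $t_{i,j}(u)$ with $1\le i,j\le L$ and $t'_{k,l}(v)$ with $L<k,l\le L+M+N$. The inequalities $j\le L<k$ and $i\le L<l$ force $\delta_{k,j}=\delta_{i,l}=0$, so the right-hand side of \eqref{commurelation} is zero, which gives $(u-v)[t_{i,j}(u),t'_{k,l}(v)]=0$; matching coefficients of $u^{-r}v^{-s}$ then yields $[t_{i,j}^{(r)},t'^{(s)}_{k,l}]=0$ for all $r,s\ge 0$. Since the two subalgebras are generated by elements that pairwise supercommute, the subalgebras themselves supercommute in $Y_{p+M|q+N}(\so_1\so)$. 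I do not anticipate any substantive obstacle here; the argument is essentially a disjointness-of-indices bookkeeping check once the two generating sets are in hand, the only mildly technical point being the short verification that $\psi_L$ carries $t'_{i,j}(u)$ to $t'_{L+i,L+j}(u)$.
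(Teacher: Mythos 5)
Your proposal is correct and follows essentially the same route as the paper: identify $Y_{p|q}(\so_1)$ with the subalgebra generated by the northwestern-corner generators $t_{i,j}^{(r)}$ ($1\le i,j\le L$), identify $\psi_L\big(Y_{M|N}(\so)\big)$ as generated by $t^{\prime(r)}_{L+i,L+j}$, and observe that the disjointness of the index ranges kills both Kronecker deltas in \eqref{commurelation}. The only point the paper states more explicitly is that the generating set for the image comes from $\omega_{M|N}$ being an involution together with the block decomposition \eqref{blockpq}, which you also verify, so no gap remains.
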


Thence,
$D_{a;i,j}$ and $D_{b;h,k}$ are super-commutative when $|a-b|>0$.
Also the following relations hold if $|a-b|>1$
$$[E_{a;i,j}(u),E_{b;h,k}(v)]=0,\, |a-b|>1.$$

Moreover,
by Eqs. \eqref{pqmap} and \eqref{pqele},
the parities of the parabolic generators are obtained as follows:
\begin{eqnarray}
	\label{pad}\text{ parity of } D_{a;i,j}^{(r)}&=&|i|_a+|j|_a \,\,\text{(mod 2)},\\
	\label{pae}\text{ parity of } E_{b;h,k}^{(s)}&=&|h|_{b}+|k|_{b+1} \,\,\text{(mod 2)},\\
	\label{paf}\text{ parity of } F_{b;l,t}^{(s)}&=&|l|_{b+1}+|t|_{b} \,\,\text{(mod 2)}.
\end{eqnarray}

\begin{proposition}\label{dd0}
The following super-commutative relations hold:
\begin{align*}
[D_{a;i,j}(u),E_{b;h,k}(v)]=0,\, [D_{a;i,j}(u),F_{b;h,k}(v)]=0,\,&\mbox{if}~ b-a\geq 1, \mbox{or}~a-b>1;\\
[F_{a;i,j}(u),F_{b;h,k}(v)]=0,\, [E_{a;i,j}(u),E_{b;h,k}(v)]=0,\, &\mbox{if}~ |a-b|>1.
\end{align*}
Also,  
the relations among the elements
$\{D_{a;i,j}^{(r)},D_{a;i,j}^{\prime (r)}\}$ for all $r\geq 0$, ${1{\leq}i,j{\leq}\mu_a}$, $1{\leq}a{\leq}n$
are given by
\[
D_{a;i,j}^{(0)}=\delta_{ij},\]
\[
{\displaystyle \sum_{t=0}^{r}D_{a;i,p}^{(t)}D_{a;p,j}^{\prime (r-t)}=\delta_{r0}\delta_{ij} },
\]
\begin{align*}
	[D_{a;i,j}^{(r)},D_{b;h,k}^{(s)}]=&
	\delta_{ab}(-1)^{|i|_a|j|_a+|i|_a|h|_a+|j|_a|h|_a}\\
	&\times\sum_{t=0}^{min(r,s)-1}\big(D_{a;h,j}^{(t)}D_{a;i,k}^{(r+s-1-t)}
	-D_{a;h,j}^{(r+s-1-t)}D_{a;i,k}^{(t)}\big).\\
\end{align*}	
\end{proposition}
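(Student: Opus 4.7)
The plan is to reduce every claim either to the defining RTT relations \eqref{RTT relations} inside a smaller Yangian, or to an immediate consequence of the Gauss decomposition \eqref{gaussdec}, by means of the shift map $\psi_L$ (Proposition \ref{shift}) and Corollary \ref{cor:commu}. For each $a\in[1,n]$ set $L_a:=\mu_1+\cdots+\mu_a$ and write $\so=\so^{\leq a}\so^{>a}$ with $\so^{\leq a}$ the first $L_a$ digits. Let $Y^{NW}_L\subseteq\Ymn(\so)$ denote the subalgebra generated by $\{t^{(r)}_{k,l}\mid k,l\leq L\}$, which is itself a modular super Yangian corresponding to the initial $L$ digits of $\so$. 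The quasideterminantal formulas \eqref{quasid}, \eqref{quasie} and \eqref{quasif} show that
\[
D_{c;i,j}\in Y^{NW}_{L_c},\qquad E_{c;h,k},\;F_{c;h,k}\in Y^{NW}_{L_{c+1}},
\]
since the $t$-entries appearing in these quasideterminants all lie within the northwestern $L_c\times L_c$, respectively $L_{c+1}\times L_{c+1}$, block of $T(u)$. Dually, iterating \eqref{pqele} on the truncated composition $(\mu_{a+1},\ldots,\mu_n)$ of $M+N-L_a$ places
\[
D_c,\;E_c,\;F_c\in\psi_{L_a}\bigl(Y_{M+N-L_a}(\so^{>a})\bigr)\qquad\text{for every }c\geq a+1.
\]

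Whenever two parabolic generators $X,Y$ satisfy $X\in Y^{NW}_{L_a}$ and $Y\in\psi_{L_a}(\cdots)$ for some common $a$, Corollary \ref{cor:commu} immediately gives $[X,Y]=0$. All vanishings in item (1) follow by choosing the appropriate separator: for $[D_a,E_b]$ and $[D_a,F_b]$ with $b\geq a+1$ take $L=L_a$; for $a\geq b+2$ take $L=L_{a-1}$, noting $b+1\leq a-1$ so that $E_b,F_b\in Y^{NW}_{L_{b+1}}\subseteq Y^{NW}_{L_{a-1}}$ while $D_a\in\psi_{L_{a-1}}(\cdots)$; and for $[E_a,E_b]$ or $[F_a,F_b]$ with $a<b$ and $b-a\geq 2$ take $L=L_{b-1}$, so that $E_a,F_a\in Y^{NW}_{L_{a+1}}\subseteq Y^{NW}_{L_{b-1}}$ while $E_b,F_b\in\psi_{L_{b-1}}(\cdots)$.

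For item (2), the identity $D^{(0)}_{a;i,j}=\delta_{ij}$ comes from evaluating \eqref{gaussdec} at $u=\infty$, where $T(\infty)=I$ forces $D(\infty)=I$; the second identity is simply the coefficient expansion of $D_a(u)D_a(u)^{-1}=I_{\mu_a}$. For the commutator, the case $a\neq b$ is covered by the preceding step, which accounts for the $\delta_{ab}$ factor. For $a=b$, combining \eqref{pqele} with \eqref{Dtident} identifies $D_{a;i,j}^{(r)}=\psi_{L_{a-1}}(t_{i,j}^{(r)})$, where the $t_{i,j}^{(r)}$ on the right live in a smaller super Yangian whose generators carry parities $|i|_a,|j|_a$ and satisfy the RTT relation \eqref{RTT relations}. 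Applying the injective algebra homomorphism $\psi_{L_{a-1}}$ to that relation yields exactly the claimed formula, with the correct sign $(-1)^{|i|_a|j|_a+|i|_a|h|_a+|j|_a|h|_a}$.

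The main obstacle is purely the bookkeeping in the first step — checking cleanly that each quasideterminant \eqref{quasid}--\eqref{quasif} really does witness membership in the claimed $Y^{NW}_L$, and that the iteration of \eqref{pqele} places every parabolic generator indexed by $c>a$ inside a single image $\psi_{L_a}(\cdots)$. Once this localization is secured, no further super-sign manipulation is required: every vanishing in item (1) is a single application of Corollary \ref{cor:commu}, and the surviving $a=b$ identity in item (2) is pulled back from the RTT presentation via an injective homomorphism, so no subtle $p>0$ phenomena intervene in this proposition.
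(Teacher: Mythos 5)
Your proposal is correct and takes essentially the same approach as the paper: the paper's own proof likewise dismisses all cases with $a\neq b$ by Corollary \ref{cor:commu} and reduces the $a=b$ relations to the RTT relations \eqref{RTT relations} via the shift map $\psi_L$ together with \eqref{Dtident} and \eqref{pqele}. Your write-up simply makes explicit the localization bookkeeping (which parabolic generator lies in which northwestern corner versus in which image of $\psi_L$) that the paper leaves implicit.
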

\begin{proof}
We only need to consider the case that $a=b$  by Corollary \ref{cor:commu},
and then these relations follows directly from $a=1$ since the map $\psi_{L}$ and \eqref{RTT relations}.
\end{proof}
In the case of \cite{BK1} and \cite{Peng16},
if the generators are from two different blocks and the blocks are not ``close", then they commute.
Fortunately,  Corollary \ref{cor:commu} and Prposition \ref{dd0} mean that this phenomenon remains to be true in characteristic $p$.
As a consequence, we only have to focus on the supercommutation relations of the elements that are either in the same block, or their belonging blocks are ``close enough" for the length of $\mu$ is $2,3,4$,
the situations are less complicated so that we may derive various relations among those generators by direct computation.

Then we will take advantage of the above homomorphisms $\psi_L$ and $\zeta_{M|N}$ between super Yangians since these maps carry the relations in the special cases that $n{\leq}4$ to the general case, so that we obtain many relations in $Y_{M|N}$. 
Finally we prove that we have found enough relations for our presentation.

\section{Special Case: $n=2$}
In this section, we focus on the very first non-trivial case under our consideration; that is, $\mu=(\mu_1,\mu_2)$ with a fixed 01-sequence $\so=\so_1\so_2$. 
We list our parabolic generators as follow:
\begin{align*}
	&\big\lbrace D_{a;i,j}^{(r)}, D_{a;i,j}^{\prime(r)} \,|\, {a=1,2;\; 1{\leq}i,j{\leq}\mu_a;\; r\geq 0}\big\rbrace,\\
	&\big\lbrace E_{1;i,j}^{(r)} \,|\, 1{\leq}i{\leq}\mu_1, 1{\leq}j\leq\mu_{2};\; r\geq 1\big\rbrace,\\
	&\big\lbrace F_{1;i,j}^{(r)} \,|\, 1{\leq}i\leq\mu_{2}, 1{\leq}j{\leq}\mu_1;\; r\geq 1\big\rbrace.
\end{align*}

We compute the matrix products \eqref{Tp=FDE} and \eqref{gaussdec} with respect to the composition $\mu=(\mu_1,\mu_2)$ as follows,
\begin{align*}
	{^\mu}T(u)&
	=\left(
	\begin{array}{ccc}
		D_1(u) &D_1(u)E_1(u)\\
		F_1(u)D_1(u)&F_1(u)D_1(u)E_1(u)+D_2(u)
	\end{array}
	\right),\\
	{^\mu}T(u)^{-1}&=\left(\begin{array}{cc}
		D_1(u)^{-1}+E_1(u)D_2(u)^{-1}F_1(u)&-E_1(v)D_2(u)^{-1}\\
		-D_2(u)^{-1}F_{1}(u)&D_2(u)^{-1}
	\end{array}\right),
\end{align*}
and then get the following identities.	
\begin{equation}\label{eq:n=2}
	\begin{aligned}
		t_{i,j}(u)&=D_{1;i,j}(u), &\forall 1\le i,j\le \mu_1,\\
		t_{i,\mu_1+j}(u)&=D_{1;i,p}E_{1;p,j}(u),&\forall 1{\le} i{\le} \mu_1, 1{\le} j{\le} \mu_2,\\
		t_{\mu_1+i,j}(u)&=F_{1;i,p}(u)D_{1;p,j}(u),&\forall 1{\le} i{\le} \mu_2, 1{\le} j{\le} \mu_1,\\
		t_{\mu_1+i,\mu_1+j}(u)&=F_{1;i,p}(u)D_{1;p,q}(u)E_{1;q,j}(u){+}D_{2;i,j}(u),&\forall 1\le i,j\le \mu_2,\\
		t^{\prime}_{i,j}(u)&=D^{\prime}_{1;i,j}(u){+}E_{1;i,p'}(u)D^{\prime}_{2;p',q'}(u)F_{1;q',j}(u),&\forall 1{\le} i,j{\le} \mu_1,\\
		t^{\prime}_{i,\mu_1+j}(u)&=-E_{1;i,p'}(u)D^{\prime}_{2;p',j}(u),&\forall 1{\le} i{\le} \mu_1, 1{\le} j{\le} \mu_2,\\
		t^{\prime}_{\mu_1+i,j}(u)&=-D^{\prime}_{2;i,p'}(u)F_{1;p',j}(u),&\forall 1{\le} i{\le} \mu_2, 1{\le} j{\le} \mu_1,\\
		t^{\prime}_{\mu_1+i,\mu_1+j}(u)&=D^{\prime}_{2;i,j}(u),&\forall 1\le i,j\le \mu_2,
	\end{aligned}
\end{equation}
where the indices $p,q$ (respectively, $p',q'$) are summed over $1,\ldots,\mu_1$ (respectively, $1,\ldots,\mu_2$).

The following proposition gives explicitly the relations among the generators other than those relations already obtained in Proposition \ref{dd0}.
{\allowdisplaybreaks
	\begin{proposition}\label{n=2}
		Let $\mu=(\mu_1,\mu_2)$ be a composition of $M+N$. The following identities hold in
		$Y_{\mu}((u^{-1},v^{-1}))$:
		\begin{align}
			(u-v)[D_{1;i,j}(u), E_{1;h,k}(v)]\label{d1e1}
			&=(-1)^{|h|_1|j|_1}\delta_{hj}\sum_{p=1}^{\mu_1}D_{1;i,p}(u)\big(E_{1;p,k}(v)-E_{1;p,k}(u)\big),\\[2mm]
			(u-v)[E_{1;i,j}(u), D^{\prime}_{2;h,k}(v)]
			&=(-1)^{|h|_2|j|_2}\del_{hj} \sum_{q=1}^{\mu_2}\big(E_{1;i,q}(u)-E_{1;i,q}(v)\big)D^{\prime}_{2;q,k}(v),\label{e1d2}\\[2mm]        
			(u-v)[D_{2;i,j}(u), E_{1;h,k}(v)]
			&\notag=(-1)^{|h|_1|k|_2+|h|_1|j|_2+|j|_2|k|_2}\\
			&\qquad \times D_{2;i,k}(u)\big(E_{1;h,j}(u)-E_{1;h,j}(v)\big),\label{d2e1}\\[2mm]              
			(u-v)[D_{1;i,j}(u), F_{1;h,k}(v)]
			&\notag=(-1)^{|i|_1|j|_1+|h|_2|i|_1+|h|_2|j|_1}\delta_{ik}\\
			&\qquad \times\sum_{p=1}^{\mu_1}\big(F_{1;h,p}(u)-F_{1;h,p}(v)\big)D_{1;p,j}(u),\label{d1f1}\\[2mm]
			(u-v)[F_{1;i,j}(u), D^{\prime}_{2;h,k}(v)]
			&\notag=(-1)^{|h|_2|i|_2+|h|_2|j|_1+|j|_1|k|_2}\del_{ik}\\
			&\qquad \times\sum_{q=1}^{\mu_2}D^{\prime}_{2;h,q}(v)\big(F_{1;q,j}(v)-F_{1;q,j}(u)\big),\label{f1d2}\\[2mm]          
			(u-v)[D_{2;i,j}(u), F_{1;h,k}(v)]
			&\notag=(-1)^{|h|_2|k|_1+|h|_2|j|_2+|j|_2|k|_1}\\
			&\qquad\times\big(F_{1;i,k}(v)-F_{1;i,k}(u)\big)D_{2;h,j}(u),\label{d2f1}\\[2mm]       
			(u-v)[E_{1;i,j}(u), F_{1;h,k}(v)]
			&\notag=(-1)^{|h|_2|i|_1+|i|_1|j|_2+|h|_2|j|_2}D_{2;h,j}(u) D^{\prime}_{1;i,k}(u)\\
			&\qquad -(-1)^{|h|_2|k|_1+|j|_2|k|_1+|h|_2|j|_2}
			D^{\prime}_{1;i,k}(v) D_{2;h,j}(v),\label{e1f1}\\[2mm]
		(u-v)[E_{1;i,j}(u), E_{1;h,k}(v)]
			&\notag=(-1)^{|h|_1|j|_2+|j|_2|k|_2+|h|_1|k|_2}\\
			&\qquad \times\big(E_{1;i,k}(u)-E_{1;i,k}(v)\big)\big(E_{1;h,j}(u)-E_{1;h,j}(v)\big),\label{e1e1}\\[2mm]
		(u-v)[F_{1;i,j}(u), F_{1;h,k}(v)]
			&\notag=-(-1)^{|i|_2|j|_1+|h|_2|i|_2+|h|_2|j|_1}\\
			&\qquad \times\big(F_{1;h,j}(u)-F_{1;h,j}(v)\big)\big(F_{1;i,k}(u)-F_{1;i,k}(v)\big).\label{f1f1} 
		\end{align}
		The identities hold for all $1{\leq}i,j{\leq}\mu_1$ if $D_{1;i.j}(u)$ appears on the left-hand side of the equation, 
		for all $1{\leq}h,k{\leq}\mu_2$ if $D_{2;h,k}^\prime(u)$ appears on the left-hand side of the equation, 
		for all $1{\leq}i^\prime {\leq}\mu_1$, $1{\leq}j^\prime {\leq}\mu_2$ if $E_{1;i^\prime,j^\prime}(u)$ appears on the left-hand side of the equation, and
		for all $1{\leq}h^\prime{\leq}\mu_2$, $1{\leq}k^\prime{\leq}\mu_1$ if $F_{1;h^\prime,k^\prime}(u)$ appears on the left-hand side of the equation.         
	\end{proposition}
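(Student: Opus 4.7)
The strategy is to convert every commutator of parabolic generators into a commutator of RTT generators via the Gauss-decomposition formulas \eqref{eq:n=2}, and then to apply the RTT relations \eqref{tiju tkl relation} and \eqref{commurelation}. For $n=2$ each $t_{i,j}(u)$ and $t'_{i,j}(u)$ is an explicit bilinear expression in the $D$'s, $D'$'s, $E$'s and $F$'s, so after expanding the RTT bracket by the super-Leibniz rule and subtracting off all terms that have already been computed (via Proposition \ref{dd0} for $[D,D]$ and $[D,D']$, and then inductively the earlier relations in the proposition itself), the desired target commutator is isolated. I would prove the ten identities in the order they are listed, because the quadratic ones \eqref{e1f1}, \eqref{e1e1}, \eqref{f1f1} feed on the $D$--$E$ and $D$--$F$ relations proved first.

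For the relations \eqref{d1e1}--\eqref{d2f1} involving a $D$ (or $D'$) with an $E$ or $F$, I would select, for each, an RTT pair $(t_{i,j},t_{k,l})$ or $(t_{i,j},t'_{k,l})$ whose Gauss factorization contains exactly one of the target generator together with factors of $D$'s already under control. For instance, to prove \eqref{d1e1} it suffices to expand
\[
[t_{i,j}(u),t_{h,\mu_1+k}(v)] = \Bigl[D_{1;i,j}(u),\;\sum_{p=1}^{\mu_1}D_{1;h,p}(v)\,E_{1;p,k}(v)\Bigr],
\]
apply the known $[D_1,D_1]$-relation to pull $D_{1;h,p}(v)$ past $D_{1;i,j}(u)$ on the left-hand side, evaluate the right-hand side via \eqref{tiju tkl relation}, and finally multiply by $D'_1(v)$ on the left to isolate $[D_{1;i,j}(u),E_{1;p,k}(v)]$. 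Relations \eqref{e1d2} and \eqref{f1d2} are obtained analogously from $[t,t']$ brackets via \eqref{commurelation}, while \eqref{d2e1}, \eqref{d2f1}, \eqref{d1f1} use the ``lower'' factorizations $t_{\mu_1+i,j}(u)=\sum_p F_{1;i,p}(u)D_{1;p,j}(u)$ and $t_{\mu_1+i,\mu_1+j}(u)=F_{1;i,p}(u)D_{1;p,q}(u)E_{1;q,j}(u)+D_{2;i,j}(u)$; in the last case one first uses the $[D_1,F_1]$- and $[D_1,E_1]$-relations already in hand to peel away unwanted terms.

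For the quadratic relations \eqref{e1f1}, \eqref{e1e1}, \eqref{f1f1}, I would work with the RTT commutators of two off-block-diagonal entries: $[t_{i,\mu_1+j}(u),t_{h,\mu_1+k}(v)]$ for \eqref{e1e1}, $[t_{\mu_1+i,j}(u),t_{\mu_1+h,k}(v)]$ for \eqref{f1f1}, and $[t_{i,\mu_1+j}(u),t_{\mu_1+h,k}(v)]$ for \eqref{e1f1}. Expanding each side using \eqref{eq:n=2} and the super-Leibniz rule produces a sum of $[D,D]$, $[D,E]$, $[D,F]$, $[E,E]/[E,F]/[F,F]$ brackets; substituting the already-proved relations for the first three families leaves a linear equation for the desired bracket, which one solves by multiplying through by appropriate $D'_1$, $D'_2$. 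As a consistency check on \eqref{d1f1}--\eqref{d2f1} and \eqref{f1f1}, one can apply the isomorphism $\zeta_{M|N}$ of Corollary \ref{zetac} to the already-proved $E$-versions in $Y_{N|M}(\sd)$ with composition $\overleftarrow{\mu}$, bearing in mind the parity flip \eqref{parity}.

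The principal obstacle is the bookkeeping of $\mathbb{Z}_2$-signs. Every substitution from \eqref{eq:n=2} into a commutator contributes a sign $(-1)^{|i||j|+|i||k|+|j||k|}$ from \eqref{tiju tkl relation}, compounded by the Koszul signs produced when $D$'s, $D'$'s are moved past $E$'s, $F$'s; one must verify that the total sign assembles into the prefactor printed in each of \eqref{d1e1}--\eqref{f1f1}, which requires repeatedly invoking \eqref{respa} to convert between $|i|$ and $|i|_a$. In characteristic $p$ this is particularly delicate because the shortcut ``$a=-a\Rightarrow a=0$'' used in \cite{Peng16} fails when $p=2$, so any simplification that silently relied on such cancellation in the complex setting must here be replaced by an explicit sign calculation. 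A secondary technical nuisance is that after clearing the factor $(u-v)$ one has to match coefficients of $u^{-r}v^{-s}$ order by order; I would check that the telescoping sums on the right-hand side of \eqref{tiju tkl relation} and \eqref{commurelation} continue to balance against the Leibniz expansions on the left, since only the signs — not the combinatorial structure — of the modular computation differ from the characteristic-zero case.
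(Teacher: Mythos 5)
Your plan for the relations \eqref{d1e1}--\eqref{d2f1} and \eqref{e1f1} is essentially the standard one and matches what the paper (following \cite{BK1} and \cite{Peng16}) does: substitute the Gauss factorizations \eqref{eq:n=2} into a suitable $[t,t]$ or $[t,t']$ bracket, use \eqref{tiju tkl relation} or \eqref{commurelation}, peel off the already-known $[D,D]$, $[D,E]$, $[D,F]$ brackets, and cancel the invertible $D$-factors. (The paper organizes this the other way around --- it proves the $F$-relations directly and obtains the $E$-relations \eqref{d1e1}--\eqref{d2e1}, \eqref{e1e1} by applying $\zeta_{M|N}$, precisely because the signs on the $F$-side are the delicate ones it wants to document --- but your ordering is equally legitimate.)

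The genuine gap is in your treatment of the quadratic relations \eqref{e1e1} and \eqref{f1f1}. You claim that expanding the RTT bracket and substituting the known relations ``leaves a linear equation for the desired bracket, which one solves by multiplying through by appropriate $D'_1$, $D'_2$.'' It does not. First, your proposed starting point $[t_{\mu_1+i,j}(u),t_{\mu_1+h,k}(v)]$ has a nonzero right-hand side $t_{\mu_1+h,j}(u)t_{\mu_1+i,k}(v)-t_{\mu_1+h,j}(v)t_{\mu_1+i,k}(u)$, and reordering it into the normal form $F(u)F(v)D(u)D(v)$ reintroduces the unknown commutator $[F_{1}(u),F_{1}(v)]$ on the right, so the bracket is not isolated; the paper instead starts from the vanishing bracket $(u-v)^2[t_{\mu_1+i,j}(v),t'_{\mu_1+h,k}(u)]=0$. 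Second, and more importantly, what this computation actually yields is an identity for $(u-v)^2[F_{1;i,j}(u),F_{1;h,k}(v)]$ (the paper's \eqref{f1-3'}), whose right-hand side is \emph{not} visibly $(u-v)$ times the right-hand side of \eqref{f1f1}. Passing from the $(u-v)^2$-identity to the stated $(u-v)$-identity is the heart of the proof and cannot be done by formal division: the paper runs an induction on the total degree $d$ of the homogeneous components in $u^{-1},v^{-1}$ (equations \eqref{f1-0}--\eqref{f1-6}), which simultaneously establishes the equal-argument vanishing $\{[F_{1;i,j}(u),F_{1;h,k}(u)]\}_{d+1}=0$ needed to reorder the products $F(u)F(u)$ at each stage. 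Your proposal never mentions this induction, and your closing remark about matching coefficients of $u^{-r}v^{-s}$ does not substitute for it. Without this step the argument for \eqref{e1e1} and \eqref{f1f1} (and hence, via $\zeta_{M|N}$ or directly, for both) is incomplete.
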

}

\begin{remark}\label{re:FF}
Together with the identity \begin{align}\label{gv-gu/u-v}
		\frac{g(v)-g(u)}{u-v}=\sum\limits_{r,s\geq 1}g^{(r+s-1)}u^{-r}v^{-s}
	\end{align}
	for any formal series $g(u)=\sum_{r\geq 0}g^{(r)}u^{-r}$, 
	we have 
	\begin{equation}\label{FF}
		[E_{1;i,j}(v),E_{1;h,k}(v)]=[F_{1;i,j}(v),F_{1;h,k}(v)]=0
	\end{equation}
	obtained by dividing two side of \eqref{e1e1} and \eqref{f1f1} by $(u-v)$ and setting $u=v$, respectively.
\end{remark}

\begin{proof}
Equations  \eqref{d1e1}--\eqref{d2e1} and \eqref{e1e1} follow from applying the map $\zeta_{M|N}$ to \eqref{d1f1}--\eqref{d2f1} and \eqref{f1f1} just with suitable choices of indices.
So we will prove \eqref{f1f1} in detail here for $F$'s as an illustrating example  to demonstrate that the proof in \cite[Lemma 6.3]{BK1} and \cite[Section 5]{Peng16} mainly related to $E'$s also works in positive characteristic.

To do that,
we will start with the relation $(u-v)^2[t_{\mu_1+i,j}(v),t'_{\mu_1+h,k}(u)]=0$ for all $1{\le} i{\le} \mu_2, 1{\le} j{\le} \mu_1$ and $1{\le} h{\le} \mu_2, 1{\le} k{\le} \mu_1$.
Then computing the bracket after substituting by the third identity and the seventh identity in \eqref{eq:n=2}, we have
\begin{equation}\label{f1-1}
\begin{aligned}
	&(u-v)^2D'_{2;h,q}(u)F_{1;q,k}(u)F_{1;i,p}(v)D_{1;p,j}(v)\\
	=&(-1)^{\left(|j|_1+|i|_2\right)\left(|k|_1+|h|_2\right)}(u-v)^2F_{1;i,p}(v)D_{1;p,j}(v)D'_{2;h,q}(u)F_{1;q,k}(u)\\
	=&(-1)^{\left(|j|_1{+}|i|_2\right)\left(|k|_1{+}|h|_2\right)}(-1)^{\left(|p|_1{+}|j|_1\right)\left(|h|_2{+}|q|_2\right)}(u{-}v)^2F_{1;i,p}(v)D'_{2;h,q}(u)D_{1;p,j}(v)F_{1;q,k}(u),
\end{aligned}
\end{equation}
where the indices $p$ and $q$ are summed from 1 to $\mu_1$ and $\mu_2$, respectively.
Then we compute the brackets in \eqref{d1f1} and \eqref{f1d2}, and obtain the following two identites:
\begin{multline*}
	(v-u)F_{1;i,p}(v)D'_{2;h,q}(u)=(v-u)(-1)^{\left(|p|_1+|i|_2\right)\left(|q|_2+|h|_2\right)}D'_{2;h,q}(u)F_{1;i,p}(v)\\
	+\delta_{iq}(-1)^{|i|_2|h|_2+|p|_1|h|_2+|p|_1|q|_2}D'_{2;h,g_1}(u)\left(F_{1;g_1,p}(u)-F_{1;g_1,p}(v)\right),
\end{multline*}
\begin{multline*}
	(v-u)D_{1;p,j}(v)F_{1;q,k}(u)=(v-u)(-1)^{\left(|p|_1+|j|_1\right)\left(|k|_1+|q|_2\right)}F_{1;q,k}(u)D_{1;p,j}(v)\\
	+\delta_{pk}(-1)^{|p|_1|j|_1+|p|_1|q|_2+|j|_1|q|_2}\left(F_{1;q,g_2}(v)-F_{1;q,g_2}(u)\right)D_{1;g_2,j}(v),
\end{multline*}
where the indices $g_1$ and $g_2$ are summed from 1 to $\mu_2$ and $\mu_1$, respectively.
Substituting these two identities into the right side of \eqref{f1-1},
then we derive
\begin{equation}\label{f1-3}
\begin{aligned}
&(u-v)^2D'_{2;h,q}(u)[F_{1;q,k}(u),F_{1;i,p}(v)]D_{1;p,j}(v)\\
=&(-1)^{|k|_1|i|_2+|k|_1|q|_2+|i|_2|q|_2}(v{-}u)D'_{2;h,q}(u)F_{1;i,k}(v)\left(F_{1;q,p}(v){-}F_{1;q,p}(u)\right) D_{1;p,j}(v)\\
&+(-1)^{|p|_1|k|_1+|p|_1|i|_2+|k|_1|i|_2}(v{-}u)D'_{2;h,q}(u)\left(F_{1;q,p}(u){-}F_{1;q,p}(v)\right)F_{1;i,k}(u) D_{1;p,j}(v)\\
&+D'_{2;h,q}(u)\left(F_{1;q,k}(u)-F_{1;q,k}(v)\right)\left(F_{1;i,p}(v)-F_{1;i,p}(u)\right) D_{1;p,j}(v)
\end{aligned}
\end{equation}where the indices $p$ and $q$ are summed from 1 to $\mu_1$ and $\mu_2$, respectively.

The indices $h$ and $j$ are not involved in the above those sign factors,
so multiplying each term in \eqref{f1-3} by $D_{2}(u)$ on the left, and by $D'_{1}(v)$ on the right, respectively,
we derive that
\begin{equation}\label{f1-3'}
	\begin{aligned}
		&(u-v)^2[F_{1;i,j}(u),F_{1;k,l}(v)]\\
		=&(-1)^{|j|_1|h|_2+|j|_1|i|_2+|h|_2|i|_2}(v{-}u)F_{1;h,j}(v)\left(F_{1;i,k}(v){-}F_{1;i,k}(u)\right)\\
		&+(-1)^{|k|_1|j|_1+|k|_1|h|_2+|j|_1|h|_2}(v{-}u)\left(F_{1;i,k}(u){-}F_{1;i,k}(v)\right)F_{1;h,j}(u)\\
		&+\left(F_{1;i,j}(u)-F_{1;i,j}(v)\right)\left(F_{1;h,k}(v)-F_{1;h,k}(u)\right).
	\end{aligned}
\end{equation}

For a power series $P$ in $Y_{\mu}[[u^{-1}, v^{-1}]]$, we write $\big\lbrace P\big\rbrace_{d}$ for the homogeneous component of $P$ of total degree $d$ in the variables $u^{-1}$ and $v^{-1}$,
then \eqref{f1f1} is a consequence of the following \eqref{f1-0}.
\begin{equation}\label{f1-0}
	\begin{aligned}	
		&(u-v)\left\{[F_{1;i,j}(u), F_{1;h,k}(v)]\right\}_{d+1}\\
		&=(-1)^{|j|_1|i|_2+|h|_2|i|_2+|j|_1|h|_2}\left\{\big(F_{1;h,j}(v)-F_{1;h,j}(u)\big)\big(F_{1;i,k}(u)-F_{1;i,k}(v)\big)\right\}_{d}.
	\end{aligned}
\end{equation}

We will prove \eqref{f1-0} by induction on $d$.
Take $\big\lbrace\:\big\rbrace_0$ on \eqref{f1-3'} so that 
\[
(u-v)^2\big\lbrace [F_{1;i,j}(u),F_{1;h,k}(v)]\big\rbrace_2=0.
\]
Note that the right-hand side of \eqref{f1-3} is zero when $u=v$, hence we may divide both sides by $(u-v)$ and therefore $(u-v)\big\lbrace [F_{1;i,j}(u),F_{1;h,k}(v)]\big\rbrace_2=0$, 
it proves \eqref{f1-0} is true for $d=1$.
Assuming that \eqref{f1-0} holds for some $d>1$,
that is,
\begin{equation}\label{f1-4}
	\begin{aligned}	
		&\left\{\big(F_{1;i,j}(u)-F_{1;i,j}(v)\big)\big(F_{1;h,k}(v)-F_{1;h,k}(u)\big)\right\}_{d}\\
		=&(-1)^{|j|_1|h|_2+|h|_2|i|_2+|j|_1|i|_2}(u-v)\left\{F_{1;h,j}(u)F_{1;i,k}(v)\right\}_{d+1}\\
		&-(-1)^{|j|_1|h|_2+|j|_1|k|_1+|k|_1|h|_2}(u-v)\left\{F_{1;i,k}(v)F_{1;h,j}(u)\right\}_{d+1}.
	\end{aligned}
\end{equation}

Take $\big\lbrace\:\big\rbrace_d$ on \eqref{f1-3},
and together with \eqref{f1-4},
we have
\begin{equation}\label{f1-5}
	\begin{aligned}
		&(u-v)^2\left\{[F_{1;i,j}(u),F_{1;k,l}(v)]\right\}_{d+2}=\left\{(u-v)^2[F_{1;i,j}(u),F_{1;k,l}(v)]\right\}_{d}\\
=&(-1)^{|j|_1|h|_2+|j|_1|i|_2+|h|_2|i|_2}(u{-}v)\left\{F_{1;h,j}(v)\left(F_{1;i,k}(u){-}F_{1;i,k}(v)\right)\right\}_{d+1}\\
&+(-1)^{|k|_1|j|_1+|k|_1|h|_2+|j|_1|h|_2+1}(u{-}v)\left\{F_{1;i,k}(u)F_{1;h,j}(u)\right\}_{d+1}\\
&+(-1)^{|j|_1|h|_2+|h|_2|i|_2+|j|_1|i|_2}(u-v)\left\{F_{1;h,j}(u)F_{1;i,k}(v)\right\}_{d+1}\\
	\end{aligned}
\end{equation}

Moreover,
the assumption gives
\begin{multline*}
(-1)^{|i|_2|j|_1+|h|_2|i|_2+|h|_2|j|_1}\left\{[F_{1;i,j}(u), F_{1;h,k}(v)]\right\}_{d+1}\\
	=\left\{\frac{\big(F_{1;h,j}(v)-F_{1;h,j}(u)\big)}{(u-v)}\big(F_{1;i,k}(u)-F_{1;i,k}(v)\big)\right\}_{d}.
\end{multline*}
Note that its right-hand side is zero when we set $u=v$, which implies that the element $\left\{[F_{1;i,j}(u), F_{1;h,k}(u)]\right\}_{d+1}$ is $0$,
then it gives
\begin{equation}\label{f1-6}
\begin{aligned}
\left\{F_{1;i,j}(u)F_{1;h,k}(u)\right\}_{d+1}=(-1)^{(|j|_1+|i|_2)(|k|_1+|h|_2)}\left\{F_{1;h,k}(u)F_{1;i,j}(u)\right\}_{d+1}.
\end{aligned}
\end{equation}		
Substituting \eqref{f1-6} into the last term in \eqref{f1-5},
we derive that
\begin{multline*}
(u-v)^2\left\{[F_{1;i,j}(u),F_{1;k,l}(v)]\right\}_{d+2}
=(-1)^{|j|_1|h|_2+|j|_1|i|_2+|h|_2|i|_2}(u{-}v)\\
\left\{\left(F_{1;h,j}(v)-F_{1;h,j}(u)\right)\left(F_{1;i,k}(u)
		-F_{1;i,k}(v)\right)\right\}_{d+1}.
\end{multline*}
So the relation \eqref{f1-0} is established.
\end{proof}

\section{Special Cases: $n=3$ and the super Serre relations}
In this section, we will consider the generators $D$'s, $E$'s and $F$'s in different super Yangians at the same time but using the same notation. It should be clear from the context which super Yangian we are dealing with.

We will describe a parabolic presentation of the Yangian $Y_{\mu}$ with $\mu=(\mu_1,\mu_2,\mu_3)$  using also the Gauss decomposition, 
and then use this to give the more general result in the next section. 
Similar to the proof of Proposition \ref{n=2}, 
we compute the matrix products
 \eqref{Tp=FDE} and \eqref{gaussdec} with respect to the composition $\mu=(\mu_1,\mu_2,\mu_3)$ as follows. 
$$^{\mu}T(u){=}\left(\begin{array}{ccc}
	D_1(u)&D_1(u)E_1(u)&D_1(u)E_{1,3}(u)\\
	F_1(u)D_1(u)&F_1(u)D_1(u)E_1(u)+D_2(u)&t_{23}\\
	F_{3,1}(u)D_1(u)&F_{3,1}(u)D_1(u)E_1(u){+}F_2(u)D_2(u)&t_{33}
\end{array}\right)$$
where $$t_{23}=F_1(u)D_1(u)E_{1,3}(u)+D_2(u)E_2(u),$$ $$t_{33}=F_{3,1}(u)D_1(u)E_{1,3}(u){+}F_2(u)D_2(u)E_2(u){+}D_3(u).$$
$$
^{\mu}T(u)^{-1}{=}
	\left(\begin{array}{ccc}
		t_{11}'(u)&{-}E_1(u)D_2(u)^{-1}{-}E'_{1,3}(u)D_3(u)^{-1}F_2(u)&E'_{1,3}(u)D_3(u)^{-1}\\
		t_{21}'(u)&D_2(u)^{-1}+E_2(u)D_3(u)^{-1}F_2(u)&-E_2(u)D_3(u)^{-1}\\
		D_3(u)^{-1}F'_{3,1}(u)&-D_3(u)^{-1}F_{2}(u)&D_3(u)^{-1}
	\end{array}\right),
$$
where
$$t_{11}'(u)=D_1(u)^{-1}{+}E_1(u)D_2(u)^{-1}F_1(u)
{+}E'_{1,3}(u)D_3(u)^{-1}F'_{3,1}(u),$$
$$t_{21}'(u)={-}D_2(u)^{-1}F_1(u){-}E_2(u)D_3(u)^{-1}F'_{3,1}(u),$$
and the notations
$$E'_{1,3}(u)=E_1(u)E_2(u)-E_{1,3}(u),\quad F'_{3,1}(u)=F_2(u)F_1(u)-F_{3,1}(u).$$
These expressions allow us to derive the following identities.
\begin{eqnarray}
\label{T11}&t_{i,j}(u)=D_{1;i,j}(u),&\mbox{for}\,1{\leq}i,j{\leq}\mu_1,\\
\label{T12}&t_{i,\mu_1+j}(u)=\sum_{p=1}^{\mu_1}D_{1;i,p}E_{1;p,j}(u),&\mbox{for}\,1{\leq}i{\leq}\mu_1,\,1{\leq}j{\leq}\mu_2,\\
\label{T21}&t_{\mu_1+i,j}(u)=\sum_{p=1}^{\mu_1}F_{1;i,p}(u)D_{1;p,j}(u),&\mbox{for}\,1{\leq}i{\leq}\mu_2,\,1{\leq}j{\leq}\mu_1,\\
\label{T31}&t_{\mu_1+\mu_2+i,j}(u)=\sum_{p=1}^{\mu_1}F_{3,1;i,p}(u)D_{1;p,j}(u),&\mbox{for}\,1{\leq}i{\leq}\mu_3,\,1{\leq}j{\leq}\mu_1,\\
\label{TP33}&t^{\prime}_{\mu_1+\mu_2+i,\mu_1+\mu_2+j}(u)=D'_{3;i,j}(u),&\mbox{for}\,1{\leq}i,j{\leq}\mu_3,\\
\label{TP32}&t^{\prime}_{\mu_1+\mu_2+i,\mu_1+j}(u)=-\sum_{q=1}^{\mu_3}D'_{3;i,q}(u)F_{2;q,j}(u),&\mbox{for}\,1{\leq}i{\leq}\mu_3,\,1{\leq}j{\leq}\mu_2,\\
\label{TP31}&t^{\prime}_{\mu_1+\mu_2+i,j}(u)=\sum_{q=1}^{\mu_3}D'_{3;i,q}(u)F'_{3,1;q,j}(u),&\mbox{for}\,1{\leq}i{\leq}\mu_3,\,1{\leq}j{\leq}\mu_1,\\
	\label{TP23}&t^{\prime}_{\mu_1+i,\mu_1+\mu_2+j}(u)=-\sum_{q=1}^{\mu_3}E_{2;i,p'}(u)D^{\prime}_{3;q,j}(u),&\mbox{for}\,1{\leq}i{\leq}\mu_2,\,1{\leq}j{\leq}\mu_3
\end{eqnarray}

In the case $\mu=(\mu_1,\mu_2,\mu_3)$,
the relations for $D$'s and $F$'s are left to be considered as follows:

\begin{proposition}\label{3DF}
	The following identities hold in $Y_{(\mu_1,\mu_2,\mu_3)}((u^{-1},v^{-1}))$:
\begin{eqnarray}
\label{D1F2} [D_{1;i,j}(u),F_{2;h,k}(v)]&{=}&0\quad \mbox{for}~ 1{\leq}i,j{\leq}\mu_1, \,1{\leq}h{\leq}\mu_3,\,1{\leq}k{\leq}\mu_2,
\\
\label{F1D3}	[F_{1;i,j}(u),D^{\prime}_{3;h,k}(v)]&{=}&0\quad \mbox{for}~ 1{\leq}i{\leq}\mu_2,\,1{\leq}j{\leq}\mu_1,\,1{\leq}h,k{\leq}\mu_3,
\end{eqnarray}
\begin{multline}\label{D3F2}
[F_{2;i,j}(u), D^{\prime}_{3;h,k}(v)]
=\dfrac{(-1)^{|h|_3|i|_3+|h|_3|j|_2+|j|_2|k|_3}}{(u-v)}\del_{ik}
D^{\prime}_{3;h,p}(v)\big(F_{2;p,j}(v){-}F_{2;p,j}(u)\big).
\end{multline}
\begin{multline}\label{D3F31}
[F_{3,1;i,j}(u), D^{\prime}_{3;h,k}(v)]
=-(-1)^{|j|_1|i|_3+|j|_1|r|_2+|h|_3|i|_3+|m|_3|r|_2+|h|_3|j|_1+|m|_3|j|_1}\\\del_{ik}D^{\prime}_{3;h,m}(v)[F_{1;r,j}(u),F_{2;m,r}(v)]
\end{multline}
where the indices $p$ is summed over $1,\cdots,\mu_3$.
\end{proposition}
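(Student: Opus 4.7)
My plan is to reduce \eqref{D1F2}, \eqref{F1D3}, \eqref{D3F2} to the case $n=2$ via the shift map $\psi_L$ and the swap map $\zeta_{M|N}$ from Section~\ref{section:maps}, and then to derive the more intricate identity \eqref{D3F31} from the inductive formula \eqref{terpow2} using a super-Jacobi expansion that invokes the first three identities.

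The commutativity \eqref{D1F2} is immediate from Corollary~\ref{cor:commu}: with $L = \mu_1$, the element $D_{1;i,j}(u) = t_{i,j}(u)$ lies in the northwestern corner subalgebra $Y_{p_1|q_1}(\so_1)$, while $F_{2;h,k}(v) = \psi_{\mu_1}(F_{1;h,k}(v))$ lies in the image $\psi_{\mu_1}(Y_{p_2+p_3|q_2+q_3}(\so_2\so_3))$. Identity \eqref{F1D3} is handled in the same spirit: using $F_{1;i,j}(u) = \sum_{p}t_{\mu_1+i,p}(u)D'_{1;p,j}(u)$ one sees that $F_{1;i,j}(u)$ lies in the northwestern $(\mu_1+\mu_2)\times(\mu_1+\mu_2)$ corner subalgebra $Y_{p_1+p_2|q_1+q_2}(\so_1\so_2)$, whereas by \eqref{TP33} the element $D'_{3;h,k}(v) = t'_{\mu_1+\mu_2+h,\mu_1+\mu_2+k}(v)$ lies in $\psi_{\mu_1+\mu_2}(Y_{p_3|q_3}(\so_3))$, and Corollary~\ref{cor:commu} again yields $[F_{1;i,j}(u), D'_{3;h,k}(v)] = 0$. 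For \eqref{D3F2}, I would apply $\psi_{\mu_1}$ directly to relation \eqref{f1d2} of the sub-Yangian of shape $(\mu_2,\mu_3)$, noting that $\psi_{\mu_1}(F_{1;i,j}(u)) = F_{2;i,j}(u)$, $\psi_{\mu_1}(D'_{2;h,k}(v)) = D'_{3;h,k}(v)$, and that the restricted parities shift as $|i|_a \mapsto |i|_{a+1}$.

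For the last identity \eqref{D3F31}, I would invoke \eqref{terpow2} with $b=3$, $a=1$ to write $F_{3,1;i,j}(u) = (-1)^{|r|_2}[F_{2;i,r}^{(1)}, F_{1;r,j}(u)]$ for any fixed $r \in [1,\mu_2]$, and then expand the bracket $[F_{3,1;i,j}(u), D'_{3;h,k}(v)]$ using the super-Jacobi identity
\[
[[A,B],C] = [A,[B,C]] - (-1)^{|A||B|}[B,[A,C]]
\]
applied to $A = F_{2;i,r}^{(1)}$, $B = F_{1;r,j}(u)$, $C = D'_{3;h,k}(v)$. The first term on the right vanishes by the already established \eqref{F1D3}. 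To handle the second, I would extract the coefficient of $u^{-1}$ from \eqref{D3F2} (using the expansion \eqref{gv-gu/u-v}) to obtain
\[
[F_{2;i,r}^{(1)}, D'_{3;h,k}(v)] = (-1)^{|h|_3|i|_3+|h|_3|r|_2+|r|_2|k|_3}\delta_{ik}\sum_{m}D'_{3;h,m}(v)F_{2;m,r}(v);
\]
because $[F_{1;r,j}(u), D'_{3;h,m}(v)] = 0$ by \eqref{F1D3}, the super-Leibniz rule $[X,YZ] = [X,Y]Z + (-1)^{|X||Y|}Y[X,Z]$ lets me pull $D'_{3;h,m}(v)$ out of the outer bracket with $F_{1;r,j}(u)$, producing precisely the shape $D'_{3;h,m}(v)[F_{1;r,j}(u), F_{2;m,r}(v)]$ on the right-hand side of \eqref{D3F31}. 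The principal obstacle will be the careful bookkeeping of $\mathbb{Z}_2$-grading sign factors: collecting all contributions modulo $2$ (and using $\delta_{ik}$ to replace $|i|_3$ by $|k|_3$ where needed) to recover the precise exponent claimed in \eqref{D3F31}. As emphasized in the introduction, sign manipulations involving the $F$'s are considerably more delicate than for the $E$'s, and the characteristic-$p$ setting rules out the simplifying identification $a = -a \Rightarrow a = 0$ that would be available over $\mathbb{C}$.
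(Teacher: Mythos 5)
Your proposal is correct and follows essentially the same route as the paper: \eqref{D1F2} and \eqref{F1D3} via the non-overlapping-index supercommutativity (the paper phrases this through \eqref{commurelation} and the Gauss-decomposition substitutions rather than citing Corollary~\ref{cor:commu} directly, but the mechanism is identical), \eqref{D3F2} by applying $\psi_{\mu_1}$ to \eqref{f1d2}, and \eqref{D3F31} from \eqref{terpow2} plus the super-Jacobi identity, \eqref{F1D3}, and the $u^{-1}$-coefficient of \eqref{D3F2}. Your sign bookkeeping (using $\delta_{ik}$ to identify $|i|_3$ with $|k|_3$) does reproduce the stated exponent, and your identification of the relevant coefficient as that of $u^{-1}$ is the correct reading of what the paper calls the ``$u^0$'' coefficient in \eqref{u0-coeff}.
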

\begin{proof}
	We obtain $[t_{i,j}(u),t^{\prime}_{\mu_1+\mu_2+h,\mu_1+k}(v)]{=}0$ and $[t_{\mu_1+i,j}(u),t^{\prime}_{\mu_1+\mu_2+h,\mu_1+\mu_2+k}(v)]{=}0$ by \eqref{commurelation}. Substituting by \eqref{T11} and \eqref{TP32}, \eqref{T21} and \eqref{TP33}, respectively,
	then using the fact that $D_{1;i,j}(u)$, $D^{\prime}_{3;h,k}(v)$ supercommute,
	it gives \eqref{D1F2} and \eqref{F1D3}.
	
To show \eqref{D3F2},	
the identity \eqref{f1d2} in $Y_{(\mu_2,\mu_3)}((u^{-1},v^{-1}))$ reads as	
	\begin{multline*}
		(u-v)[F_{1;i,j}(u), D^{\prime}_{2;h,k}(v)]
		=(-1)^{|h|_2|i|_2+|h|_2|j|_1+|j|_1|k|_2}\del_{ik}\times\\
		\sum_{m=1}^{\mu_2}D^{\prime}_{2;h,m}(v)\big(F_{1;m,j}(v)-F_{1;m,j}(u)\big).
	\end{multline*}
	Applying the map $\psi_{\mu_1}$ to this identity and using \eqref{pqele}, it gives \eqref{D3F2}.
	
To show \eqref{D3F31},	 by \eqref{terpow2},
	we have 
	$
	F_{3,1;i,j}(u)=(-1)^{|r|_{2}}[F_{2;i,r}^{(1)}, F_{1;r,j}(u)]
	$ for any $1{\leq}r{\leq}\mu_2$,
	then by the super Jacobi identity, \eqref{F1D3} and \eqref{u0-coeff},
	we derive it.
	\end{proof}

\begin{proposition}\label{3FF}
	The following identities hold in $Y_{(\mu_1,\mu_2,\mu_3)}((u^{-1},v^{-1}))$:
	\begin{eqnarray}
		\label{F1E2}[F_{1;i,j}(u),E_{2;h,k}(v)]&=&0\quad \mbox{for}~1{\leq}i,h{\leq}\mu_2,\, 1{\leq}j{\leq}\mu_1,\, 1{\leq}k{\leq}\mu_3,
	\end{eqnarray}
	\begin{multline}\label{F1F2}
		(u-v)[F_{1;i,j}(u),F_{2;h,k}(v)]=
		(-1)^{|i|_2|j|_1+|i|_2|h|_3+|j|_1|h|_3}\delta_{ik}\times\\
		\big\lbrace \sum_{q=1}^{\mu_2}F_{2;h,q}(v)\big(F_{1;q,j}(v)-F_{1;q,j}(u)\big)-F_{3,1;h,j}(v)+F_{3,1;h,j}(u)\big\rbrace,
	\end{multline}
	\begin{multline}\label{F2F3}
		[F_{3,1;i,j}(u),F_{2;h,k}(v)]=
		{-}(-1)^{|i|_3|j|_1{+}|i|_3|h|_3{+}|j|_1|h|_3{+}|g|_2}
		[F_{2;h,g}(v),F_{1;g,j}(u)]F_{2;i,k}(v),
	\end{multline}
	\begin{multline}\label{F1F31}
		[F_{1;i,j}(u) , F'_{3,1;h,k}(v)]=
		(-1)^{(|h|_3+|j|_1)(|k|_1+|g|_2)} F_{1;i,k}(u)[F_{1;g,j}(u),F_{2;h,g}(v)].
	\end{multline}
	Here \eqref{F1F2} holds for all $1{\leq}i,k{\leq}\mu_2$, $1{\leq}j{\leq}\mu_1$, $1{\leq}h{\leq}\mu_3$, \eqref{F2F3} holds for all $1{\leq}i,h{\leq}\mu_3$, $1{\leq}j{\leq}\mu_1$, $1{\leq}j,k{\leq}\mu_2$, and \eqref{F1F31} holds for all $1{\leq}i,g{\leq}\mu_2$, $1{\leq}j,k{\leq}\mu_1$, $1{\leq}h{\leq}\mu_3$.
\end{proposition}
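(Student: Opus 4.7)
The plan is to reduce all four identities to computations from Proposition~\ref{n=2}, together with the inductive formula \eqref{terpow2}, by using the block-commutativity provided by Corollary~\ref{cor:commu} and the Gauss-decomposition expressions \eqref{T11}--\eqref{TP31}. For \eqref{F1E2}, I will begin from the RTT consequence $[t_{\mu_1+i,j}(u),t^{\prime}_{\mu_1+h,\mu_1+\mu_2+k}(v)]=0$, which follows from \eqref{commurelation} because both Kronecker deltas $\delta_{\mu_1+h,j}$ and $\delta_{\mu_1+i,\mu_1+\mu_2+k}$ vanish ($j\le\mu_1<\mu_1+h$ and $\mu_1+i\le\mu_1+\mu_2<\mu_1+\mu_2+k$). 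Substituting \eqref{T21} and \eqref{TP23} and sliding the $D_1(u)$- and $D^{\prime}_3(v)$-factors past $F_1$ and $E_2$ (permissible by Corollary~\ref{cor:commu} and \eqref{F1D3}) reduces, after cancelling the invertible constant terms of $D_1(u)$ and $D^{\prime}_3(v)$, to the desired $[F_{1;i,j}(u),E_{2;h,k}(v)]=0$.

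For \eqref{F1F2}, I will apply \eqref{commurelation} to $[t_{\mu_1+i,j}(u),t^{\prime}_{\mu_1+\mu_2+h,\mu_1+k}(v)]$: the delta $\delta_{\mu_1+\mu_2+h,j}$ vanishes, but $\delta_{\mu_1+i,\mu_1+k}=\delta_{i,k}$ survives, producing a sum $\sum_{s=1}^{M+N} t^{\prime}_{\mu_1+\mu_2+h,s}(v)\,t_{s,j}(u)$ which I will split into three blocks according to the location of $s$, using \eqref{T11}, \eqref{T21}, \eqref{T31} together with \eqref{TP31}, \eqref{TP32}, \eqref{TP33}. On the left-hand side, expand $t_{\mu_1+i,j}(u)$ and $t^{\prime}_{\mu_1+\mu_2+h,\mu_1+k}(v)$ via \eqref{T21} and \eqref{TP32}, apply super-Leibniz, and use the block-commutativity facts $[D_1,F_2]=[D_1,D^{\prime}_3]=[F_1,D^{\prime}_3]=[D_1,F^{\prime}_{3,1}]=0$ (from Corollary~\ref{cor:commu} and \eqref{F1D3}) to pull $D_1(u)$ to the far right and $D^{\prime}_3(v)$ to the far left. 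Left-multiplying by $D_3(v)$ and right-multiplying by $D_1(u)^{-1}$ then produces \eqref{F1F2}; the contributions from $s\le\mu_1$ and $s>\mu_1+\mu_2$ combine, via $F^{\prime}_{3,1}=F_2 F_1-F_{3,1}$, to yield the $F_{3,1}(v)-F_{3,1}(u)$ term on the right.

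The remaining identities \eqref{F2F3} and \eqref{F1F31} are consequences of \eqref{F1F2} via \eqref{terpow2}, which writes $F_{3,1;i,j}(u)=(-1)^{|g|_2}[F_{2;i,g}^{(1)},F_{1;g,j}(u)]$ for any fixed $g\in[1,\mu_2]$. For \eqref{F2F3}, the super Jacobi identity expands $[[F_{2;i,g}^{(1)},F_{1;g,j}(u)],F_{2;h,k}(v)]$ into two nested brackets; the bracket $[F_{1;g,j}(u),F_{2;h,k}(v)]$ is given by \eqref{F1F2}, and $[F_{2;i,g}^{(1)},F_{2;h,k}(v)]$ is extracted as the coefficient of $u^{-1}$ on both sides of the $F_2$-analogue of \eqref{f1f1} (which yields an expression proportional to $F_{2;h,g}(v)F_{2;i,k}(v)$). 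For \eqref{F1F31}, I will expand $F^{\prime}_{3,1;h,k}(v)=\sum_{g'}F_{2;h,g'}(v)F_{1;g',k}(v)-F_{3,1;h,k}(v)$, apply super-Leibniz, and invoke \eqref{F1F2}, \eqref{f1f1}, together with the already-proved \eqref{F2F3}; the $F_{3,1}$-contributions cancel between the two pieces, leaving the stated product $F_{1;i,k}(u)[F_{1;g,j}(u),F_{2;h,g}(v)]$.

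The principal obstacle will be sign bookkeeping, which, as emphasized in the introduction, is more intricate for $F$'s than for $E$'s. Each super bracket and each super-Leibniz step introduces a product of parities from $|\cdot|_1,|\cdot|_2,|\cdot|_3$, and every sign must balance exactly, because in characteristic $p>0$ we cannot invoke the simplification $2a=0\Rightarrow a=0$ that was freely available in \cite{Peng16}. In characteristic $2$ in particular, Remark~\ref{re:FF} does not give strict anticommutativity of $F_1(u)$ with itself at distinct points, so the full form \eqref{f1f1} must be kept inside the reductions of \eqref{F2F3} and \eqref{F1F31}. The delicate sign tracking peaks in \eqref{F1F2}, where the three-block decomposition of $\sum_s t^{\prime}_{\mu_1+\mu_2+h,s}(v)t_{s,j}(u)$ produces a priori disparate contributions that must collapse into the single quasideterminantal combination appearing on the right-hand side.
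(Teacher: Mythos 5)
Your treatment of \eqref{F1E2} and \eqref{F1F2} is essentially the paper's own argument: both start from the instances of \eqref{commurelation} for $[t_{\mu_1+i,j}(u),t^{\prime}_{\mu_1+h,\mu_1+\mu_2+k}(v)]$ and $[t_{\mu_1+i,j}(u),t^{\prime}_{\mu_1+\mu_2+h,\mu_1+k}(v)]$, substitute the Gauss--decomposition expressions, strip the invertible $D_1(u)$ and $D^{\prime}_3(v)$ factors using the block-commutativity facts, and recombine the three blocks of $\sum_s t^{\prime}_{\mu_1+\mu_2+h,s}(v)t_{s,j}(u)$ through $F^{\prime}_{3,1}=F_2F_1-F_{3,1}$. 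No objection there.

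For \eqref{F2F3} and \eqref{F1F31} you take a genuinely different route, staying entirely inside the $F$-subalgebra via \eqref{terpow2}, super-Jacobi and the $n=2$ relations, whereas the paper goes back to the RTT identities $[t_{\mu_1+\mu_2+i,j}(u),t^{\prime}_{\mu_1+\mu_2+h,\mu_1+k}(v)]=0$ and $[t_{\mu_1+i,j}(u),t^{\prime}_{\mu_1+\mu_2+h,k}(v)]=0$ and routes the argument through the $D^{\prime}_3$-commutation identity \eqref{D3F31} and a derived relation for $[D_1,F^{\prime}_{3,1}]$. Your route has concrete gaps as sketched. First, in the Jacobi expansion of $[[F^{(1)}_{2;i,g},F_{1;g,j}(u)],F_{2;h,k}(v)]$ the inner bracket $[F_{1;g,j}(u),F_{2;h,k}(v)]$ is proportional to $\delta_{gk}$ by \eqref{F1F2}; when $g=k$ (unavoidable if $\mu_2=1$) its expansion reintroduces $F_{3,1}$, and the resulting bracket $[F^{(1)}_{2;i,g},F_{3,1;h,j}(w)]$ is a coefficient of the very identity \eqref{F2F3} being proved. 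Escaping this by an ``$a=-a\Rightarrow a=0$'' symmetry argument is exactly what the paper warns fails in characteristic $2$, so you must either first show (via \eqref{F1F2}) that the right-hand side of \eqref{F2F3} is independent of $g$ and then choose $g\neq k$, or give a separate argument when $\mu_2=1$; neither is in your sketch. Second, for \eqref{F1F31} you invoke ``the already-proved \eqref{F2F3}'', but that identity computes $[F_{3,1}(u),F_{2}(v)]$, not the bracket $[F_{1;i,j}(u),F_{3,1;h,k}(v)]$ that your expansion of $F^{\prime}_{3,1;h,k}(v)=\sum_{g'}F_{2;h,g'}(v)F_{1;g',k}(v)-F_{3,1;h,k}(v)$ actually requires; that bracket needs its own derivation, with the same $g=k$ caveat. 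Third, the Leibniz expansion naturally produces $F_{1;i,k}(v)$ sitting to the right of a bracket evaluated at $v$, while the claimed right-hand side of \eqref{F1F31} has $F_{1;i,k}(u)$ on the left; the reordering needs \eqref{f1f1} and \eqref{EEFF} and is precisely where the sign bookkeeping you flag becomes nontrivial, and none of it is carried out. The paper's detour through the inverse matrix avoids all three issues, at the cost of the auxiliary Proposition giving \eqref{D3F31}.
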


\begin{proof}
By \eqref{commurelation}, we have $[t_{\mu_1+i,j}(u),t^{\prime}_{\mu_1+h,\mu_1+\mu_2+k}(v)]=0$. Substituting by \eqref{T21} and \eqref{TP23}, we have
\[
[F_{1;i,p}(u)D_{1;p,j}(u),-E_{2;h,q}(v)D^{\prime}_{3;q,k}(v)]=0.
\]
Computing the bracket, we obtain
\begin{multline}\label{F1E2-1}
F_{1;i,p}(u)D_{1;p,j}(u)E_{2;h,q}(v)D^{\prime}_{3;q,k}(v)-\\
	(-1)^{(|i|_2+|j|_1)(|h|_2+|k|_3)}E_{2;h,q}(v)D^{\prime}_{3;q,k}(v)F_{1;i,p}(u)D_{1;p,j}(u)=0,
\end{multline}
where $p$ and $q$ are summed over $1,\ldots,\mu_1$ and $1,\ldots,\mu_3$, respectively.

Substituting \eqref{F1D3} and  \eqref{D1E2} into \eqref{F1E2-1} and using the fact that $D_{1;i,j}(u)$, $D^{\prime}_{3;h,k}(v)$ supercommute again,
we derive that
\begin{multline}\label{F1E2-2}
(-1)^{|p|_1|h|_2}F_{1;i,p}(u)E_{2;h,q}(v)D^{\prime}_{3;q,k}(v)D_{1;p,j}(u)-\\
	(-1)^{|i|_2|h|_2+|i|_2|q|_3+|p|_1|q|_3}E_{2;h,q}(v)F_{1;i,p}(u)D^{\prime}_{3;q,k}(v)D_{1;p,j}(u)=0.
\end{multline}
The sign factors are free from the indices $j$ and $k$.
Multiplying $D'_1(u)D_3(v)$ from the
right, we obtain \eqref{F1E2}.

By \eqref{commurelation} again, for $1{\leq}i,k\leq\mu_2$, $1{\leq}j\leq\mu_1$ and $1{\leq}h\leq\mu_3$,
we have
\[
(u-v)[t_{\mu_1{+}i,j}(u),t^{\prime}_{\mu_1{+}\mu_2{+}h,\mu_1{+}k}(v)]=-(-1)^{|i|_2|j|_1{+}|i|_2|h|_3{+}|j|_1|h|_3}\delta_{ik}\sum_{s=1}^{M{+}N}
t^{\prime}_{\mu_1{+}\mu_2{+}h,s}(v)t_{s,j}(u).
\]
Substituting by \eqref{T11}--\eqref{TP31}, we have
\begin{multline}\label{F1F2-1}
	(u-v)D'_{3;h,q}(v)F_{1;i,p}(u),F_{2;q,k}(v)]D_{1;p,j}(u)=(-1)^{|i|_2|q|_3+|p|_1|q|_3+|p|_1|k|_2}\delta_{ik}\\
	D'_{3;h,q}(v)\left[F'_{3,1;q,p}(v)
	-F_{2;q,r}(v)F_{1;r,p}(u)
	+F_{3,1;q,p}(u)\right]D_{1;p,j}(u),
\end{multline}
The sign factors are free from the indices $h$ and $j$. Canceling $D'_{1}(u)$ from the right and $D_3(v)$ from the left on both sides of \eqref{F1F2-1} and dividing both sides by $u-v$, we have deduced 
\begin{multline}\label{F1F2-2}
	(u-v)[F_{1;i,p}(u),F_{2;q,k}(v)]=(-1)^{|i|_2|q|_3+|p|_1|q|_3+|p|_1|k|_2}\delta_{ik}\\
	\left[F'_{3,1;q,p}(v)
	-F_{2;q,r}(v)F_{1;r,p}(u)
	+F_{3,1;q,p}(u)\right].
\end{multline}

Substitue $F'_{3,1;q,p}(v)=\sum_{r=1}^{\mu_2}F_{2;q,r}(v)F_{1;r,p}(v)-F_{3,1;q,p}(v)$ obtained by $F'_{3,1}(u)=F_2(u)F_1(u)-F_{3,1}(u)$
into the identity \eqref{F1F2-2},
it gives \eqref{F1F2}.
 
To show \eqref{F2F3},
we take the coefficient of $u^0$ in the identity \eqref{D3F2}, and then obtain
\begin{equation}\label{u0-coeff}
[F^{(1)}_{2;i,j}(u), D^{\prime}_{3;h,k}(v)]=(-1)^{|h|_3|i|_3+|h|_3|j|_2+|j|_2|k|_3}\del_{ik}D^{\prime}_{3;h,m}(v)F_{2;m,j}(v).
\end{equation}

 By \eqref{commurelation}, \eqref{T31} and \eqref{TP32}, we have
\[
[t_{\mu_1+\mu_2+i,j}(u),t^{\prime}_{\mu_1+\mu_2+h,\mu_1+k}(v)]=[F_{3,1;i,p}(u)D_{1;p,j}(u),-D'_{3;h,q}(v)F_{2;q,k}(v)]=0.
\] 
Then associated to \eqref{D1F2} and the fact that $D_{1;i,j}(u)$, $D^{\prime}_{3;h,k}(v)$ supercommute,
multiplying $D'_1(u)$ from the right,
we have 
$[F_{3,1;i,j}(u),D'_{3;h,q}(v)F_{2;q,k}(v)]=0$,
which may be written as \begin{multline*}
	[F_{3,1;i,j}(u),D'_{3;h,q}(v)]F_{2;q,k}(v)\\
	+(-1)^{(|i|_3+|j|_1)(|h|_3+|q|_3)}D'_{3;h,q}(v)[F_{3,1;i,j}(u),F_{2;q,k}(v)]=0.
\end{multline*}
Then associated to \eqref{D3F31},
we obtain
\begin{multline*}
	(-1)^{|i|_3|q|_3+|j|_1|q|_3}D'_{3;h,q}(v)[F_{3,1;i,j}(u),F_{2;q,k}(v)]=\\	(-1)^{|j|_1|i|_3+|j|_1|r|_2+|m|_3|r|_2+|m|_3|j|_1}D^{\prime}_{3;h,m}(v)[F_{1;r,j}(u),F_{2;m,r}(v)]F_{2;i,k}(v),
\end{multline*}
and here the sign is free to $h$,
then \eqref{F2F3} is given.

At last,
to obtain \eqref{F1F31},
we take firstly the coefficient of $u^{0}$ in \eqref{F1F2}, and get
\begin{equation}\label{F1F31-1}
[F^{(1)}_{1;i,k},F_{2;h,j}(v)]=(-1)^{|i|_2|k|_1{+}|i|_2|h|_3{+}|k|_1|h|_3}\delta_{ij}F'_{3,1;h,k}(v).
\end{equation}

Taking the coefficient of $v^{0}$ in \eqref{d1f1}, we have
\begin{equation}\label{F1F31-2}
	[D_{1;i,j}(u),F^{(1)}_{1;a,k}]=-(-1)^{|i|_1|j|_1{+}|a|_2|i|_1{+}|a|_2|j|_1}\delta_{ik}F_{1;a,p}(u)D_{1;p,j}(u).
\end{equation}

Together with the super Jacobi identity and the relation \eqref{D1F2},
\eqref{F1F31-1} and \eqref{F1F31-2} imply that
\begin{multline}\label{F1F31-3}
	D_{1;i,j}(u)F'_{3,1;h,k}(v)=(-1)^{(|i|_1+|j|_1)(|h|_3+|k|_1)}F'_{3,1;h,k}(v)D_{1;i,j}(u)\\
-(-1)^{|a|_2|h|_3{+}|i|_1|h|_3{+}|i|_1|j|_1{+}|p|_1|a|_2{+}|p|_1|h|_3{+}|j|_1|h|_3}\delta_{ik}[F_{1;a,p}(u),F_{2;h,a}(v)]D_{1;p,j}(u).
\end{multline}

By \eqref{commurelation}, \eqref{T21} and \eqref{T31},
we have
\begin{multline*}
[t_{\mu_1{+}i,j}(u),t'_{\mu_1{+}\mu_2{+}h,k}(v)]
=[F_{1;i,p}(u)D_{1;p,j}(u),D'_{3;h,q}(v)F'_{3,1;q,k}(v)]=0,
\end{multline*}
then $D'_3$ supercommutes with $D_1$ and $F_1$,
then multiplying $D_3(u)$ from the left,
and computing the bracket,
we obtain
\begin{multline}\label{F1F31-4}
F_{1;i,g}(u)D_{1;g,j}(u)F'_{3,1;t,k}(v)
	-(-1)^{(|i|_2+|j|_1)(|t|_3+|k|_1)}F'_{3,1;t,k}(v)F_{1;i,g}(u)D_{1;g,j}(u)=0,
\end{multline}
Then substituting \eqref{F1F31-3} into the first term of 
\eqref{F1F31-4}, 
we have
\begin{multline*}
(-1)^{|g|_1(|t|_3+|k|_1)}\left[F_{1;i,g}(u),F'_{3,1;t,k}(v)\right]D_{1;g,j}(u)\\
\\
=(-1)^{|a|_2|t|_3{+}|k|_1|t|_3{+}|p|_1|a|_2{+}|p|_1|t|_3}F_{1;i,k}(u)[F_{1;a,p}(u),F_{2;t,a}(v)]D_{1;p,j}(u)
\end{multline*}
The sign is free for $j$,
then multiplying $D'_1(u)$ from the right,
it establishes \eqref{F1F31}.
\end{proof}

On the one hand,
the following Proposition \ref{3DE} can be proved in a similar way with Proposition \ref{3DF} and \ref{3FF}.
Such as, we can use \eqref{commurelation} to obtain $$[t_{i,j}(u),t^{\prime}_{\mu_1+h,\mu_1+\mu_2+k}(v)]{=}0\quad\mbox{and}\quad [t_{i,\mu_1+j}(u),t^{\prime}_{\mu_1+h,\mu_1+\mu_2+k}(v)]{=}0,$$
then substituting by \eqref{T11} and \eqref{TP23}, 
\eqref{T12} and \eqref{TP33}, respectively, 
it gives the relation \eqref{D1E2} and \eqref{D3E1} with the fact that $D_{1;i,j}(u)$, $D^{\prime}_{3;h,k}(v)$ supercommute.
On the other hand,
applying the automorphism $\zeta_{N|M}$ to the above corresponding identities in $Y_{\overleftarrow{\mu}}=Y_{(\mu_3,\mu_2,\mu_1)}=Y_{N|M}$,
Proposition \ref{3DE} also can be deduced.

\begin{proposition}\label{3DE}
	The following identities hold in $Y_{(\mu_1,\mu_2,\mu_3)}((u^{-1},v^{-1}))$:
	\begin{eqnarray}
		\label{D1E2} [D_{1;i,j}(u),E_{2;h,k}(v)]{=}&0\quad \mbox{for}~ 1{\leq}i,j{\leq}\mu_1, \,1{\leq}h{\leq}\mu_2,\,1{\leq}k{\leq}\mu_3,\\
		\label{D3E1} [E_{1;i,j}(u),D'_{3;h,k}(v)]{=}&0\quad \mbox{for}~  1{\leq}i{\leq}\mu_1,\,1{\leq}j{\leq}\mu_2,\,1{\leq}h,k{\leq}\mu_3,\\
		\label{E1F2} [E_{1;i,j}(u), F_{2;h,k}(v)]{=}&0\quad \mbox{for}~ 1{\leq}i{\leq}\mu_1, \,1{\leq}j,k{\leq}\mu_2,\,1{\leq}h{\leq}\mu_3,\\
		\label{D3E2} [E_{2;i,j}(u),D^{\prime}_{3;h,k}(v)]{=}&\dfrac{(-1)^{|j|_3|h|_3}}{(u-v)}\delta_{jh}\big(E_{2;i,g}(u)-E_{2;i,g}(v)\big)D^{\prime}_{3;g,k}(v),\\
		\label{D3E13}[E_{1,3;i,j}(u),D^{\prime}_{3;h,k}(v)]{=}&(-1)^{1{+}|g|_2{+}|j|_3|h|_3}\delta_{hj}\big[E_{1;i,q}(u),E_{2;q,g}(v)\big]D^{\prime}_{3;g,k}(v)\\
		\label{D1E13}[D_{1;i,j}(u),E'_{1,3;h,k}(v)]{=}&(-1)^{1{+}|g|_2{+}|j|_1|h|_1}\delta_{hj}D_{1;i,p}(u)[E_{1;p,q}(v),E_{2;q,k}(v)],	\end{eqnarray}
	\begin{multline}\label{E1E2}
		(u-v)[E_{1;i,j}(u), E_{2;h,k}(v)] = \\
		(-1)^{|j|_2|h|_2}\delta_{hj}\big\lbrace\big(E_{1;i,q}(u)-E_{1;i,q}(v)\big)E_{2;q,k}(v)
		+ E_{1,3;i,k}(v) - E_{1,3;i,k}(u)\big\rbrace,
	\end{multline}
	\begin{multline}\label{61c}
		[E_{1,3;i,j}(u), E_{2;h,k}(v)] =\\
		(-1)^{|i|_1|j|_3+|i|_1|h|_2+|h|_2|j|_3+|g|_2}
		E_{2;h,j}(v) [E_{1;i,g}(u), E_{2;g,k}(v)],
	\end{multline}
	\begin{multline}\label{61d}
		\sum_{q=1}^{\mu_2}[E_{1;i,j}(u), E'_{1,3;h,k}(v)] =\\
		(-1)^{|h|_1|j|_2+|j|_2|k|_3+|h|_1|k|_3+|g|_2}
		[E_{1;i,g}(u), E_{2;g,k}(v)] E_{1;h,j}(u).
	\end{multline}
\end{proposition}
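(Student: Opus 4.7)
The plan is to deduce Proposition~\ref{3DE} from the already-established $F$-side identities in Propositions~\ref{3DF} and~\ref{3FF} by means of the swap isomorphism $\zeta_{N|M}\colon Y_{N|M}(\sd)\to Y_{M|N}(\so)$ of Proposition~\ref{swap}. Recall from \eqref{zd}--\eqref{szf} that $\zeta$ exchanges $E$'s and $F$'s up to a sign, reverses block indices via $a\mapsto n+1-a$, and reverses entries via $i\mapsto\mu_a+1-i$, while by \eqref{parity} it swaps $0$'s and $1$'s in the underlying sequence. Thus, once a relation holds in $Y_{\overleftarrow{\mu}}(\sd)=Y_{(\mu_3,\mu_2,\mu_1)}(\sd)$ for the $F$'s, pulling it back through $\zeta_{N|M}$ should produce exactly the corresponding $E$-relation in $Y_{(\mu_1,\mu_2,\mu_3)}(\so)$, with parities correctly translated by \eqref{parity}.

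Concretely, I would first dispose of the vanishing relations \eqref{D1E2}, \eqref{D3E1} and \eqref{E1F2}, obtaining them as the $\zeta_{N|M}$-images of \eqref{D1F2}, \eqref{F1D3} and \eqref{F1E2}, respectively. For \eqref{D3E2}, one may either pull back \eqref{D3F2} under $\zeta_{N|M}$, or---preferably for transparency---apply the shift homomorphism $\psi_{\mu_1}$ from Proposition~\ref{shift} to \eqref{e1d2} in $Y_{(\mu_2,\mu_3)}$ and invoke \eqref{pqele}. For \eqref{E1E2}, the cleanest route is to pull back \eqref{F1F2} via $\zeta_{N|M}$; alternatively, one can mirror the proof of \eqref{f1f1} by starting from $(u-v)^2[t_{i,\mu_1+j}(u),t'_{\mu_1+h,\mu_1+\mu_2+k}(v)]=0$, substituting the Gauss decomposition identities \eqref{T12} and \eqref{TP23}, using \eqref{D1E2}, \eqref{D3E1} and \eqref{E1F2} to move factors past one another, and then running the induction on total degree with the homogeneous components $\{\,\cdot\,\}_d$, exactly as in the proof of \eqref{f1f1}.

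The remaining relations are quickly reduced to what we already have. For \eqref{D3E13}, I would rewrite $E_{1,3;i,j}(u) = (-1)^{|q|_2}[E_{1;i,q}(u), E^{(1)}_{2;q,j}]$ via \eqref{terpow1}, apply the super Jacobi identity, and use \eqref{D3E1} together with the $u^{0}$-coefficient of \eqref{D3E2}; this is precisely the $\zeta$-mirror of the derivation of \eqref{D3F31}. For \eqref{D1E13}, use $E'_{1,3}(v)=E_1(v)E_2(v)-E_{1,3}(v)$ (from the Gauss inversion, cf.\ \eqref{einverse}) to reduce to commutators of $D_1$ with $E_1$ and $E_2$, apply \eqref{d1e1} and \eqref{D1E2}, and combine via the super Jacobi identity. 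Finally, \eqref{61c} and \eqref{61d} follow the templates of \eqref{F2F3} and \eqref{F1F31}: extract appropriate coefficients of \eqref{E1E2} in $u^0$ or $v^0$, then combine with the vanishing relations and the super Jacobi identity; or pull back \eqref{F2F3} and \eqref{F1F31} directly under $\zeta_{N|M}$.

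The main obstacle is the sign bookkeeping. The map $\zeta$ inserts a minus on each $E$ and $F$, reverses positional indices, and swaps parities via \eqref{parity}; in characteristic $p>0$---and especially when $p=2$---one cannot collapse a sign by invoking ``$a=-a\Rightarrow a=0$'' as over $\CC$, so every $(-1)^{|\cdot|}$ must be tracked explicitly, in line with the introductory warnings of the paper. Moreover, the $\{\,\cdot\,\}_d$ induction needed to establish \eqref{E1E2} directly (rather than via $\zeta$) requires verifying that the vanishing of a base bracket at $u=v$ forces divisibility by $(u-v)$, which must be justified without appeal to characteristic-zero cancellations; this is the only step that demands genuine care beyond routine symbol shuffling.
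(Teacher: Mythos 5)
Your proposal follows exactly the two routes the paper itself offers for this proposition (whose ``proof'' in the paper is only the paragraph preceding it): either mimic the direct Gauss-decomposition computations of Propositions~\ref{3DF} and~\ref{3FF}, or pull the $F$-side identities back through the swap isomorphism; so in substance you are reproducing the intended argument, and your treatment of \eqref{D3E2}, \eqref{D3E13}, \eqref{D1E13}, \eqref{E1E2}, \eqref{61c} and \eqref{61d} is a faithful mirror of the $F$-side derivations. One concrete slip: \eqref{E1F2} is \emph{not} the $\zeta$-image of \eqref{F1E2}. Since $\zeta_{M|N}(E_{1})=-\overleftarrow{F}_{2}$ and $\zeta_{M|N}(F_{2})=-\overleftarrow{E}_{1}$, the relation $[E_{1},F_{2}]=0$ in $Y_{(\mu_1,\mu_2,\mu_3)}(\so)$ corresponds to $[\overleftarrow{F}_{2},\overleftarrow{E}_{1}]=0$ in $Y_{(\mu_3,\mu_2,\mu_1)}(\sd)$, whereas \eqref{F1E2} supplies $[\overleftarrow{F}_{1},\overleftarrow{E}_{2}]=0$; by super-antisymmetry the swap map carries \eqref{F1E2} back to itself, not to \eqref{E1F2}. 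So \eqref{E1F2} needs its own direct argument, namely the analogue of the proof of \eqref{F1E2}: start from $[t_{i,\mu_1+j}(u),t'_{\mu_1+\mu_2+h,\mu_1+k}(v)]=0$ (valid by \eqref{commurelation} since the indices never match), substitute \eqref{T12} and \eqref{TP32}, and strip off $D_1$ and $D'_3$ using \eqref{D1F2}, \eqref{D3E1} and the supercommutativity of $D_1$ with $D'_3$. A second, harmless, transposition: under $\zeta_{N|M}$ it is \eqref{F1D3} that yields \eqref{D1E2} and \eqref{D1F2} that yields \eqref{D3E1}, not the other way around as you wrote; since both source relations are available this does not affect the conclusion.
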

\section{The general case}\label{sec:general}
Recall that we are aim to obtain the defining relations of $Y_{\mu}(\so)$ in terms of those above parabolic generators associated to general $\mu=(\mu_1,\cdots,\mu_n)$ and $\so$.
Except for the identities in Proposition \ref{dd0},
many of the relations among the functions of $D,E$ and $F$ in $Y_{\mu}(\so)$ are given by the ones in $Y_{\mu_1,\mu_2}[[u^{-1},v^{-1}]]$ and $Y_{\mu_1,\mu_2,\mu_3}[[u^{-1},v^{-1}]]$ together with the shift maps and swap maps.
Roughly speaking, together with Proposition \ref{dd0}, \eqref{d1e1} and \eqref{d2e1} give \eqref{gde},
\eqref{d1f1} and \eqref{d2f1} give \eqref{gdf},
\eqref{E1E2} and \eqref{F1F2} give \eqref{gee-1} and \eqref{gff-1}, respectively.
\eqref{gef} follows from \eqref{e1f1}, \eqref{F1E2} and \eqref{E1F2}.
Applying to \eqref{e1e1} and \eqref{f1f1} produce \eqref{gee} and \eqref{gff}, respectively.
The relations \eqref{gee-1} and \eqref{gff-1} are followed from \eqref{E1E2} and \eqref{F1F2},
respectively.
Then we just go through \eqref{serre-F}, \eqref{super-F} and \eqref{superserre-F} in the following lemma,
which is a parabolic version of \cite[Lemma 3.7]{CH}, in series forms in $Y_{\mu}(\so)$.
Note that \eqref{superserre-E} and \eqref{superserre-F} are called the super Serre relations for $n\geq 4$.
\begin{lemma}\label{identity-mu}
The following relations
	 hold in Yangian $Y_{\mu}(\so)$:
\begin{equation}\label{EEFF}
[E_{a;i,j}(v),E_{a;h,k}(v)]=0,\quad [F_{a;i,j}(v),F_{a;h,k}(v)]=0,	
\end{equation} 
\begin{multline}\label{gde}
(u-v)[D_{a;i,j}(u), E_{b;h,k}(v)]
=(-1)^{|h|_a|j|_a}\delta_{a,b}\delta_{hj}\sum_{p=1}^{\mu_a}D_{a;i,p}(u)\big(E_{b;p,k}(v)-E_{b;p,k}(u)\big)\\
	+\delta_{a,b+1}(-1)^{|h|_b|k|_{a}+|h|_b|j|_a+|j|_a|k|_{a}}
	 D_{a;i,k}(u)\big(E_{b;h,j}(u)-E_{b;h,j}(v)\big),
\end{multline}
\begin{multline}\label{gdf}
(u-v)[D_{a;i,j}(u), F_{b;h,k}(v)]
{=}(-1)^{|i|_a|j|_a+|h|_{a+1}|i|_a+|h|_{a+1}|j|_a}\delta_{a,b}\delta_{ik}\sum_{p=1}^{\mu_1}\big(F_{b;h,p}(u)-F_{b;h,p}(v)\big)\\D_{a;p,j}(u)
+(-1)^{|h|_a|k|_{a-1}+|h|_a|j|_a+|j|_a|k|_{a-1}}\delta_{a,b+1}\big(F_{b;i,k}(v){-}F_{b;i,k}(u)\big)D_{a;h,j}(u), 
\end{multline}
\begin{multline}\label{gef}
(u-v)[E_{a;i,j}(u), F_{b;h,k}(v)]
=\delta_{a,b}\big\lbrace(-1)^{|h|_{a+1}|i|_a+|i|_a|j|_{a+1}+|h|_{a+1}|j|_a}D_{a+1;h,j}(u) D^{\prime}_{a;i,k}(u)\\
-(-1)^{|h|_{a+1}|k|_a+|j|_{a+1}|k|_a+|h|_{a+1}|j|_{a+1}}
D^{\prime}_{a;i,k}(v) D_{a+1;h,j}(v)\big\rbrace,
\end{multline}  
\begin{multline}\label{gee}       
(u-v)[E_{a;i,j}(u), E_{a;h,k}(v)]=(-1)^{|h|_a|j|_{a+1}+|j|_{a+1}|k|_{a+1}+|h|_a|k|_{a+1}}\\
\big(E_{a;i,k}(u)-E_{a;i,k}(v)\big)\big(E_{a;h,j}(u)-E_{a;h,j}(v)\big),\end{multline}
\begin{multline}\label{gff}	
(u-v)[F_{a;i,j}(u), F_{a;h,k}(v)]=-(-1)^{|i|_{a+1}|j|_a+|h|_{a+1}|i|_{a+1}+|h|_{a+1}|j|_a}\\
\big(F_{a;h,j}(u)-F_{a;h,j}(v)\big)\big(F_{a;i,k}(u)-F_{a;i,k}(v)\big),
\end{multline}
\begin{multline}\label{gee-1}
(u-v)[E_{a;i,j}(u), E_{a+1;h,k}(v)]=(-1)^{|j|_{a+1}|h|_{a+1}}\delta_{h,j}\\
\big\lbrace\big(E_{a;i,q}(u)-E_{a;i,q}(v)\big)E_{a+1;q,k}(v)+E_{a,a+2;i,k}(v)-E_{a,a+2;i,k}(u)\big\rbrace,
\end{multline}
\begin{multline}\label{gff-1}
(u-v)[F_{a;i,j}(u),F_{a+1;h,k}(v)]
=(-1)^{|i|_{a+1}|j|_a+|i|_{a+1}|h|_{a+2}+|j|_a|h|_{a+2}}
\delta_{i,k}\\
\big\lbrace\sum_{q=1}^{\mu_2}F_{a+1;h,q}(v)\big(F_{a;q,j}(v)-F_{a;q,j}(u)\big)-F_{a+2,a;h,j}(v)+F_{a+2,a;h,j}(u)\big\rbrace,
\end{multline}
\begin{equation}\label{serre-E}
	\big[[E_{a;i,j}(u),E_{b;h,k}(v)],E_{b;f,g}(v)\big]=0,\end{equation}
\begin{equation}\label{super-E}
	\big[[E_{a;i,j}(u),E_{b;h,k}(v)],E_{b;f,g}(w)\big]{+}
	\big[[E_{a;i,j}(u),E_{b;h,k}(w)],E_{b;f,g}(v)\big]{=}0,\end{equation}
\begin{equation}\label{serre-F}
	\big[[F_{a;i,j}(u),F_{b;h,k}(v)],F_{b;f,g}(v)\big]=0,\end{equation}
\begin{equation}\label{super-F}
	\big[[F_{a;i,j}(u),F_{b;h,k}(v)],F_{b;f,g}(w)\big]+\big[[F_{a;i,j}(u),F_{b;h,k}(w)],F_{b;f,g}(v)\big]=0
\end{equation}
where $|a-b|=1$.
\begin{equation}\label{superserre-E}
\big[\,[E_{a;i,f_1}^{(r)},E_{a+1;f_2,j}^{(1)}],[E_{a+1;h,g_1}^{(1)},E_{a+2;g_2,k}^{(s)}]\,\big]=0,
\end{equation}
\begin{equation}\label{superserre-F}
\big[\,[F_{a;f_1,i}^{(r)},F_{a+1;j,f_2}^{(1)}],[F_{a+1;g_1,h}^{(1)},F_{a+2;k,g_2}^{(s)}]\,\big]=0
\end{equation}
where $n\geq 4$.
\end{lemma}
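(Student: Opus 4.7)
The overall strategy is a bootstrap: almost every relation in the lemma is obtained by pulling back an identity from Proposition \ref{n=2} (the $n=2$ case) or from Propositions \ref{3DF}, \ref{3FF}, \ref{3DE} (the $n=3$ case) along the shift map $\psi_L$ of Proposition \ref{shift} with $L=\mu_1+\cdots+\mu_{a-1}$, identifying $\psi_L(E_1), \psi_L(F_1), \psi_L(D_1)$ with $E_a, F_a, D_a$ via \eqref{pqele}. Whenever two block-indices involved differ by more than one, both sides of the proposed relation vanish by Corollary \ref{cor:commu} together with Proposition \ref{dd0}. The swap map $\zeta_{M|N}$ of Proposition \ref{swap} (together with Corollary \ref{zetac}) will convert $E$-type relations into $F$-type relations where convenient, making many of the $F$-side identities formal corollaries of their $E$-side analogues.

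The quadratic relations \eqref{EEFF}--\eqref{gff-1} then follow directly from this reduction: \eqref{EEFF} is the $\psi_L$-image of the identity \eqref{FF} in Remark \ref{re:FF}; \eqref{gde} and \eqref{gdf} decompose into a same-block piece (from \eqref{d1e1}, \eqref{d1f1}) and an adjacent-block piece (from \eqref{d2e1}, \eqref{d2f1}) of Proposition \ref{n=2}, each shifted; \eqref{gef} lifts \eqref{e1f1} on the diagonal and \eqref{F1E2}, \eqref{E1F2} off it; \eqref{gee}, \eqref{gff} are the lifts of \eqref{e1e1}, \eqref{f1f1}; and \eqref{gee-1}, \eqref{gff-1} are the lifts of \eqref{E1E2}, \eqref{F1F2} respectively.

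The cubic Serre-type relations \eqref{serre-E}--\eqref{super-F} will be derived by a super-Jacobi argument. For \eqref{serre-E}, multiply \eqref{gee-1} by $(u-v)$ and bracket the right-hand side with $E_{b;f,g}(v)$; every resulting term contains an inner bracket of two $E_b$'s at the common variable $v$ and is killed by the same-block commutativity \eqref{EEFF}. The symmetric version \eqref{super-E} comes from the same calculation symmetrized in the pair $(v,w)$, again using only same-block vanishing. The $F$-versions \eqref{serre-F}, \eqref{super-F} follow identically from \eqref{gff-1} and \eqref{EEFF}, or alternatively by applying $\zeta_{M|N}$.

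The main obstacle is the quartic super Serre relations \eqref{superserre-E}, \eqref{superserre-F}, which involve four generators spanning three consecutive blocks and which, over $\CC$, are routinely recovered from cubic relations via the cancellation $X=-X\Rightarrow X=0$ — an argument that collapses when $p=2$. After $\psi_L$-reduction it suffices to treat $a=1$. The two inner brackets are precisely the Jacobi-type presentations of $E_{1,3}$ and $E_{2,4}$ supplied by \eqref{terpow1} (for suitable contraction indices), so the claim reduces to the commutativity of these ``long-distance'' generators. This in turn will be pushed down to \eqref{61c}--\eqref{61d} of Proposition \ref{3DE} through repeated super-Jacobi expansions and \eqref{EEFF}. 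The delicate part in positive characteristic is to carry all $\ZZ_2$-sign factors explicitly and to produce each cancellation from a structural identity rather than from $2\neq 0$; this is especially true for \eqref{superserre-F}, where, as emphasized in the introduction, the $F$-side signs are more involved than their $E$-side counterparts and require extra bookkeeping throughout.
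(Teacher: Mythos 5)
Your reduction of the quadratic relations \eqref{EEFF}--\eqref{gff-1} to the $n=2,3$ propositions via $\psi_L$ and $\zeta_{M|N}$ matches the paper. The genuine gap is in your treatment of the cubic Serre relations \eqref{serre-E}--\eqref{super-F}. You claim that after substituting \eqref{gee-1} and bracketing with $E_{b;f,g}(v)$, ``every resulting term contains an inner bracket of two $E_b$'s at the common variable $v$'' and hence dies by \eqref{EEFF}. This is false: the super-Leibniz expansion of $\big[(E_{a;i,q}(u)-E_{a;i,q}(v))E_{a+1;q,k}(v),\,E_{a+1;f,g}(v)\big]$ also produces the cross-block term $\pm\,[E_{a;i,q}(u)-E_{a;i,q}(v),E_{a+1;f,g}(v)]\,E_{a+1;q,k}(v)$, and the contribution $[E_{a,a+2;i,k}(v)-E_{a,a+2;i,k}(u),E_{a+1;f,g}(v)]$ is nonzero by \eqref{61c}; none of these is killed by same-block commutativity, and showing their sum vanishes is precisely the content of the statement, not a consequence of \eqref{EEFF}. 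The paper's actual argument is different in kind: super-Jacobi plus \eqref{EEFF} first yields a symmetry that, combined with \eqref{F1F2}, reduces to the case $i=k=g$; in that case one derives a functional identity of the form $(u-v-\epsilon)\,X(u,v)=-\epsilon\,Y(v)$ with $\epsilon=\pm 1$ and $Y$ independent of $u$ (see \eqref{serre-F-1}); specializing $u=v+\epsilon$ forces $Y=0$, and then choosing $u$ with $u-v-\epsilon$ invertible forces $X=0$. Without this specialization step your proof of \eqref{serre-E} and \eqref{serre-F} does not close; likewise \eqref{super-E} and \eqref{super-F} require the long explicit verification that $(u-w)(v-w)(u-v)\big[[F_{1}(u),F_{2}(v)],F_{2}(w)\big]$ is symmetric in $v$ and $w$, carried out term by term via \eqref{F1F2}, \eqref{F2F3} and \eqref{f1f1} -- not merely ``the same calculation symmetrized'' with same-block vanishing.

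On the quartic relations you correctly flag the failure of the $X=-X\Rightarrow X=0$ argument when $p=2$, but your plan (reduce to commutativity of the long generators $E_{1,3}$ and $E_{2,4}$ and push down to \eqref{61c}--\eqref{61d}) is not what carries the proof, and it is not clear it can. The paper instead extracts coefficients from \eqref{super-F} and \eqref{serre-F} to obtain \eqref{rst-coeffi} and \eqref{rtt-coeffi-F}, uses \eqref{F1F2} to reduce to $f_1=f_2=f$ and $g_1=g_2=g$, derives the identity \eqref{even-odd} which disposes of all cases except $g=j$, $f=h$, and finally splits on the parity of $F^{(1)}_{2;g,f}$, handling the even case in characteristic $2$ by Peng's direct argument rather than by any sign cancellation. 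You would need to supply these steps explicitly for your outline to become a proof.
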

\begin{proof}
We just need to consider in detail \eqref{serre-F} and \eqref{super-F} in the case $a=1,b=2$ in $Y_{(\mu_1,\mu_2,\mu_3)}[[u^{-1},v^{-1},w^{-1}]]$
and \eqref{superserre-F} in the case $n=4$,
while \eqref{serre-E}, \eqref{super-E} and \eqref{superserre-E} are similar.

To show \eqref{serre-F},
we start from the following relation
$$\big[[F_{1;i,j}(u),F_{2;h,k}(v)],F_{2;f,g}(v)\big]{=}(-1)^{(|j|_1{+}|i|_2)(|k|_2{+}|h|_3)}\big[F_{2;h,k}(v),[F_{1;i,j}(u),F_{2;f,g}(v)]\big]$$  
obtained by the super Jacobi identity  and the relation \eqref{EEFF},
then together with \eqref{F1F2},
it suffices to prove the case that $i=k=g$.
In this case,
we have 
\begin{multline*}
(u-v)\big[[F_{1;i,j}(u),F_{2;h,i}(v)],F_{2;f,i}(v)\big]
=(-1)^{|i|_2|h|_3{+}|h|_3|f|_3{+}|i|_2|f|_3}\\
\left(\big[[F_{1;i,j}(u),F_{2;f,i}(v)],F_{2;h,i}(v)\big]
-\big[[F_{1;i,j}(v),F_{2;f,i}(v)],F_{2;h,i}(v)\big]\right)
\end{multline*}
Thus we have 
\begin{multline}\label{serre-F-1}
(u-v-(-1)^{|i|_2|h|_3{+}|h|_3|f|_3{+}|i|_2|f|_3})\big[[F_{1;i,j}(u),F_{2;h,i}(v)],F_{2;f,i}(v)\big]\\
=-(-1)^{|i|_2|h|_3{+}|h|_3|f|_3{+}|i|_2|f|_3}\big[[F_{1;i,j}(v),F_{2;f,i}(v)],F_{2;h,i}(v)\big].
\end{multline}
Note that the right-hand side of \eqref{serre-F-1} is independent of the choice of $u$.
Specifying $u=v+(-1)^{|i|_2|h|_3{+}|h|_3|f|_3{+}|i|_2|f|_3}$ in \eqref{serre-F-1},
we have
\[0=-(-1)^{|i|_2|h|_3{+}|h|_3|f|_3{+}|i|_2|f|_3}\big[[F_{1;i,j}(v),F_{2;f,i}(v)],F_{2;h,i}(v)\big],\]
and hence for any $u$,
\[(u-v-(-1)^{|i|_2|h|_3{+}|h|_3|f|_3{+}|i|_2|f|_3})\big[[F_{1;i,j}(u),F_{2;h,i}(v)],F_{2;f,i}(v)\big]=0.\]
Choose $u$ such that $u-v-(-1)^{|i|_2|h|_3{+}|h|_3|f|_3{+}|i|_2|f|_3}$ is invertible,
and then \eqref{serre-F} follows.

To show \eqref{super-F},
it suffices to prove that
\begin{equation}\label{super-F-0}(u-w)(v-w)(u-v)\big[[F_{1;i,j}(u),F_{2;h,k}(v)],F_{2;f,g}(w)\big]\end{equation} is symmetric in $v$ and $w$.
As in the proof of \eqref{serre-F},
we also may asuume $i=k$ ,
and then computing the brackets by \eqref{F1F2},
we have 
\begin{multline*}
	(u-w)(v-w)(u-v)\big[[F_{1;i,j}(u),F_{2;h,i}(v)],F_{2;f,g}(w)\big]
	=(-1)^{|i|_2|j|_1+|i|_2|h|_3+|j|_1|h|_3}\\
(u-w)(v-w)\big[\sum_{q=1}^{\mu_2}\left(F_{2;h,q}(v)F_{1;q,j}(v)-F_{2;h,q}(v)F_{1;q,j}(u)\right)
+F_{3,1;h,j}(u)-F_{3,1;h,j}(v),F_{2;f,g}(w)\big].\end{multline*}
Owing to 
\begin{multline*}
\sum_{q=1}^{\mu_2}\big[F_{2;h,q}(v)F_{1;q,j}(v),F_{2;f,g}(w)\big]=F_{2;h,g}(v)\big[F_{1;g,j}(v),F_{2;f,g}(w)\big]\\
+\sum_{q=1}^{\mu_2}(-1)^{(|j|_1+|q|_2)(|f|_3+|g|_2)}\big[F_{2;h,q}(v),F_{2;f,g}(w)\big]F_{1;q,j}(v).
\end{multline*}
Then together with \eqref{F2F3},
\eqref{super-F-0} equals to 
\begin{align*}
&(-1)^{|i|_2|j|_1+|i|_2|h|_3+|j|_1|h|_3}
(u-w)(v-w)\\
&\{(-1)^{(|j|_1+|q|_2)(|f|_3+|g|_2)}\big[F_{2;h,q}(v),F_{2;f,g}(w)\big]F_{1;q,j}(v)\\
&-(-1)^{(|j|_1+|q|_2)(|f|_3+|g|_2)}\big[F_{2;h,q}(v),F_{2;f,g}(w)\big]F_{1;q,j}(u)\\
&+F_{2;h,g}(v)\big[F_{1;g,j}(v),F_{2;f,g}(w)\big]-F_{2;h,g}(v)\big[F_{1;g,j}(u),F_{2;f,g}(w)\big]\\
&-(-1)^{|h|_3|j|_1+|f|_3|h|_3+|j|_1|f|_3+|a|_2}\big[F_{2;f,a}(w),F_{1;a,j}(u)\big]F_{2;h,g}(w)\\
&+(-1)^{|h|_3|j|_1+|f|_3|h|_3+|j|_1|f|_3+|a|_2}\big[F_{2;f,a}(w),F_{1;a,j}(v)\big]F_{2;h,g}(w)\}\\
=&(-1)^{|i|_2|j|_1+|i|_2|h|_3+|j|_1|h|_3}
(u-w)(v-w)\\
&\{-(-1)^{(|j|_1+|h|_3)(|f|_3+|g|_2)}\big[F_{2;f,g}(w),F_{2;h,q}(v)\big]F_{1;q,j}(v)\\
&+(-1)^{(|j|_1+|h|_3)(|f|_3+|g|_2)}\big[F_{2;h,q}(v),F_{2;f,g}(w)\big]F_{1;q,j}(u)\\
&+F_{2;h,g}(v)\big[F_{1;g,j}(v),F_{2;f,g}(w)\big]-F_{2;h,g}(v)\big[F_{1;g,j}(u),F_{2;f,g}(w)\big]\\
&+(-1)^{|g|_2|j|_1+|f|_3|j|_1+|g|_2|f|_3+|a|_2}(-1)^{(|j|_1+|a|_2)(|f|_3+|a|_2)}F_{2;h,g}(w)\big[F_{1;a,j}(u),F_{2;f,a}(w)\big]\\
&-(-1)^{|g|_2|j|_1+|f|_3|j|_1+|g|_2|f|_3+|a|_2}(-1)^{(|j|_1+|a|_2)(|f|_3+|a|_2)}F_{2;h,g}(w)\big[F_{1;a,j}(v),F_{2;f,a}(w)\big]\}
\end{align*}
where the second identity is derived from the following  
\begin{multline*}
	[F_{2;h,i}(v),F_{1;i,j}(u)]F_{2;f,g}(v)=(-1)^{(|g|_2+|f|_3)(|j|_1+|h|_3)}F_{2;f,g}(v)[F_{2;h,i}(v),F_{1;i,j}(u)]
\end{multline*}
obtained by \eqref{serre-F}.

Then we use \eqref{f1f1} and \eqref{F1F2} to compute these brackets,
and continue to obtain that \eqref{super-F-0} equals to 
\begin{align*}
&\epsilon\{(u-w)\big(F_{2;h,g}(v)-F_{2;h,g}(w)\big)\big(F_{2;f,q}(w)-F_{2;f,q}(v)\big)F_{1;q,j}(v)\\
&-(u-w)\big(F_{2;h,g}(v)-F_{2;h,g}(w)\big)\big(F_{2;f,q}(w)-F_{2;f,q}(v)\big)F_{1;q,j}(u)\\
&+(u-w)F_{2;h,g}(v)\left[F_{2;f,q}(w)\big(F_{1;q,j}(w)-F_{1;q,j}(v)\big)-F_{3,1;f,j}(w)+F_{3,1;f,j}(v)\right]\\
&-(v-w)F_{2;h,g}(v)\left[F_{2;f,q}(w)\big(F_{1;q,j}(w)-F_{1;q,j}(u)\big)-F_{3,1;f,j}(w)+F_{3,1;f,j}(u)\right]\\
&+(v-w)F_{2;h,g}(w)\left[F_{2;f,q}(w)\big(F_{1;q,j}(w)-F_{1;q,j}(u)\big)-F_{3,1;f,j}(w)+F_{3,1;f,j}(u)\right]\\
&-(u-w)F_{2;h,g}(w)\left[F_{2;f,q}(w)\big(F_{1;q,j}(w)-F_{1;q,j}(v)\big)-F_{3,1;f,j}(w)+F_{3,1;f,j}(v)\right]\}
\end{align*}
where $\epsilon=(-1)^{|i|_2|j|_1+|i|_2|h|_3+|j|_1|h|_3+|j|_1|f|_3+|j|_1|g|_2+|g|_2|f|_3}$.

Opening the parentheses of the above identity,
then the part related to $F_{3,1;a,b}$ in the above identity is
\begin{align*}
&uF_{2;h,g}(v)F_{3,1;f,j}(v)+uF_{2;h,g}(w)F_{3,1;f,j}(w)-wF_{2;h,g}(v)F_{3,1;f,j}(v)-vF_{2;h,g}(w)F_{3,1;f,j}(w)\\
&{-}uF_{2;h,g}(v)F_{3,1;f,j}(w){-}uF_{2;h,g}(w)F_{3,1;f,j}(v){-}vF_{2;h,g}(v)F_{3,1;f,j}(u){-}wF_{2;h,g}(w)F_{3,1;f,j}(u)\\
&{+}wF_{2;h,g}(v)F_{3,1;f,j}(u){+}vF_{2;h,g}(w)F_{3,1;f,j}(u){+}vF_{2;h,g}(v)F_{3,1;f,j}(w){+}wF_{2;h,g}(w)F_{3,1;f,j}(v),
\end{align*}
and the other part is
\begin{align*}
	&vF_{2;h,g}(w)F_{2;f,q}(w)F_{1;q,j}(w)+wF_{2;h,g}(v)F_{2;f,q}(v)F_{1;q,j}(v)\\
	&-uF_{2;h,g}(v)F_{2;f,q}(v)F_{1;q,j}(v)	-uF_{2;h,g}(w)F_{2;f,q}(w)F_{1;q,j}(w)\\
	&+uF_{2;h,g}(w)F_{2;f,q}(v)F_{1;q,j}(v)+uF_{2;h,g}(v)F_{2;f,q}(w)F_{1;q,j}(w)\\
	&-wF_{2;h,g}(w)F_{2;f,q}(v)F_{1;q,j}(v)-vF_{2;h,g}(v)F_{2;f,q}(w)F_{1;q,j}(w)\\
	&-uF_{2;h,g}(v)F_{2;f,q}(w)F_{1;q,j}(u)-uF_{2;h,g}(w)F_{2;f,q}(v)F_{1;q,j}(u)\\
	&+uF_{2;h,g}(v)F_{2;f,q}(v)F_{1;q,j}(u)+uF_{2;h,g}(w)F_{2;f,q}(w)F_{1;q,j}(u)\\
	&-wF_{2;h,g}(v)F_{2;f,q}(v)F_{1;q,j}(u)-vF_{2;h,g}(w)F_{2;f,q}(w)F_{1;q,j}(u)\\
	&+wF_{2;h,g}(w)F_{2;f,q}(v)F_{1;q,j}(u)+vF_{2;h,g}(v)F_{2;f,q}(w)F_{1;q,j}(u)
\end{align*}
It follows that the resulting expression is indeed symmetric in $v$ and $w$,
and then the relation \eqref{super-F} holds.

To prove \eqref{superserre-F},
taking its coefficient of $u^{-r}v^{-s}w^{-t}$ in \eqref{super-F}, we have
\begin{align}\label{rst-coeffi}
	\left[\,[F_{a;i,j}^{(r)},F_{b;h,k}^{(s)}],F_{b;f,g}^{(t)}\,\right]+\left[\,[F_{a;i,j}^{(r)},F_{b;f,g}^{(t)}],F_{b;h,k}^{(s)}\,\right]=0,\quad\mbox{if~}|a-b|=1.
\end{align}
Then taking the $u^{-r}v^{-2t}$-coefficient in \eqref{serre-F} in conjunction with \eqref{rst-coeffi} gives the following \eqref{rtt-coeffi-F}.
\begin{equation}\label{rtt-coeffi-F}
	\big[[F_{a;i,j}^{(r)},F_{b;h,k}^{(t)}],F_{b;f,g}^{(t)}\big]=0.\end{equation}

The general case can be achieved by the map $\psi$,
so it suffices to prove the following special case of \eqref{superserre-F} for $n=4$:
\begin{equation}\label{superserre-spe}
	\big[\,[F_{1;f_1,i}^{(r)},F_{2;j,f_2}^{(1)}],[F_{2;g_1,h}^{(1)},F_{3;k,g_2}^{(s)}]\,\big]=0.
\end{equation}
To show \eqref{superserre-spe},
we may assume $f_1=f_2=f$ and $g_1=g_2=g$ by \eqref{F1F2}.
By \eqref{rtt-coeffi-F}, \eqref{F1F2} and super-Jacobi identity, we have:
\begin{equation}\label{even-odd}
	\big[\,[F_{1;f,i}^{(r)},F_{2;j,f}^{(1)}],[F_{2;g,h}^{(1)},F_{3;k,g}^{(s)}]\,\big]
	{=}{-}(-1)^{(|g|_3{+}|f|_2)}\delta_{g,j}\delta_{f,h}\big[\,[F_{1;f,i}^{(r)},F_{2;j,f}^{(1)}],[F_{2;g,h}^{(1)},F_{3;k,g}^{(s)}]\,\big],
\end{equation}
and then further assume that 
$g=j$ and $f=h$.
Thence it is enough to prove the following 
\begin{equation}\label{superserre-sspe}
	\big[\,[F_{1;f,i}^{(r)},F_{2;g,f}^{(1)}],[F_{2;g,f}^{(1)},F_{3;k,g}^{(s)}]\,\big]=0,
\end{equation}
and we consider the following two cases:
(i) $F_{2;g,f}^{(1)}$ is odd;
(ii) $F_{2;g,f}^{(1)}$ is even.
The proof of (i) is the same argument as in the proof of  (\cite[Lemma 6.4]{Peng16}),
and the case (ii) in $\Char\kk=p\neq 2$ is archieved by \eqref{even-odd}.
Now we assume that $p=2$ and then also Peng's argument (\cite[Lemma 6.4]{Peng16}) yields it.
\end{proof}

Then with the identity \eqref{gv-gu/u-v},
we can converts the relations in series form in Lemma \ref{identity-mu} into the desired form in the following Theorem \ref{parabolic},
which generalize \cite[Theorem 7.2]{Peng16} to a positive characteristic.
We need to state that the relations \eqref{coeffi-d}--\eqref{coeffi-superserre-F} are enough as defining relations of $Y_{\mu}(\so)$.
\begin{theorem}\label{parabolic}
Let $\mu=(\mu_1,\ldots,\mu_n)$ be a composition of $M+N$ and $\so$ be a $0^M1^N$-sequence. Associated to this $\mu$ and $\so$, the super Yangian $Y_\mu(\so)$ is generated by the parabolic generators
\begin{align*}
	&\lbrace D_{a;i,j}^{(r)}, D_{a;i,j}^{\prime(r)} \,|\, 1\leq a\leq n, 1\leq i,j\leq \mu_a, r\geq 0\rbrace,\\
	&\lbrace E_{a;i,j}^{(r)} \,|\, 1\leq a< n, 1\leq i\leq \mu_a, 1\leq j\leq\mu_{a+1}, r\geq 1\rbrace,\\
	&\lbrace F_{a;i,j}^{(r)} \,|\, 1\leq a< n, 1\leq i\leq\mu_{a+1}, 1\leq j\leq \mu_a, r\geq 1\rbrace,
\end{align*}
subject only to the relations \eqref{coeffi-d}-\eqref{coeffi-superserre-F}.	
\begin{eqnarray}
	\label{coeffi-d}D_{a;i,j}^{(0)}&=&\delta_{ij}\,,\\
	\label{coeffi-d-1}\sum_{p=1}^{\mu_a}\sum_{t=0}^{r}D_{a;i,p}^{(t)}D_{a;p,j}^{\prime (r-t)}&=&\delta_{r0}\delta_{ij},\end{eqnarray}
\begin{multline}\label{coeffi-d-2}
\big[D_{a;i,j}^{(r)},D_{b;h,k}^{(s)}\big]=
\delta_{ab}(-1)^{|i|_a|j|_a+|i|_a|h|_a+|j|_a|h|_a}\\
\sum_{t=0}^{min(r,s)-1}\big(D_{a;h,j}^{(t)}D_{a;i,k}^{(r+s-1-t)}-D_{a;h,j}^{(r+s-1-t)}D_{a;i,k}^{(t)}\big),
\end{multline}	

{\allowdisplaybreaks
	\begin{multline}\label{p-daeb}
		[D_{a;i,j}^{(r)}, E_{b;h,k}^{(s)}]
		=\delta_{a,b}\delta_{hj}(-1)^{|h|_a|j|_a}\sum_{p=1}^{\mu_a}\sum_{t=0}^{r-1} D_{a;i,p}^{(t)} E_{b;p,k}^{(r+s-1-t)}\\
		-\delta_{a,b+1}(-1)^{|h|_b|k|_a+|h|_b|j|_a+|j|_a|k|_a} \sum_{t=0}^{r-1} D_{a;i,k}^{(t)} E_{b;h,j}^{(r+s-1-t)},
	\end{multline}
	\begin{multline}\label{p-dafb}
		[D_{a;i,j}^{(r)}, F_{b;h,k}^{(s)}]
		=\delta_{a,b}(-1)^{|i|_a|j|_a+|h|_{a+1}|i|_a+|h|_{a+1}|j|_a}\sum_{p=1}^{\mu_a}\sum_{t=0}^{r-1} F_{b;h,p}^{(r+s-1-t)}D_{a;p,j}^{(t)}\\
		+\delta_{a,b+1}(-1)^{|h|_a|k|_b+|h|_a|j|_a+|j|_a|k|_b} \sum_{t=0}^{r-1} F_{b;i,k}^{(r+s-1-t)}D_{a;h,j}^{(t)},
	\end{multline}        
	\begin{multline}\label{p-eafb}
		[E_{a;i,j}^{(r)} , F_{b;h,k}^{(s)}]
		=\delta_{a,b}(-1)^{|h|_{a+1}|k|_a+|j|_{a+1}|k|_a+|h|_{a+1}|j|_{a+1}+1}
		\sum_{t=0}^{r+s-1} D_{a;i,k}^{\prime (r+s-1-t)} D_{a+1;h,j}^{(t)},       
	\end{multline}	
\begin{multline}\label{p-eaea}
	[E_{a;i,j}^{(r)} , E_{a;h,k}^{(s)}]
	=(-1)^{|h|_{a}|j|_{a+1}+|j|_{a+1}|k|_{a+1}+|h|_{a}|k|_{a+1}}\times\\
	\big( \sum_{t=1}^{s-1} E_{a;i,k}^{(r+s-1-t)} E_{a;h,j}^{(t)} 
	-\sum_{t=1}^{r-1} E_{a;i,k}^{(r+s-1-t)} E_{a;h,j}^{(t)}  \big),       
\end{multline} 
\begin{multline}\label{p-fafa}
	[F_{a;i,j}^{(r)} , F_{a;h,k}^{(s)}]
	=(-1)^{|h|_{a+1}|j|_{a}+|j|_{a}|k|_{a}+|h|_{a+1}|k|_{a}}\times\\
	\big( \sum_{t=1}^{r-1} F_{a;i,k}^{(r+s-1-t)} F_{a;h,j}^{(t)} 
	-\sum_{t=1}^{s-1} F_{a;i,k}^{(r+s-1-t)} F_{a;h,j}^{(t)}  \big),         
\end{multline} 
\begin{equation}\label{p-ee}
[E_{a;i,j}^{(r+1)}, E_{a+1;h,k}^{(s)}]-[E_{a;i,j}^{(r)}, E_{a+1;h,k}^{(s+1)}]
	=(-1)^{|j|_{a+1}|h|_{a+1}}\delta_{h,j}\sum_{q=1}^{\mu_{a+1}}E_{a;i,q}^{(r)}E_{a+1;q,k}^{(s)}\,,
\end{equation}
\begin{multline}\label{p-ff}
	[F_{a;i,j}^{(r+1)}, F_{a+1;h,k}^{(s)}]-[F_{a;i,j}^{(r)}, F_{a+1;h,k}^{(s+1)}]=\\
	(-1)^{|i|_{a+1}(|j|_{a}+|h|_{a+2})+|j|_a|h|_{a+2}+1}\delta_{i,k}\sum_{q=1}^{\mu_{a+1}}F_{a+1;h,q}^{(s)}F_{a;q,j}^{(r)}\,,
\end{multline}
\begin{align}
	\label{pc-ee}&[E_{a;i,j}^{(r)}, E_{b;h,k}^{(s)}] = 0
	\qquad\qquad\text{\;\;if\;\; $|b-a|>1$ \;\;or\;\; \;if\;\;$b=a+1$ and $h \neq j$},\\[3mm]
	\label{pc-ff}&[F_{a;i,j}^{(r)}, F_{b;h,k}^{(s)}] = 0
	\qquad\qquad\text{\;\;if\;\; $|b-a|>1$ \;\;or\;\; \;if\;\;$b=a+1$ and $i \neq k$},\\[3mm]\end{align}}
\begin{equation}\label{coeffi-serre-E}
	\big[[E_{a;i,j}^{(r)},E_{b;h,k}^{(t)}],E_{b;f,g}^{(t)}\big]=0,
\end{equation}
\begin{equation}\label{coeffi-serre-F}
	\big[[F_{a;i,j}^{(r)},F_{b;h,k}^{(t)}],F_{b;f,g}^{(t)}\big]=0,\end{equation}
\begin{equation}\label{coeffi-super-E}\big[E_{a;i,j}^{(r)},[E_{a;h,k}^{(s)},E_{b;f,g}^{(\ell)}]\big]+
	\big[E_{a;i,j}^{(s)},[E_{a;h,k}^{(r)},E_{b;f,g}^{(\ell)}]\big]=0, \end{equation}
\begin{equation}\label{coeffi-super-F}\big[F_{a;i,j}^{(r)},[F_{a;h,k}^{(s)},F_{b;f,g}^{(\ell)}]\big]+
	\big[F_{a;i,j}^{(s)},[F_{a;h,k}^{(r)},F_{b;f,g}^{(\ell)}]\big]=0\end{equation}
where $|a-b|=1$.	
\begin{equation}\label{coeffi-superserre-E}
	\big[\,[E_{a;i,f_1}^{(r)},E_{a+1;f_2,j}^{(1)}]\,,\,[E_{a+1;h,g_1}^{(1)},E_{a+2;g_2,k}^{(s)}]\,\big]=0,\end{equation}
\begin{equation}\label{coeffi-superserre-F}\big[\,[F_{a;i,f_1}^{(r)},F_{a+1;f_2,j}^{(1)}]\,,\,[F_{a+1;h,g_1}^{(1)},F_{a+2;g_2,k}^{(s)}]\,\big]=0\end{equation}
where $n\geq 4$.
	If $D_{a;i,j}^{(r)}$ appears on the left-hand side of the equation, then it holds for all $1\leq i,j\leq \mu_{a}$ and all $r\geq 0$; if $E_{a;h,k}^{(s)}$ appears on the left-hand side of the equation, then it holds for all $1\leq h\leq \mu_a, 1\leq k\leq \mu_{a+1}$ and all $s\geq 1$; if $F_{a;f,g}^{(\ell)}$ appears on the left-hand side of the equation, then it holds for all $1\leq g\leq \mu_a, 1\leq f\leq \mu_{a+1}$ and all $\ell\geq 1$.
\end{theorem}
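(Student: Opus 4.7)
The plan is to follow the standard strategy for establishing a presentation by generators and relations: construct a surjective homomorphism from the abstract algebra $\hat Y_\mu(\so)$ defined by the generators and relations \eqref{coeffi-d}--\eqref{coeffi-superserre-F} onto $Y_\mu(\so)$, then prove it is injective via a loop-filtration/PBW argument. For the surjection, Theorem \ref{gendef} shows the parabolic generators already generate $Y_\mu(\so)$, and Lemma \ref{identity-mu} (combined with \eqref{gv-gu/u-v} to extract coefficients from the series identities, together with Proposition \ref{dd0} for the $D$-relations and Corollary \ref{cor:commu} for the block-far commutativity \eqref{pc-ee}--\eqref{pc-ff}) shows every listed relation holds in $Y_\mu(\so)$. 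This produces a surjective superalgebra homomorphism $\pi:\hat Y_\mu(\so)\twoheadrightarrow Y_\mu(\so)$.

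For injectivity, I would equip $\hat Y_\mu(\so)$ with a loop filtration by declaring $\deg D_{a;i,j}^{(r)}=\deg E_{a;i,j}^{(r)}=\deg F_{a;i,j}^{(r)}=r-1$, modelled on \eqref{loop filtration}. Reading off the top-degree parts of relations \eqref{coeffi-d-2} and \eqref{p-daeb}--\eqref{p-ff}, one checks that the brackets of the parabolic generators in $\gr\hat Y_\mu(\so)$ are controlled by the bracket of the corresponding elements in $U(\fg)=U(\glmn[x])$ under the assignment \eqref{grading-iso}. Using the Serre-type relations \eqref{coeffi-serre-E}--\eqref{coeffi-superserre-F}, together with the quadratic and cubic relations, one then shows that $\gr\hat Y_\mu(\so)$ is spanned by ordered supermonomials in the parabolic generators. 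The map $\pi$ is filtration-preserving, and its associated graded $\gr\pi$ sends these generators, via the Gauss decomposition \eqref{gaussdec}, precisely to the classes $(-1)^{|i|}e_{i,j}x^{r-1}$ that form a PBW basis of $U(\fg)$. The PBW theorem for $Y_\mu(\so)$ recalled in Section 2.3 then forces $\gr\pi$ to be injective, hence $\pi$ is an isomorphism.

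The main obstacle is that, for the span/ordering argument in $\gr\hat Y_\mu(\so)$ to close, all of \eqref{coeffi-serre-E}--\eqref{coeffi-superserre-F} must really be imposed, not merely the weaker subset used in \cite[Theorem 7.2]{Peng16}. The subtleties are exactly those emphasized in the introduction and visible in the proof of \eqref{superserre-spe} in Lemma \ref{identity-mu}: over $\CC$ (or when $p\neq 2$), the sign identity \eqref{even-odd} forces the even case of the quartic bracket to vanish for free, so one can get away with listing quartic Serre relations only for the index pattern $f_2=h,\;g_1=j$; but in characteristic $p=2$ that argument collapses, so we must include the full family parametrised by all $F_{a+1;g,f}^{(1)}$. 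Similarly, the splitting of \cite[(7.13)--(7.14)]{Peng16} into the four separate coefficient relations \eqref{coeffi-serre-E}--\eqref{coeffi-super-F} is necessary because the "$a=-a\Rightarrow a=0$" trick used in characteristic zero is unavailable.

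Once these additional relations are on the list, the remainder of the argument can be carried out inside $\hat Y_\mu(\so)$ by mimicking Peng's reductions: the shift map $\psi_L$ and swap map $\zeta_{M|N}$ from Section \ref{section:maps} are defined over $\kk$ and behave on parabolic generators as in \eqref{pqele} and Corollary \ref{zetac}, so relations in the ``close-block" cases $n=2,3,4$ (Propositions \ref{n=2}, \ref{3DF}, \ref{3FF}, \ref{3DE}, and the super-Serre step in Lemma \ref{identity-mu}) propagate to general $\mu$ by applying $\psi_L$ and $\zeta_{M|N}$. After translating these into coefficient form via \eqref{gv-gu/u-v}, one obtains precisely \eqref{coeffi-d}--\eqref{coeffi-superserre-F} as a complete set of defining relations, and the PBW comparison finishes the proof.
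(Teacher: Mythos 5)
Your proposal is correct and follows essentially the same route as the paper: surjectivity of the map from the abstract algebra via Theorem \ref{gendef} and Lemma \ref{identity-mu}, then injectivity by passing to the associated graded algebra under the filtration $\deg=r-1$, reducing to a spanning-set/PBW comparison with $U(\glmn[x])$ via \eqref{grading-iso}, with the graded bracket relations (the paper's \eqref{inj}, established through \eqref{inj-1}--\eqref{inj-4} using the Serre-type relations) playing the role you describe. Your remarks on why the full quartic Serre family and the four split relations \eqref{coeffi-serre-E}--\eqref{coeffi-super-F} are needed in characteristic $2$ match the paper's Remark \ref{rk:coeffi}.
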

\begin{remark}\label{rk:coeffi}
\begin{enumerate}
\item Different from \cite[Theorem 7.2]{Peng16},
		we need consider all  $F_{a+1;g,f}^{(1)}$ for the {\it quartic Serre relations} \eqref{coeffi-superserre-E} and \eqref{coeffi-superserre-F}.
		If $p\neq 2$, then the proof in Lemma \ref{identity-mu} implies that the quartic Serre relations
		for $|g|_3+|f|_2=0$ already follow from the quadratic and cubic relations,
		however we cannot derive the quartic Serre relations from others in the case $p=2$. 
		
\item 
	Also relations (7.13)-(7.14) of {\it loc. cit.} are expressed here as the four relations \eqref{coeffi-serre-E}-\eqref{coeffi-super-F}.
	This is essential in characteristic $2$.
	
\item  The factor $\delta_{jk}$ seems to be forgotten in the first term in the right side of (7.5) in \cite[Theorem 7.2]{Peng16}. 
We also note that it is enough to consider the case $f_2=h, g_1=j$ in the relations (7.15)-(7.16) in \cite[Theorem 7.2]{Peng16}.
\end{enumerate}
\end{remark}
\begin{proof}
Now we consider the second part of the proof.
Let $\widehat{Y}_{\mu}$ be the abstract superalgebra generated by elements and relations as in the theorem,
where the parities of the generator are given explicitly by \eqref{pad}--\eqref{paf}.
We may further define all the other $E_{a,b;i,j}^{(r)}$ and $F_{a,b;j,i}^{(r)}$ in $\widehat{Y}_{\mu}$ by setting $E_{a,a+1;i,j}^{(r)}:=E_{a;i,j}^{(r)}$ and $F_{a+1,a;i,j}^{(r)}:=F_{a;i,j}^{(r)}$,
then using the formula \eqref{ter} inductively when $|i-j|>1$.
Let $\theta:\widehat{Y}_{\mu}\rightarrow Y_{\mu}$ be the map sending every element in $\widehat{Y}_{\mu}$ into the element in $Y_{\mu}$ with the same name.
The previous paragraph implies that $\theta$ is a well-defined surjective homomorphism.
Therefore, it remains to prove that $\theta$ is also injective.
The injectivity will be proved  similar to the arguments in the complex field
that showing the image of a spanning set for $\widehat{Y}_{\mu}$ under $\theta$ 
is linearly independent in $Y_{\mu}$.

To do that, it works in related graded superalgebras.
Let $\widehat{Y}^{0}_{\mu}$ (respectively, $\widehat{Y}^{+}_{\mu}$, $\widehat{Y}^{-}_{\mu}$)denote the subalgebras of $\widehat{Y}_{\mu}$ generated by the elements
$D_{a;i,j}^{(r)}$ (respectively, $E_{a,b;i,j}^{(r)}$, $F_{b,a;i,j}^{(r)}$). Deﬁne a ﬁltration on  $\widehat{Y}_{\mu}$  (on $\widehat{Y}^{0}_{\mu}$,
$\widehat{Y}^{+}_{\mu}$ and $\widehat{Y}^{-}_{\mu}$
as well) by 
\begin{equation*}
	\text{deg}(D_{a;i,j}^{(r)})=\text{deg}(E_{a,b;i,j}^{(r)})=\text{deg}(F_{b,a;i,j}^{(r)})=r-1,\qquad\text{for all}\;\; r\ge 1,
\end{equation*}
and denote the associated graded superalgebra by $\gr\widehat{Y}_{\mu}$.
Let $\ovl{F}_{a,b;i,j}^{(r)}$ denote the image of $F_{a,b;i,j}^{(r)}$ in the graded superalgebra $\gr_{r-1}\widehat{Y}_{\mu}^-$. 
Then under the following composition
\[
\gr \widehat{Y}_{\mu}^-\otimes \gr \widehat{Y}_{\mu}^0\otimes \gr \widehat{Y}_{\mu}^+\twoheadrightarrow \gr \widehat{Y}_{\mu} \xrightarrow{\theta} \gr Y_{\mu}\cong U(\gl_{M|N}[x]).
\]
and together with relations \eqref{quasid}--\eqref{quasif},
we have that the images of $\ovl{E}_{a,b;i,j}^{(r)}$, $\ovl{D}_{a;i,j}^{(r)}$ and  $\ovl{F}_{b,a;i,j}^{(r)}$ are $(-1)^{|i|_{a}}e_{n_a+i, n_b+j}x^{r-1}$, $(-1)^{|i|_{a}}e_{n_a+i,n_a+j}x^{r-1}$ and $(-1)^{|i|_{b}}e_{n_b+i, n_a+j}x^{r-1}$,
respectively,
where $n_a:=\mu_1+\mu_2+\ldots+\mu_a$.
Then in order to proving the  injectivity of $\theta$,
it suffices to show that the following relation \eqref{inj} holds in $\gr \widehat{Y}_{\mu}^+$ for all $r,s\geq 0$:
\begin{multline}\label{inj}
	[\ovl{E}_{a,b;i,j}^{(r)},\ovl{E}_{c,d;h,k}^{(s)}]=(-1)^{|j|_b|h|_c}\delta_{b,c}\delta_{h,j}\ovl{E}_{a,d;i,k}^{(r+s-1)}\\
	-(-1)^{|i|_a|j|_b+|i|_a|h|_c+|j|_b|h|_c}\delta_{a,d}\delta_{i,k}\ovl{E}_{c,b;h,j}^{(r+s-1)},
\end{multline}
where $1\leq a< b\leq n$, $1\leq c< d\leq n$ and
$1\leq i\leq \mu_a$, $1\leq j\leq \mu_b$, $1\leq h\leq \mu_c$, $1\leq k\leq\mu_d$.

The following equations \eqref{inj-5}--\eqref{inj-8} were proven over $\mathbb{C}$ in \cite[Lemma 7.3]{Peng16}.
Using \eqref{p-eaea}, \eqref{p-ee}, \eqref{pc-ee} and \eqref{ter},
they can be obtained by exactly the same proof.
\begin{equation}\label{inj-5}
	[\ovl{E}_{a,a+1;i,j}^{(r)},\ovl{E}_{b,b+1;h,k}^{(s)}]=0,\;\text{if}\;|a-b|\ne 1,
\end{equation}
\begin{equation}\label{inj-6}
	[\ovl{E}_{a,a+1;i,j}^{(r+1)},\ovl{E}_{b,b+1;h,k}^{(s)}]=
	[\ovl{E}_{a,a+1;i,j}^{(r)},\ovl{E}_{b,b+1;h,k}^{(s+1)}],\; \text{if}\; |a-b|=1,
\end{equation}
\begin{equation}\label{inj-7}
	\big[\ovl{E}_{a,a+1;i,j}^{(r)},[\ovl{E}_{a,a+1;h,k}^{(s)},\ovl{E}_{b,b+1;f,g}^{(t)}]\big]{=}
	-\big[\ovl{E}_{a,a+1;i,j}^{(s)},[\ovl{E}_{a,a+1;h,k}^{(r)},\ovl{E}_{b,b+1;f,g}^{(t)}]\big],\text{if}\, |a{-}b|{=}1,
\end{equation}
\begin{multline}\label{inj-8}
	\ovl{E}_{a,b;i,j}^{(r)}=(-1)^{|h|_{b-1}}[\ovl{E}_{a,b-1;i,h}^{(r)},\ovl{E}_{b-1,b;h,j}^{(1)}]
	=(-1)^{|k|_{a+1}}[\ovl{E}_{a,a+1;i,k}^{(1)},\ovl{E}_{a+1,b;k,j}^{(r)}],
\end{multline}
for all $b>a+1$ and any $1\leq h\leq\mu_{b-1}$, $1\leq k\leq\mu_{a+1}$.
Then the proof of \eqref{inj} is the same with the proof of \cite[Lemma 7.5]{Peng16} except the following relations  \eqref{inj-1}--\eqref{inj-4}(\,cf.\,\cite[Lemma 7.4,(a)--(d)]{Peng16}):
\begin{equation}\label{inj-1}
	[\ovl{E}_{a,a+2;i,j}^{(r)},\ovl{E}_{a+1,a+2;h,k}^{(s)}]=0,\;\; \text{for all}\;\; 1\leq a\leq n-2,
\end{equation}
\begin{equation}\label{inj-2}
	[\ovl{E}_{a,a+1;i,j}^{(r)},\ovl{E}_{a,a+2;h,k}^{(s)}]=0,\;\; \text{for all}\;\; 1\leq a\leq n-2,
\end{equation}
\begin{equation}\label{inj-3}
	[\ovl{E}_{a,a+2;i,j}^{(r)},\ovl{E}_{a+1,a+3;h,k}^{(s)}]=0,\;\; \text{for all}\;\; 1\leq a\leq n-3,
\end{equation}
\begin{equation}\label{inj-4}
	[\ovl{E}_{a,b;i,j}^{(r)},\ovl{E}_{c,c+1;h,k}^{(s)}]=0,\;\; \text{for all}\;\; 1\leq a\leq c<b\leq n.
\end{equation}
The relation \eqref{inj-8} gives the first identity
\begin{eqnarray*}
	(-1)^{|h|_{a+1}}[\ovl{E}_{a,a+2;i,j}^{(r)},\ovl{E}_{a+1,a+2;h,k}^{(s)}]&=&[[\ovl{E}_{a,a+1;i,h}^{(r)},\ovl{E}_{a+1,a+2;h,j}^{(1)}],\ovl{E}_{a+1,a+2;h,k}^{(s)}]\\
	&\overset{(\ref{inj-7})}{=}&-[[\ovl{E}_{a,a+1;i,h}^{(r)},\ovl{E}_{a+1,a+2;h,j}^{(s)}],\ovl{E}_{a+1,a+2;h,k}^{(1)}]\\
	&\overset{(\ref{inj-6})}{=}&-[[\ovl{E}_{a,a+1;i,h}^{(r+s-1)},\ovl{E}_{a+1,a+2;h,j}^{(1)}],\ovl{E}_{a+1,a+2;h,k}^{(1)}],
\end{eqnarray*}
and this last term is zero by \eqref{coeffi-serre-E},
this proves \eqref{inj-1}.
For  \eqref{inj-2}, the same method in \eqref{inj-1} works, except that we apply \eqref{inj-8} on the term $\ovl{E}_{a,a+2;h,k}^{(s)}$.
By applying \eqref{inj-8} on the left side of \eqref{inj-3}, we obtain
\begin{multline*}
[\ovl{E}_{a,a+2;i,j}^{(r)},\ovl{E}_{a+1,a+3;h,k}^{(s)}]=(-1)^{|s|_{a+1}}(-1)^{|t|_{a+1}}\\\big[\,[\ovl{E}_{a,a+1;i,s}^{(r)},\ovl{E}_{a+1,a+2;s,j}^{(1)}]\,,\,
[\ovl{E}_{a+1,a+2;h,t}^{(1)},\ovl{E}_{a+2,a+3;t,k}^{(r)}]\,\big],
\end{multline*}
which is zero by \eqref{coeffi-superserre-E},
hence \eqref{inj-3} is true.

To establish \eqref{inj-4},
we first consider the case $a=c$.
We proceed by induction on $b-a$.
When $b-a=1$ (resp. $b-a=2$), this follows from \eqref{inj-5} (resp. \eqref{inj-2}).
The super-Jacobi identity in conjunction with \eqref{inj-8} gives
\begin{align*}
&(-1)^{|t|_{b-1}}[\ovl{E}_{a,b;i,j}^{(r)},\ovl{E}_{a,a+1;h,k}^{(s)}]\\
=&\big[[\ovl{E}_{a,b-1;i,t}^{(r)},\ovl{E}_{b-1,b;t,j}^{(1)}],\ovl{E}_{a,a+1;h,k}^{(s)}\big]\\
=&\big[\ovl{E}_{a,b-1;i,t}^{(r)},[\ovl{E}_{b-1,b;t,j}^{(1)},\ovl{E}_{a,a+1;h,k}^{(s)}]\big]\\
&{-}({-}1)^{(|i|_{a}{+}|t|_{b-1})(|j|_{b}{+}|t|_{b-1})}\big[\ovl{E}_{b-1,b;t,j}^{(1)},[\ovl{E}_{a,b-1;i,t}^{(r)},\ovl{E}_{a,a+1;h,k}^{(s)}]\big]\\
=&0
\end{align*}
where the last equality follows from the induction hypothesis and \eqref{inj-5}.
For the case $a<c$ in \eqref{inj-4},
we use  \eqref{inj-8} and super-Jacobi identity reduce the problem to showing 
$$
[\ovl{E}_{a,c+1;i,j}^{(r)},\ovl{E}_{c,c+1;h,k}^{(s)}]=0=[\ovl{E}_{a,c+1;i,j}^{(r)},\ovl{E}_{c,c+2;h,k}^{(s)}].
$$
By using again \eqref{inj-8}--\eqref{inj-3},
this follows from the induction on $c-a$.
\end{proof}
\begin{remark}
We consider the case $a=c$ here in the relation \eqref{inj-4},
which is a bit different from (7.24) in \cite[Lemma 7.4]{Peng16},
but it makes no difference to the proof of Theorem \ref{parabolic}.
\end{remark}

We can also obtian the modular version of the ones in \cite[Corollary 7.9]{Peng16}) by the same argument.
\begin{corollary} We have the PBW bases for the following superalgebras.
	\begin{enumerate}
		\item[(1)] The set of supermonomials in $\{ D_{a;i,j}^{(r)}\}_{1\leq a\leq n, 1\leq i,j\leq \mu_a, r\geq 1}$ taken in a certain fixed order forms a basis for $Y_\mu^0$.
		\item[(2)] The set of supermonomials in $\{ E_{a,b;i,j}^{(r)}\}_{1\leq a<b\leq n, 1\leq i\leq\mu_a,1\leq j\leq\mu_b, r\geq 1}$ taken in a certain fixed order forms a basis for $Y_\mu^+$.
		\item[(3)] The set of supermonomials in $\{ F_{b,a;i,j}^{(r)}\}_{1\leq a<b\leq n, 1\leq i\leq \mu_b,1\leq i\leq\mu_a, r\geq 1}$ taken in a certain fixed order forms a basis for $Y_\mu^-$.
		\item[(4)] The set of supermonomials in the union of the elements listed in (1), (2) and (3) taken in a certain fixed order forms a basis for $Y_{\mu}$.
	\end{enumerate}
\end{corollary}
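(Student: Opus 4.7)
The plan is to exploit the loop filtration on $Y_\mu$ together with the graded isomorphism $\gr Y_{M|N} \cong U(\glMN[x])$ recorded in \eqref{grading-iso}. First I would equip each of $Y_\mu^0, Y_\mu^+, Y_\mu^-$ and $Y_\mu$ itself with the loop filtration, placing $D_{a;i,j}^{(r)}$, $E_{a,b;i,j}^{(r)}$ and $F_{b,a;i,j}^{(r)}$ all in filtration degree $r-1$. Using the quasideterminantal formulas \eqref{quasid}--\eqref{quasif} together with \eqref{Dtident}--\eqref{Jacobi1} and the inductive expressions \eqref{ter}, one checks that the image of each parabolic generator in $\gr_{r-1} Y_\mu \subset \gr Y_{M|N}$ coincides (up to the sign $(-1)^{|i|_a}$ or $(-1)^{|i|_b}$) with the standard basis vector of $\glMN[x]$ already computed in the proof of Theorem~\ref{parabolic}, namely
\begin{align*}
\gr_{r-1} D_{a;i,j}^{(r)} &\mapsto (-1)^{|i|_a} e_{n_a+i,\,n_a+j}x^{r-1},\\
\gr_{r-1} E_{a,b;i,j}^{(r)} &\mapsto (-1)^{|i|_a} e_{n_a+i,\,n_b+j}x^{r-1},\\
\gr_{r-1} F_{b,a;i,j}^{(r)} &\mapsto (-1)^{|i|_b} e_{n_b+i,\,n_a+j}x^{r-1},
\end{align*}
where $n_a = \mu_1+\cdots+\mu_a$.

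Next I would invoke the block-triangular decomposition of $\glMN$ afforded by the composition $\mu$: writing $\glMN = \mathfrak{n}_\mu^-\oplus\mathfrak{l}_\mu\oplus\mathfrak{n}_\mu^+$ where $\mathfrak{l}_\mu = \bigoplus_a \mathfrak{gl}_{p_a|q_a}$ is the block-diagonal Levi and $\mathfrak{n}_\mu^\pm$ are the strict block upper/lower parts, tensoring with $\kk[x]$ preserves the direct sum decomposition of Lie subsuperalgebras. The PBW theorem for Lie superalgebras over $\kk$ then gives a vector superspace factorisation
\[
U(\glMN[x]) \;\cong\; U(\mathfrak{n}_\mu^-[x])\otimes U(\mathfrak{l}_\mu[x])\otimes U(\mathfrak{n}_\mu^+[x]),
\]
each tensor factor possessing its own PBW basis of ordered supermonomials in the obvious spanning set of $e_{i,j}x^r$'s.

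Then I would transfer these bases back to $Y_\mu$. For each of (1)--(3), the associated graded of a prescribed ordered supermonomial in the parabolic generators maps under the composition
\[
\gr Y_\mu^{\pm/0}\;\hookrightarrow\;\gr Y_\mu \;\cong\; U(\glMN[x])
\]
to the corresponding ordered supermonomial in the image basis of $U(\mathfrak{n}_\mu^\pm[x])$ or $U(\mathfrak{l}_\mu[x])$. Linear independence of the parabolic supermonomials in $Y_\mu^{\pm/0}$ follows at once from linear independence of their images in the graded algebra. Spanning is established by the standard filtered-to-graded argument: one uses the relations \eqref{coeffi-d-2}--\eqref{coeffi-superserre-F} (which rewrite any product modulo lower filtration degree as an ordered one, exactly as in the graded algebra) together with induction on filtration degree. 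Part (4) then follows from the triangular factorisation $Y_\mu = Y_\mu^-\cdot Y_\mu^0\cdot Y_\mu^+$ which holds on the associated graded by the tensor factorisation above and lifts by the same filtered argument.

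The main obstacle is verifying that the rewriting procedure using \eqref{coeffi-d-2}--\eqref{coeffi-superserre-F} does not produce obstructions in characteristic $p$, and in particular at $p=2$: one must check that each commutator of generators, when reordered, yields only ordered monomials in strictly lower filtration degree and that the quartic Serre relations \eqref{coeffi-superserre-E}--\eqref{coeffi-superserre-F} do not introduce new leading terms. This is essentially forced by the matching of generator counts with the dimension filtration of $U(\glMN[x])$, just as in \cite[Corollary 7.9]{Peng16}, but the sign factors arising from the $\ZZ_2$-grading on the $F$-side require the extra care already described for the proof of Theorem~\ref{parabolic}.
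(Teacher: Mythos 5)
Your proposal is correct and follows essentially the same route as the paper, which deduces the corollary from the argument already embedded in the proof of Theorem \ref{parabolic}: the loop filtration with $\deg = r-1$, the identification of the leading terms of the parabolic generators with the basis $(-1)^{|i|}e_{i,j}x^{r-1}$ of $\glMN[x]$, and the PBW theorem for $U(\glMN[x])$ together with the block-triangular decomposition (this is the modular version of \cite[Corollary 7.9]{Peng16}). No substantive differences to report.
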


\textbf{Acknowledgment.} 
The author would like to thank Hao Chang for helpful discussions.
This work is supported by the National Natural Science Foundation of China (Grant Nos. 11801394).

\end{document}